\newcommand{\onebar}{\overline{1}}
\newcommand{\twobar}{\overline{2}}
\newcommand{\threebar}{\overline{3}}
\newcommand{\fourbar}{\overline{4}}
\newcommand{\fivebar}{\overline{5}}
\newcommand{\kbar}{\overline{k}}
\newcommand{\dsp}{\displaystyle}
\newcommand{\udots}{{\mathinner{\mskip1mu\raise1pt\vbox{\kern7pt\hbox{.}}\mskip2mu\raise4pt\hbox{.}\mskip2mu\raise7pt\hbox{.}\mskip1mu}}}
\newcommand{\eqnref}[1]{(\ref{#1})}
\newtheorem{definition}{Definition}
\newtheorem{lemma}{Lemma}
\newtheorem{proposition}{Proposition}
\newtheorem{corollary}{Corollary}
\newtheorem{bodytheorem}{Theorem}
\newtheorem{theorem}{Theorem}
\newtheorem{conjecture}{Conjecture}
\newtheorem*{nntheorem}{Theorem}
\theoremstyle{remark}
\newtheorem{remark}{Remark}
\DeclareMathOperator{\gen}{gen}
\DeclareMathOperator{\ord}{ord}
\DeclareMathOperator{\str}{str}
\DeclareMathOperator{\row}{row}
\DeclareMathOperator{\height}{hgt}
\DeclareMathOperator{\wgt}{wt}
\DeclareMathOperator{\barred}{bar}
\DeclareMathOperator{\con}{con}
\DeclareMathOperator{\inv}{inv}
\DeclareMathOperator{\pr}{pr}
\begin{document}

\title{Weyl Group Multiple Dirichlet Series of Type C}

\author{Jennifer Beineke}
\address{Department of Mathematics \\ Western New England College \\ Springfield, MA 01119}
\email{jbeineke@wnec.edu}
\author{Ben Brubaker}
\address{Department of Mathematics \\ MIT \\ Cambridge, MA 02139}
\email{brubaker@math.mit.edu}
\author{Sharon Frechette}
\address{Department of Mathematics and Computer Science \\ College of the Holy Cross \\ Worcester, MA 01610}
\email{sfrechet@mathcs.holycross.edu}

\subjclass[2000]{Primary: 11F68; Secondary: 05E10}

\begin{abstract} We develop the theory of ``Weyl group multiple Dirichlet series'' for root systems of type $C$. For an arbitrary root system of rank $r$ and a positive integer~$n$, these are Dirichlet series in $r$ complex variables with analytic continuation and functional equations isomorphic to the associated Weyl group. In type~$C$, they conjecturally arise from the Fourier-Whittaker coefficients of minimal parabolic Eisenstein series on an $n$-fold metaplectic cover of $SO(2r+1)$. For any odd $n$, we construct an infinite family of Dirichlet series conjecturally satisfying the above analytic properties. The coefficients of these series are exponential sums built from Gelfand-Tsetlin bases of certain highest weight representations. Previous attempts to define such series by Brubaker, Bump, and Friedberg in \cite{WMD2} and \cite{WMD4} required $n$ to be sufficiently large, so that coefficients could be described by Weyl group orbits. We prove that our Dirichlet series equals that of \cite{WMD2} and \cite{WMD4} in the case where both series are defined, and hence inherits the desired analytic properties for $n$ sufficiently large. Moreover our construction is valid even for $n=1$, where we prove our series is a Whittaker coefficient of an Eisenstein series. This requires the Casselman-Shalika formula for unramified principal series and a remarkable deformation of the Weyl character formula of Hamel and King \cite{hamelking}.
\end{abstract}

\maketitle


\section{Introduction}\label{sec:intro}

Let $\Phi$ be a reduced root system of rank $r$. ``Weyl group multiple Dirichlet series'' (associated to $\Phi$) are Dirichlet series in $r$ complex variables which initially converge on a cone in $\mathbb{C}^r$, possess analytic continuation to a meromorphic function on the whole complex space, and satisfy functional equations whose action on $\mathbb{C}^r$ is isomorphic to the Weyl group of $\Phi$.

For various choices of $\Phi$ and a positive integer $n$, infinite families of Weyl group multiple Dirichlet series defined over any number field $F$ containing the $2n^{\text{th}}$ roots of unity were introduced in~{\cite{WMD4}}, \cite{WMD3}, \cite{CG}, and \cite{CGgen}. The coefficients of these Dirichlet series are intimately related to the $n^{\text{th}}$ power reciprocity law in $F$. It is further expected that these families are related to metaplectic Eisenstein series as follows.  If one considers the split, semisimple, simply connected algebraic group $G$ over $F$ whose Langlands $L$-group has root system $\Phi$, then it is conjectured that the families of multiple Dirichlet series associated to $\Phi$ 
are precisely the Fourier-Whittaker coefficients of minimal parabolic Eisenstein series on the $n$-fold metaplectic cover of~$G$. 

In light of this suggested relationship with Eisenstein series, one should be able to provide definitions of multiple Dirichlet series for any reduced root system $\Phi$ and any positive integer $n$ having the desired analytic properties. However a satisfactory theory of such Dirichlet series, linked to metaplectic Eisenstein series, has only recently emerged for type $A$. This paper improves the current theory by  developing some of the corresponding results for type~$C$, suggesting that such representations of Eisenstein series should hold in great generality.   After reviewing several definitions below, the remainder of this introduction will be devoted to a brief account of the previously known results about Weyl group multiple Dirichlet series cited above, followed by a discussion of the main results of this paper.

For any reduced root system $\Phi$ of rank $r$, the basic shape of the Weyl group multiple Dirichlet series can be described uniformly in terms of quantities attached to the root system. Given a number field $F$ containing the $2n^{\text{th}}$ roots of unity and a finite set of places $S$ of $F$ (chosen with certain restrictions described in  Section~\ref{subsec:algprelim}), let $\mathcal{O}_S$ denote the ring of $S$-integers in $F$ and $\mathcal{O}_S^\times$ the units in this ring. Then to any $r$-tuple of non-zero $\mathcal{O}_S$ integers $\mathbf{m} = (m_1,\ldots,m_r)$, we associate a ``Weyl group multiple Dirichlet series'' in $r$ complex variables $\mathbf{s}=(s_1, \ldots, s_r)$ of the form 
\begin{equation} Z_\Psi(s_1, \ldots, s_r;m_1, \ldots, m_r) = Z_\Psi(\mathbf{s};\mathbf{m}) = \sum_{\mathbf{c}=(c_1,\ldots,c_r) \in (\mathcal{O}_S / \mathcal{O}_S^\times)^r} \frac{H^{(n)}(\mathbf{c};\mathbf{m}) \Psi(\mathbf{c})}{|c_1|^{2s_1} \cdots |c_r|^{2s_r}} \label{eq:zshape} \end{equation}
where $H^{(n)}(\mathbf{c};\mathbf{m})$ is an arithmetically interesting function to be defined, $\Psi(\mathbf{c})$ is taken from a finite-dimensional complex vector space defined precisely in Section~\ref{sec:Kubota} and guarantees the numerator of our series is well-defined up to $\mathcal{O}_S^\times$ units, and $|c_i| = |c_i |_S$ denotes the norm of the integer~$c_i$ as a product of local norms in $F_S = \prod_{v \in S} F_v$.

The coefficients $H^{(n)}(\mathbf{c};\mathbf{m})$ are not multiplicative, but nearly so and (as we will demonstrate in  (\ref{cmult}) and (\ref{mmult}) of Section~\ref{subsec:twistedmult}) can nevertheless  be reconstructed from coefficients of the form
\begin{equation} H^{(n)}(p^{\mathbf{k}}; p^{\mathbf{l}}) := H^{(n)}(p^{k_1},\ldots,p^{k_r};p^{l_1},\ldots,p^{l_r}) \label{Hppart} \end{equation}
where $p$ is a fixed prime in $\mathcal{O}_S$ and $k_i = \ord_p(c_i)$, $l_i = \ord_p(m_i)$. 

There are two approaches to defining these prime-power contributions. In \cite{CG} and~\cite{CGgen}, Chinta and Gunnells use a remarkable action of the Weyl group to define the coefficients in (\ref{Hppart}) as an average over elements of the Weyl group for any root system~$\Phi$ and any integer $n \geq 1$, from which functional equations and analytic continuation of the series $Z$ follow. By contrast, in \cite{WMD3}, for $\Phi$ of type $A$ and any $n \geq 1$, Brubaker, Bump, and Friedberg define the prime-power coefficients as a sum over basis vectors in a highest weight representation associated to the fixed $r$-tuple~$\mathbf{l}$ in~(\ref{Hppart}). They subsequently prove functional equations and analytic continuation for the multiple Dirichlet series via intricate combinatorial arguments in \cite{gelbvol} and \cite{gtconj}. More recently, the definition in \cite{WMD3} has been shown to match a simpler definition for the prime power coefficients offered in \cite{WMD4} that applies for any root system $\Phi$, {\bf but only for $\mathbf{n}$ sufficiently large} depending on $\Phi$ and $\mathbf{l}$ (see~(\ref{eqn:stability}) for the precise inequality). It is therefore natural to ask whether a definition for the prime power coefficients (\ref{Hppart}) in the mold of \cite{WMD3} (i.e.\ expressible as a sum over basis vectors of highest weight representations) exists for every root system $\Phi$, so that the resulting multiple Dirichlet series possesses good analytic properties (continuation, functional equations) for any $n \geq 1$, and which matches the definition in \cite{WMD4} for $n$ sufficiently large. 

In the case of $\Phi$ of type $C$, the above discussion leads to the following conjecture:
\begin{conjecture} \label{ZConj} For $\Phi = C_r$ for any $r$ and for $n$ odd, the Dirichlet series $Z_\Psi(\mathbf{s};\mathbf{m})$ described in (\ref{eq:zshape}) above, with coefficients of the form $H^{(n)}(p^{\mathbf{k}}; p^{\mathbf{l}})$ as defined in Section~\ref{sec:ppowercoeffs}, has the following properties:
\begin{enumerate}
 \item[I.] $Z_\Psi(\mathbf{s};\mathbf{m})$ possesses analytic continuation to a meromorphic function on $\mathbb{C}^r$ and satisfies a group of functional equations isomorphic to $W(Sp(2r))$, the Weyl group of $Sp(2r)$, of the form (\ref{simplereflectionfe})
 where the $W$ action on $\mathbb{C}^r$ is as given in~(\ref{saction}).
 \item[II.] $Z_\Psi(\mathbf{s};\mathbf{m})$ is the Whittaker coefficient of a minimal parabolic Eisenstein series on an $n$-fold metaplectic cover of $SO_{2r+1}(F_S)$.
\end{enumerate}
\end{conjecture}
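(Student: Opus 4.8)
The plan is to establish the two assertions of Conjecture~\ref{ZConj} by separate arguments whose ranges of validity mirror the two hypotheses. Part~I will be reduced, for $n$ sufficiently large, to the analytic properties already known for the stable multiple Dirichlet series of \cite{WMD2} and \cite{WMD4}; Part~II will be established directly in the case $n=1$ using the Casselman--Shalika formula for unramified principal series. The remaining cases --- general odd $n$ in Part~I, and $n>1$ in Part~II --- are precisely where the argument does not (yet) reach, which is why the statement is offered as a conjecture.

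\textbf{Part~I.} The first step is a stability comparison. When $n$ satisfies the inequality~\eqref{eqn:stability} relative to $C_r$ and $\mathbf{l}$, I claim the prime-power coefficient $H^{(n)}(p^{\mathbf{k}};p^{\mathbf{l}})$ of Section~\ref{sec:ppowercoeffs}, which is an exponential sum over the Gelfand--Tsetlin basis of the highest weight representation of $Sp(2r)$ attached to $\mathbf{l}$, collapses to a sum over the single Weyl orbit of $\mathbf{l}$ and hence agrees term-by-term with the stable coefficient of \cite{WMD2} and \cite{WMD4}. Concretely, one verifies that in the stable range every Gauss sum attached to a pattern entry is either zero or a unit multiple of a power of $|p|$, and that the only patterns carrying nonzero weight are those ``on the boundary'' corresponding to the Weyl translates $w(\mathbf{l})$; this is a finite combinatorial check once the boxing-and-circling (resonance) bookkeeping for symplectic Gelfand--Tsetlin patterns is set up. Pushing the resulting identity of $p$-parts through the twisted multiplicativity relations~\eqref{cmult} and~\eqref{mmult} shows that $Z_\Psi(\mathbf{s};\mathbf{m})$ coincides with the stable series, and therefore inherits its meromorphic continuation to $\mathbb{C}^r$ and its group of functional equations isomorphic to $W(Sp(2r))$ acting as in~\eqref{saction}. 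To obtain Part~I for every odd $n$ one would instead have to prove the $r$ simple-reflection functional equations~\eqref{simplereflectionfe} directly from the combinatorial definition, adapting to type~$C$ the machinery developed for type~$A$ in \cite{gelbvol} and \cite{gtconj}. This is the main obstacle: in type~$C$ the presence of both short and long roots forces a more intricate interleaving in the Gelfand--Tsetlin patterns, and no analogue of the $A_r$ proof is currently available.

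\textbf{Part~II.} Here I would first dispose of the case $n=1$, in which the power residue symbols are trivial and $H^{(1)}(p^{\mathbf{k}};p^{\mathbf{l}})$ reduces to a purely combinatorial weight on symplectic Gelfand--Tsetlin patterns. The Whittaker coefficient of a minimal parabolic (here non-metaplectic) Eisenstein series on $SO_{2r+1}(F_S)$ is computed place-by-place by the Casselman--Shalika formula: up to the standard normalization it is the value, at the Satake parameter, of the irreducible character $\chi_\lambda$ of the dual group $Sp_{2r}(\mathbb{C})$ with highest weight determined by $\mathbf{l}$. The identity to be proved is thus that the generating function $\sum_{\mathbf{k}} H^{(1)}(p^{\mathbf{k}};p^{\mathbf{l}})\, x^{\mathbf{k}}$ equals this Casselman--Shalika expression; but that is exactly the deformation of Weyl's character and denominator formulas for $Sp(2r)$ established by Hamel and King in \cite{hamelking}, once one identifies their symplectic shifted tableaux with our Gelfand--Tsetlin patterns and matches the tableau weight together with the boxed/circled decorations with the monomial $x^{\mathbf{k}}$. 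Reassembling the local identities through twisted multiplicativity and the known Fourier--Whittaker expansion of the Eisenstein series then yields Part~II for $n=1$. For $n>1$ one would need a metaplectic Casselman--Shalika formula on the $n$-fold cover of $SO_{2r+1}$ together with a metaplectic counterpart of the Hamel--King identity; since neither is presently available in the required generality, Part~II remains conjectural there as well.
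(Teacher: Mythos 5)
Your proposal matches the paper's own treatment: the statement is left as a conjecture, and the two partial results you describe (reduction to the stable series of \cite{WMD2}, \cite{WMD4} for $n$ satisfying the stability bound, yielding Part~I there; and the $n=1$ case of Part~II via the Casselman--Shalika formula combined with the Hamel--King deformation of the Weyl character formula, after identifying symplectic shifted tableaux with the $GT$-patterns) are precisely the paper's Theorems~1 and~2, proved by the same combinatorial classification of stable patterns indexed by Weyl group elements and the same local-to-global passage through twisted multiplicativity. Your assessment of what remains open (general odd $n$, and a metaplectic Casselman--Shalika/Hamel--King analogue) also agrees with the paper's.
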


Note that part II of the conjecture would imply part I according to the general Langlands-Selberg theory of Eisenstein series extended to metaplectic covers as in~\cite{moeglin-wald}. In practice, other methods to prove part I have resulted in sharp estimates for the scattering matrix involved in the functional equations that would be difficult to obtain from the general theory (see, for example,~{\cite{WMD2}}).

In this paper, we make progress toward this general conjecture by proving the following two results. These will be restated more precisely in the later sections once careful definitions have been given.

\begin{theorem} For $n$ sufficiently large (as given in (\ref{eqn:stability})), $Z_\Psi(\mathbf{s};\mathbf{m})$ matches the multiple Dirichlet series defined in~{\cite{WMD4}} for the root system $\Phi=C_r$. Therefore, for such odd~$n$, the multiple Dirichlet series possess the analytic properties cited in part I of Conjecture~\ref{ZConj}.
\end{theorem}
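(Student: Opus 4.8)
The plan is to reduce the claimed identity of the two series to an identity among their prime-power coefficients, and then to establish the latter by a term-by-term comparison that is governed by the vanishing properties of $n$-th order Gauss sums. Both $Z_\Psi(\mathbf{s};\mathbf{m})$ and the series of \cite{WMD4} are, by construction, assembled from their prime-power contributions $H^{(n)}(p^{\mathbf{k}};p^{\mathbf{l}})$ by one and the same twisted multiplicativity recipe, namely the relations (\ref{cmult}) and (\ref{mmult}), which involve only $n$-th power residue symbols and Gauss sums and are insensitive to how the $p$-parts themselves are defined. Hence it suffices to prove that, whenever $n$ satisfies the stability inequality (\ref{eqn:stability}), the coefficient $H^{(n)}(p^{\mathbf{k}};p^{\mathbf{l}})$ defined in Section~\ref{sec:ppowercoeffs} coincides with the ``stable'' prime-power coefficient of \cite{WMD4} for $\Phi = C_r$.

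Next I would unwind the two definitions side by side. Our coefficient is a weighted sum over the Gelfand--Tsetlin patterns (equivalently, over the decorated paths in the associated highest weight representation) whose top row is prescribed by $\mathbf{l}$; each pattern $\mathfrak{T}$ contributes a product, over the entries of $\mathfrak{T}$, of Gauss sums and powers of $q = |p|_S$, with exponents dictated by $\mathbf{k}$ together with the combinatorial decoration of each entry (being ``circled'', ``boxed'', or neither). The \cite{WMD4} coefficient is instead a sum over the Weyl group $W$ of type $C_r$, each $w \in W$ contributing an analogous product of Gauss sums indexed by the positive roots sent to negative roots by $w$. The essential analytic input is the standard evaluation of the prime-power Gauss sums $g^{(n)}$: each such factor either vanishes, or equals an explicit power of $q$, or has an explicit absolute value that is a power of $q$, according to the divisibility of its argument by $n$.

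The stability hypothesis enters at the heart of the argument. Using (\ref{eqn:stability}) I would show that any Gelfand--Tsetlin pattern making a nonzero contribution to $H^{(n)}(p^{\mathbf{k}};p^{\mathbf{l}})$ must be \emph{extremal}: at every interior entry the relevant interleaving inequality is tight, so the pattern is completely determined by a binary choice at each positive root of $C_r$, that is, by an element of $W$. Indeed, a non-extremal entry forces a Gauss-sum factor whose argument, given that $n$ is large relative to the $l_i$, fails to be an $n$-th power residue, so the entire product vanishes; only the ``corner'' patterns survive. I would then invoke the standard Berenstein--Zelevinsky--Littelmann parametrization to identify the extremal patterns with the elements of $W$, and verify that under this bijection the weight $G(\mathfrak{T})$ matches the summand attached to the corresponding $w$ in the \cite{WMD4} expansion. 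Checking that the Gauss-sum exponents agree on the two sides --- carefully tracking the long-root/short-root asymmetry that is special to type $C$, and the hypothesis that $n$ is odd --- is the main obstacle, since it amounts to reconciling two genuinely different bookkeeping conventions.

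Finally, once equality of the two series is established for $n$ sufficiently large and odd, part~I of Conjecture~\ref{ZConj} follows at once for such $n$ from the known analytic properties of the \cite{WMD4} series for type $C_r$: the meromorphic continuation to $\mathbb{C}^r$ and the group of functional equations isomorphic to $W(Sp(2r))$ transfer directly.
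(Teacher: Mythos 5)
Your proposal is correct and follows essentially the same route as the paper: reduce to prime-power coefficients via twisted multiplicativity, use the stability inequality to show that any pattern with a generic (non-extremal) entry contributes a vanishing Gauss sum, put the surviving $2^r r!$ extremal patterns in bijection with $W(C_r)$, and match the Gauss-sum exponents at maximal entries with the $d_{\lambda}(\alpha)$ for $\alpha \in \Phi_w$. The only cosmetic difference is that the paper realizes the bijection explicitly through the weight vector of the pattern rather than citing a general parametrization.
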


\begin{theorem}\label{thm:matchHK}
For $n=1$, $Z_\Psi(\mathbf{s};\mathbf{m})$ is a multiplicative function whose prime power coefficients match those of the Casselman-Shalika formula for $Sp(2r)$, hence agreeing with the minimal parabolic (non-metaplectic) Eisenstein series for $SO_{2r+1}(F_S)$. This gives both parts of Conjecture~\ref{ZConj} for $n=1$.
\end{theorem}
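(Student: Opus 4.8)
The plan is to treat the case $n=1$ by separating it into three pieces that mirror the three clauses of the statement: (i) showing $H^{(1)}(\mathbf{c};\mathbf{m})$ is honestly multiplicative in $\mathbf{c}$ (and in $\mathbf{m}$); (ii) evaluating each prime-power coefficient $H^{(1)}(p^{\mathbf{k}};p^{\mathbf{l}})$ and matching it with the term supplied by the Casselman--Shalika formula; and (iii) assembling the resulting Euler product and recognizing it as a Whittaker coefficient of the minimal parabolic Eisenstein series on $SO_{2r+1}(F_S)$. Granting (iii), part~II of Conjecture~\ref{ZConj} holds for $n=1$, and part~II implies part~I by the Langlands--Selberg theory of Eisenstein series recalled immediately after the statement of the conjecture, so both parts follow at once.

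For (i): the twisted multiplicativity relations \eqnref{cmult} and \eqnref{mmult} of Section~\ref{subsec:twistedmult} express $H^{(n)}$ on arbitrary $(\mathbf{c},\mathbf{m})$ as a product of the prime-power values $H^{(n)}(p^{\mathbf{k}};p^{\mathbf{l}})$ times $n$-th power residue symbols and $n$-th order Gauss sums. When $n=1$ every residue symbol is $1$, so these relations degenerate to ordinary multiplicativity, $H^{(1)}(\mathbf{c};\mathbf{m})=\prod_p H^{(1)}(p^{\mathbf{k}(p)};p^{\mathbf{l}(p)})$ with $\mathbf{k}(p)=(\ord_p c_1,\dots,\ord_p c_r)$ and $\mathbf{l}(p)=(\ord_p m_1,\dots,\ord_p m_r)$; since the space from which $\Psi$ is drawn is one-dimensional for $n=1$, the full coefficient $H^{(1)}(\mathbf{c};\mathbf{m})\Psi(\mathbf{c})$ in \eqnref{eq:zshape} is multiplicative in $\mathbf{c}$. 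I would also record here that the degree-one Gauss sums occurring inside the definition in Section~\ref{sec:ppowercoeffs} collapse to explicit monomials in the residue-field cardinality $q=|p|$, which is what makes (ii) tractable.

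For (ii), the heart of the argument: by construction (Section~\ref{sec:ppowercoeffs}), $H^{(1)}(p^{\mathbf{k}};p^{\mathbf{l}})$ is a sum over Gelfand--Tsetlin-type patterns whose top row is fixed by $\mathbf{l}$ --- equivalently by the dominant weight $\lambda=\lambda(\mathbf{l})$ after a $\rho$-shift --- with the exponent vector $\mathbf{k}$ selecting patterns of a prescribed weight and each pattern weighted by a product of (now monomial) Gauss-sum factors. Thus, up to an overall power of $q$, $\sum_{\mathbf{k}} H^{(1)}(p^{\mathbf{k}};p^{\mathbf{l}})\,\mathbf{z}^{-\mathbf{k}}$ is a weight-generating function over this pattern set. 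The Hamel--King deformation of the Weyl character formula for the symplectic group \cite{hamelking} is precisely a $t$-deformed determinantal identity equating such a generating function, specialized at the value of the deformation parameter corresponding to $n=1$, with the product $\bigl(\prod_{\alpha>0}(1-q^{-1}\mathbf{z}^{\alpha})\bigr)\,\chi_\lambda(\mathbf{z})$, where $\chi_\lambda$ is the irreducible character of $Sp(2r,\mathbb{C})$ of highest weight $\lambda$. This is exactly the Casselman--Shalika expression for the spherical Whittaker function of the unramified principal series of $SO_{2r+1}(F_v)$, whose $L$-group is $Sp(2r,\mathbb{C})$. Concretely the tasks are: unwind the Section~\ref{sec:ppowercoeffs} definition at $n=1$ to make the pattern sum explicit; match the pattern set and its statistics (weight, together with the boxed/circled decorations that control the Gauss-sum exponents) with the symplectic shifted tableaux of \cite{hamelking}; and invoke the Hamel--King identity in that normalization to collapse the sum to $\chi_\lambda$.

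The main obstacle is this last matching step. One must verify that at $n=1$ the several ingredients of the prime-power definition --- the $\Gamma$/$\Delta$ ``preference'' rules, the boxing/circling of entries, and the evaluation of each Gauss sum --- degenerate to exactly the monomial statistics appearing in the Hamel--King tableau model, with careful bookkeeping of the $\rho$-shift relating $\mathbf{l}$ to $\lambda$, of the Weyl chamber in which $\mathbf{k}$ lives, and of the extremal configurations where a pattern entry touches the boundary of its allowed range (these are precisely the patterns for which Gauss sums can vanish for general $n$ but are benign at $n=1$). Once this dictionary is pinned down, the collapse to $\chi_\lambda(\mathbf{z})$, and hence to the Casselman--Shalika value, is immediate, and the multiplicativity of (i) upgrades this local identity to the global statement that $Z_\Psi(\mathbf{s};\mathbf{m})$ is the asserted Whittaker coefficient.
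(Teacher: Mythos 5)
Your proposal is correct and follows essentially the same route as the paper: reduce to an Euler product via ordinary multiplicativity at $n=1$, collapse the degree-one Gauss sums to monomials in $q$, and match the resulting weight-generating function over $GT$-patterns with the Hamel--King identity at $t=-1/q$, whose other side is the deformed denominator times $sp_\lambda$ and hence the Casselman--Shalika expression for $SO_{2r+1}$ with dual group $Sp(2r)$. The ``main obstacle'' you flag --- translating the pattern statistics into the tableau statistics of \cite{hamelking} --- is exactly what the paper's Lemma~\ref{lem:HKstats} (namely $\gen(P)=\str(S_P)-r$ and $\max(P)=\height(S_P)+r(r+1)/2$) together with the power-of-$q$ bookkeeping of Lemma~\ref{lem:p^ksum} supplies, so your plan is the paper's proof in outline.
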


These theorems are the symplectic analogue of those proven for type $A$ in~{\cite{WMD3}} and~{\cite{WMD4}}. Theorem 2 is proved using a combinatorial identity of Hamel and King~{\cite{hamelking}}. Theorem~1, our main result,  also has a combinatorial proof using rather subtle connections between the Weyl group and Gelfand-Tsetlin patterns (henceforth $GT$-patterns), which parametrize basis vectors for highest weight representations of $Sp(2r,\mathbb{C})$, the Langlands dual group of $SO(2r+1)$.

\begin{remark} The restriction that $n$ must be odd is natural in light of earlier work by Savin \cite{Savin} showing that the structure of the Iwahori-Hecke algebra depends on the parity of the metaplectic cover and by Bump, Friedberg, and Ginzburg \cite{BFG} on conjectural dual groups for metaplectic covers. Indeed, though the construction of the Dirichlet series we propose in Section~3 makes sense for any $n$, attempts to prove functional equations for $n$ even and $\mathbf{m}$ fixed using the techniques of \cite{edinburgh} suggest the coefficients have the wrong shape. In view of this evidence, we expect a similar combinatorial definition to hold for $n$ even, but making use of the highest weight representation theory for $SO(2r+1, \mathbb{C})$.
\end{remark}

As noted above, the analogue of Conjecture~\ref{ZConj} is known for type $A$ for any $n \geq 1$.  A combinatorial proof of the type $A$ analogue of part I using only rank 1 Eisenstein series is completed in \cite{gelbvol} and \cite{gtconj}. The proof there makes critical use of the outer automorphism of the Dynkin diagram for type $A$, so a simple mimicking of the proof techniques to obtain results for type $C$ is not possible. However, given any fixed $\mathbf{m}$ and fixed $n$, one can in practice confirm the functional equations with  a finite amount of checking (see, for example, \cite{edinburgh} for the details of this argument in a small rank example). 

The type $A$ analogue of part II of Conjecture 1 is proved in \cite{eisencon} by computing the Fourier-Whittaker coefficients of Eisenstein series directly by inducing from successive maximal parabolics. The result is essentially a complicated recursion involving exponential sums and lower rank Eisenstein series. Then one checks the definition given in~{\cite{WMD3}} satisfies the recursion. We expect a similar approach may be possible in type $C$ as well, and this will be the subject of future work. Note that such an approach depends critically on having a proposed solution to satisfy the recursion, so the methods of this paper are a necessary first step.

The precise definition of the prime-power coefficients~(\ref{Hppart}) for type $C$ are somewhat complicated, so we have chosen to postpone the definition until Section 3.  As alluded to earlier, coefficients $H^{(n)}(p^{\mathbf{k}};p^{\mathbf{l}}))$ will be described in terms of basis vectors for highest weight representations of $Sp(2r)$ with highest weight corresponding to~$\mathbf{l}$. As noted in Remark 2 of Section 3, the definition produces Gauss sums which encode subtle information about Kashiwara raising/lowering operators in the crystal graph associated to the highest weight representation. As such, this paper offers the first evidence that mysterious connections between metaplectic Eisenstein series and crystal bases may hold in much greater generality, persisting beyond the type $A$ theory in \cite{gtconj}, \cite{eisencon} and \cite{WMD3}. These connections may not be properly understood until a general solution to our problem for all root systems $\Phi$ is obtained.

Finally, the results of this paper give infinite classes of Dirichlet series with analytic continuation. One can then use standard Tauberian techniques to extract mean-value estimates for families of number-theoretic quantities appearing in the numerator of the series (or the numerator of polar residues of the series). For the $n$-cover of $A_r$, this method yielded the mean-value results of \cite{Chinta} ($r=5, n=2$) and \cite{bba3} ($r=3, n=3$). It would be interesting to explore similar results in type $C$ (remembering that our conjecture may be verified for any given example with $n$, $r$, and $\mathbf{m}$ fixed with only a finite amount of checking, as sketched in \cite{edinburgh}).

We are grateful to Dan Bump, Gautam Chinta, Sol Friedberg, and Paul Gunnells for sharing drafts of manuscripts in progress and for numerous illuminating mathematical conversations. This work was partially supported by NSF grants DMS-0502730 (Beineke), and DMS-0702438 and DMS-0652529 (Brubaker).


\section{Definition of the Multiple Dirichlet Series}\label{sec:defineMDS}

In this section, we present general notation for root systems and the corresponding Weyl group multiple Dirichlet series.

\subsection{Root Systems}

Let $\Phi$ be a reduced root system contained in $V$, a real vector space of dimension $r$. The dual vector space $V^\vee$ contains a root system $\Phi^\vee$ in bijection with $\Phi$, where the bijection switches long and short roots. If we write the dual pairing
\begin{equation} V \times V^\vee \longrightarrow \mathbb{R}: \quad (x,y) \mapsto B(x,y), \label{bilinearform} \end{equation}
then
$ B(\alpha, \alpha^{\vee}) = 2 $. Moreover, the simple reflection $\sigma_\alpha : V \rightarrow V$ corresponding to $\alpha$ is given by
$$ \sigma_\alpha(x) = x - B(x, \alpha^\vee) \alpha. $$
Note that $\sigma_\alpha$ preserves $\Phi$. Similarly, we define $\sigma_{\alpha^\vee}: V^\vee \rightarrow V^\vee$ by $\sigma_{\alpha^\vee}(x) = x - B(\alpha,x) \alpha^\vee$ with $\sigma_{\alpha^\vee}(\Phi^\vee)=\Phi^\vee$.

For our purposes, without loss of generality, we may take $\Phi$ to be irreducible (i.e., there do not exist orthogonal subspaces $\Phi_1, \Phi_2$ with $\Phi_1 \cup \Phi_2 = \Phi$). Then set $\langle \cdot, \cdot \rangle$ to be the Euclidean inner product on $V$ and $||\alpha|| = \sqrt{\langle \alpha, \alpha \rangle}$ the Euclidean norm, where we normalize so that $2\langle \alpha, \beta \rangle$ and $|| \alpha ||^2$ are integral for all $\alpha, \beta \in \Phi$. With this notation,
\begin{equation} \sigma_\alpha(\beta) = \beta - \frac{2 \langle \beta, \alpha \rangle}{\langle \alpha, \alpha \rangle} \alpha \quad \text{for any $\alpha, \beta \in \Phi$} \label{eq:sigmaaction} \end{equation}

We partition $\Phi$ into positive roots $\Phi^+$ and negative roots $\Phi^-$ and let $\Delta = \{ \alpha_1, \ldots, \alpha_r \} \subset \Phi^+$ denote the subset of simple positive roots. Further, we will denote the fundamental dominant weights by $\epsilon_i$ for $i=1, \ldots, r$ satisfying
\begin{equation} \frac{2 \langle \epsilon_i, \alpha_j \rangle}{\langle \alpha_j, \alpha_j \rangle} = \delta_{ij} \quad \text{$\delta_{ij}$ : Kronecker delta.} \label{fundwts} \end{equation}
Any dominant weight $\lambda$ is expressible in terms of the $\epsilon_i$, and a distinguished role in the theory is played by the Weyl vector $\rho$, defined by
\begin{equation} \rho = \frac{1}{2} \sum_{\alpha \in \Phi^+} \alpha = \sum_{i=1}^r \epsilon_i. \label{eqn:weylvect} \end{equation}

\subsection{Algebraic Preliminaries}\label{subsec:algprelim}
In keeping with the foundations used in previous papers (cf. \cite{WMD2} and \cite{WMD4}) on Weyl group multiple Dirichlet series, we choose to define our Dirichlet series as indexed by integers rather than ideals. By using this approach, the coefficients of the Dirichlet series will closely resemble classical exponential sums, but some care needs to be taken to ensure the resulting series remains well-defined up to units. 

To this end, we require the following definitions. Given a fixed positive odd integer $n$, let $F$ be a number field containing the $2n^{\text{th}}$ roots of unity, and let $S$ be a finite set of places containing all ramified places over $\mathbb{Q}$, all archimedean places, and enough additional places so that the ring of $S$-integers $\mathcal{O}_S$ is a principal ideal domain. Recall that the $\mathcal{O}_S$ integers are defined as
$$ \mathcal{O}_S = \left\{ a \in F \; \mid \; a \in \mathcal{O}_v \; \forall v \, \not\in S \right\}, $$
and can be embedded diagonally in
$$ F_S = \prod_{v \in S} F_v. $$
There exists a pairing 
$$ ( \cdot , \cdot )_S \, : \, F_S^\times \times F_S^\times \longrightarrow \mu_n \text{ defined by } (a,b)_S = \prod_{v \in S} (a,b)_v, $$
where the $(a,b)_v$ are local Hilbert symbols associated to $n$ and $v$.

Further, to any $a \in \mathcal{O}_S$ and any ideal $\mathfrak{b} \subseteq \mathcal{O}_S$, we may associate the $n$th power residue symbol $\left( \frac{a}{\mathfrak{b}} \right)_n$ as follows. For prime ideals $\mathfrak{p}$, the expression $\left( \frac{a}{\mathfrak{p}} \right)_n$ is the unique $n^{\text{th}}$ root of unity satisfying the congruence
$$ \left( \frac{a}{\mathfrak{p}} \right)_n \equiv a^{(N(\mathfrak{p})-1)/n} \; (\text{mod } \mathfrak{p}). $$
We then extend the symbol to arbitrary ideals $\mathfrak{b}$ by multiplicativity, with the convention that the symbol is 0 whenever $a$ and $\mathfrak{b}$ are not relatively prime. Since $\mathcal{O}_S$ is a principal ideal domain by assumption, we will write
$$ \left( \frac{a}{b} \right)_n = \left( \frac{a}{\mathfrak{b}} \right)_n \quad \text{for $\mathfrak{b} = b\mathcal{O}_S$} $$
and often drop the subscript $n$ on the symbol when the power is understood from context.

Then if $a,b$ are coprime integers in $\mathcal{O}_S$, we have the $n$th power reciprocity law (cf. \cite{Neukirch}, Thm. 6.8.3)
\begin{equation} \left( \frac{a}{b} \right) = (b,a)_S \left( \frac{b}{a} \right) \label{powerrecip} \end{equation}
which, in particular, implies that if $\epsilon \in \mathcal{O}_S^\times$ and $b \in \mathcal{O}_S$, then
$$ \left( \frac{\epsilon}{b} \right) = (b,\epsilon)_S. $$

Finally, for a positive integer $t$ and $a,c \in \mathcal{O}_S$ with $c \ne 0$, we define the Gauss sum $g_t(a,c)$ as follows. First, choose a non-trivial additive character $\psi$ of $F_S$ trivial on the $\mathcal{O}_S$ integers (cf. \cite{BB:kubota} for details). Then the $n^{\text{th}}$-power Gauss sum is given by
\begin{equation} g_t(a,c) = \sum_{d \; \text{mod } c} \left( \frac{d}{c} \right)_n^t \psi \left( \frac{ad}{c} \right), \label{gausssum} \end{equation} 
where we have suppressed the dependence on $n$ in the notation on the left. The Gauss sum $g_t$ is not multiplicative, but rather satisfies
\begin{equation} g_t(a, c c') = \left( \frac{c}{c'} \right)_n^t \left( \frac{c'}{c} \right)_n^t g_t(a,c) g_t(a,c') \label{gausssummult} \end{equation}
for any relatively prime pair $c,c' \in \mathcal{O}_S$.

\subsection{Kubota's Rank 1 Dirichlet series}\label{sec:Kubota}

Many of the definitions for Weyl group multiple Dirichlet series are natural extensions of those from the rank 1 case, so we begin with a brief description of these.

A subgroup $\Omega \subset F_S^\times$ is said to be {\em isotropic} if $(a,b)_S=1$ for all $a,b \in \Omega$. In particular, $\Omega = \mathcal{O}_S (F_S^\times)^n$ is isotropic (where $(F_S^\times)^n$ denotes the $n^{\text{th}}$ powers in $F_S^\times$). Let $\mathcal{M}_t(\Omega)$ be the space of functions $\Psi : F_S^\times \longrightarrow \mathbb{C}$ that satisfy the transformation property
\begin{equation} \label{rankonepsi}
 \Psi(\epsilon c) = (c,\epsilon)_S^{-t} \Psi(c) \quad \text{for any $\epsilon \in \Omega, c \in F_S^\times$.}
\end{equation}
 
For $\Psi \in \mathcal{M}_t(\Omega)$, consider the following generalization of Kubota's Dirichlet series: 
\begin{equation}\label{eqn:KubotaDS}
 \mathcal{D}_t(s, \Psi, a) = \sum_{0 \neq c \in \mathcal{O}_s/\mathcal{O}^\times_s} \frac{g_t(a,c) \Psi(c)}{ |c|^{2s}}.
\end{equation}
Here $|c|$ is the order of $\mathcal{O}_S / c\mathcal{O}_S$, $g_t(a,c)$ is as in (\ref{gausssum}) and the term $g_t(a,c) \Psi(c) |c|^{-2s}$ is independent of the choice of representative $c$, modulo $S$-units.  Standard estimates for Gauss sums show that the series is convergent if $\mathfrak{R}(s) > \frac{3}{4}$.
Our functional equation computations will hinge on the functional equation for this Kubota Dirichlet series.  Before stating this result, we require some additional notation.  Let 
\begin{equation}\label{eqn:gammafactors}
 \mathbf{G}_n(s) = (2\pi)^{-2(n-1)s} n^{2ns} \prod_{j=1}^{n-2} \Gamma \left( 2s-1+\frac{j}{n} \right). 
 \end{equation}
In view of the multiplication formula for the Gamma function, we may also write
\[
 \mathbf{G}_n(s) = (2\pi)^{-(n-1)(2s-1)} \frac{\Gamma(n(2s-1))}{\Gamma(2s-1)}.
\]
Let 
\begin{equation}\label{eqn:fullkubotaDS}
 \mathcal{D}_t^\ast(s,\Psi,a) = \mathbf{G}_m(s)^{[F:\mathbb{Q}]/2} \zeta_F(2ms-m+1) \mathcal{D}_t(s,\Psi,a),
\end{equation}
where $m = n/\gcd(n,t)$, $\frac{1}{2}[F:\mathbb{Q}]$ is the number of archimedean places of the totally complex field $F$, and $\zeta_F$ is the Dedekind zeta function of $F$.  

If $v \in S_{fin}$ let $q_v$ denote the cardinality of the residue class field $\mathcal{O}_v/\mathcal{P}_v$, where $\mathcal{O}_v$ is the local ring in $F_v$ and $\mathcal{P}_v$ is its prime ideal.  By an \emph{$S$-Dirichlet polynomial} we mean a polynomial in $q_v^{-s}$ as $v$ runs through the finite number of places in $S_{fin}$.  If $\Psi \in \mathcal{M}_t(\Omega)$ and $\eta \in F_S^\times$, denote
\begin{equation}
 \widetilde{\Psi}_{\eta}(c) = (\eta,c)_S \, \Psi(c^{-1} \eta^{-1}).
\end{equation}
Then we have the following result (Theorem 1 in \cite{WMD4}), which follows from the work of Brubaker and Bump \cite{BB:kubota}.
\begin{nntheorem}[Brubaker-Bump] \label{thm:kubotaFE}
 Let $\Psi \in \mathcal{M}_t(\Omega)$ and $a \in \mathcal{O}_S$.  Let $m = n/\gcd(n,t)$.  Then $\mathcal{D}_t^\ast(s,\Psi,a)$ has meromorphic continuation to all $s$, analytic except possibly at $s = \frac{1}{2} \pm \frac{1}{2m}$, where it might have simple poles.  There exist $S$-Dirichlet polynomials $P_\eta^t(s)$ depending only on the image of $\eta$ in $F_S^\times / (F_S^{\times})^n$ such that
\begin{equation}\label{eqn:kubotaFE}
 \mathcal{D}_t^\ast(s,\Psi,a) = |a|^{1-2s} \sum_{\eta \in F_S^\times / (F_S^{\times})^n} P_{a\eta}^t(s) \mathcal{D}_t^\ast(1-s,\widetilde{\Psi}_\eta, a).
\end{equation}
\end{nntheorem}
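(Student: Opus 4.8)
\emph{Proof sketch (following Brubaker--Bump \cite{BB:kubota}).}
The plan is to realize $\mathcal{D}_t(s,\Psi,a)$ as an $a$-th Fourier--Whittaker coefficient of a metaplectic Eisenstein series on the $n$-fold cover $\widetilde{SL}_2(F_S)$ and to deduce the asserted functional equation, continuation, and poles from the corresponding properties of that Eisenstein series. The space $\mathcal{M}_t(\Omega)$ of functions satisfying (\ref{rankonepsi}) is precisely the space of ``metaplectic vectors'' on which the cover of the diagonal torus acts through the $t$-th power of the Hilbert symbol, so an element $\Psi$ determines a section $f_{s,\Psi}$ of the degenerate principal series $I(s)=\mathrm{Ind}_{\widetilde{B}}^{\widetilde{SL}_2}(\,\cdot\,)$ attached to $F_S$. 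Here the Kubota $2$-cocycle splits over $SL_2(\mathcal{O}_S)$ by $n$-th power reciprocity (\ref{powerrecip}), so the Eisenstein series $E(g,s,\Psi)=\sum_{\gamma\in(\widetilde{B}\cap\Gamma)\backslash\Gamma} f_{s,\Psi}(\gamma g)$, $\Gamma$ the image of $SL_2(\mathcal{O}_S)$, is well defined and converges for $\mathfrak{R}(s)>\tfrac34$ by the same Weyl-type estimate for $n$-th power Gauss sums that underlies the convergence of (\ref{eqn:KubotaDS}).

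The computation then proceeds in four steps. First, expand $E(g,s,\Psi)$ along the Bruhat decomposition and integrate over the unipotent radical against the character $\psi(a\,\cdot\,)$: the non-trivial cell contributes a sum over $c\in\mathcal{O}_S/\mathcal{O}_S^\times$ in which the unipotent integral produces exactly the Gauss sum $g_t(a,c)$ of (\ref{gausssum}) while the transformation law (\ref{rankonepsi}) produces the well-defined weight $\Psi(c)$, so the global integral equals $\mathcal{D}_t(s,\Psi,a)$ up to elementary factors. Second, separate the places: at $v\notin S$ the local computation, using the twisted multiplicativity (\ref{gausssummult}) of Gauss sums together with (\ref{powerrecip}), resums to the Euler product of $\zeta_F(2ms-m+1)$ with $m=n/\gcd(n,t)$; at the archimedean places one obtains $[F:\mathbb{Q}]/2$ copies of a local zeta integral which the Gamma multiplication formula identifies with $\mathbf{G}_m(s)$ of (\ref{eqn:gammafactors}); at $v\in S_{\mathrm{fin}}$ one obtains local Whittaker integrals that are polynomials in $q_v^{-s}$. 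Assembling these yields the completed series $\mathcal{D}_t^\ast(s,\Psi,a)$ of (\ref{eqn:fullkubotaDS}). Third, invoke the functional equation of the Eisenstein series: the standard intertwining operator $M(s)\colon I(s)\to I(1-s)$ gives $E(g,s,\Psi)=E(g,1-s,M(s)\Psi)$ after normalization, and $M(s)$ acts on the finite-dimensional space of metaplectic vectors through a ``scattering matrix'' whose entries, computed via local Hilbert-symbol and Kubota-cocycle identities, are exactly the $S$-Dirichlet polynomials $P_\eta^t(s)$ indexed by $\eta\in F_S^\times/(F_S^\times)^n$, with $M(s)\Psi$ expressed through the involutes $\widetilde{\Psi}_\eta$; taking $a$-th Fourier coefficients of both sides and tracking the resulting factor $|a|^{1-2s}$ yields (\ref{eqn:kubotaFE}). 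Fourth, meromorphic continuation to all $s$ and the restriction of the possible poles to $s=\tfrac12\pm\tfrac{1}{2m}$ follow from Langlands' continuation of Eisenstein series specialized to this rank-one situation, together with the explicit pole of the normalizing factor $\zeta_F(2ms-m+1)$ at $s=\tfrac12$ and its image under $s\mapsto 1-s$.

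The main obstacle is the explicit local analysis at the places $v\in S$: one must show that the entries of the scattering matrix depend only on the class of $\eta$ in $F_S^\times/(F_S^\times)^n$ and are honest Dirichlet polynomials in the $q_v^{-s}$, which requires a careful unwinding of the Kubota $2$-cocycle and repeated use of Hilbert-symbol bilinearity and the Gauss-sum identity (\ref{gausssummult}); a subsidiary technical point is matching the archimedean local integrals to the precise shape of $\mathbf{G}_m(s)$ via the Gamma multiplication formula. By contrast, the global unwinding of the Fourier coefficient and the formal structure of the functional equation are routine once the local statements are in hand, and the convergence and pole assertions follow directly from known Gauss-sum bounds and the analytic theory of Eisenstein series. (An alternative, more hands-on route avoids the adelic Eisenstein series entirely and instead establishes (\ref{eqn:kubotaFE}) by a contour-shift/Hecke-trick argument directly on the Dirichlet series; it relies on the same local Gauss-sum and reciprocity inputs and we do not pursue it here.)
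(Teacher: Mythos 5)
Your sketch follows essentially the same route the paper relies on: the paper gives no proof of this theorem, deferring to Theorem 1 of \cite{WMD4}, which rests on the metaplectic-$SL_2$ Eisenstein-series argument of Brubaker--Bump \cite{BB:kubota} for $t=1$ and the reduction of general $t$ in Proposition 5.2 of \cite{WMD2} --- exactly the Whittaker-coefficient computation and scattering-matrix analysis you outline. One small correction: the possible simple poles at $s=\tfrac12\pm\tfrac{1}{2m}$ do not come from the normalizing factor $\zeta_F(2ms-m+1)$, whose pole sits at $s=\tfrac12$ (a different point); they arise from the residual (theta) poles of the metaplectic Eisenstein series itself, i.e., from the poles of its constant term/intertwining operator.
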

This result, based on ideas of Kubota \cite{kubota2}, relies on the theory of Eisenstein series.  The case $t=1$ is handled in \cite{BB:kubota}; the general case follows as discussed in the proof of Proposition 5.2 of \cite{WMD2}.  Notably, the factor $|a|^{1-2s}$ is independent of the value of $t$.
\smallbreak

\subsection{The form of higher rank multiple Dirichlet series}\label{subsec:twistedmult}

We now begin explicitly defining the multiple Dirichlet series, retaining our previous notation. By analogy with the rank 1 definition in (\ref{rankonepsi}), given an isotropic subgroup $\Omega$, let $\mathcal{M}(\Omega^r)$ be the space of functions $\Psi : (F_S^\times)^r \longrightarrow \mathbb{C}$ that satisfy the transformation property
\begin{equation} \label{generalpsi} \Psi(\mathbf{\epsilon} \mathbf{c}) = \left( \prod_{i=1}^r (\epsilon_i,c_i)_S^{||\alpha_i||^2} \prod_{i < j} (\epsilon_i, c_j)_S^{2\langle \alpha_i, \alpha_j \rangle} \right) \Psi(\mathbf{c}) \end{equation}
for all $\mathbf{\epsilon} = (\epsilon_1, \ldots, \epsilon_r) \in \Omega^r$ and all $\mathbf{c} = (c_1, \ldots, c_r) \in (F_{S}^{\times})^r$.

Recall from the introduction that, given a reduced root system $\Phi$ of fixed rank $r$, an integer $n \geq 1$, $\mathbf{m} \in \mathcal{O}_S^r$, and $\Psi \in \mathcal{M}(\Omega^r)$, we consider a function of $r$ complex variables $\mathbf{s} = (s_1, \ldots, s_r) \in \mathbb{C}^r$ of the form
$$ Z_\Psi(s_1, \ldots, s_r;m_1, \ldots, m_r) = Z_\Psi(\mathbf{s};\mathbf{m}) = \sum_{\mathbf{c}=(c_1,\ldots,c_r) \in (\mathcal{O}_S / \mathcal{O}_S^\times)^r} \frac{H^{(n)}(\mathbf{c};\mathbf{m}) \Psi(\mathbf{c})}{|c_1|^{2s_1} \cdots |c_r|^{2s_r}}. $$

 The function $H^{(n)}(\mathbf{c}; \mathbf{m})$ carries the main arithmetic content. It is not defined as a multiplicative function, but rather a ``twisted multiplicative'' function. For us, this means
that for $S$-integer vectors $\mathbf{c}, \mathbf{c}' \in (\mathcal{O}_S / \mathcal{O}_S^\times)^r$ with $\gcd (c_1 \cdots c_r, c_1' \cdots c_r')=1$, 
\begin{equation} H^{(n)}(c_1c_1', \ldots, c_rc_r'; \mathbf{m}) = \mu(\mathbf{c}, \mathbf{c}') H^{(n)}(\mathbf{c}; \mathbf{m}) H^{(n)}(\mathbf{c}'; \mathbf{m}) \label{cmult} \end{equation}
where $ \mu(\mathbf{c}, \mathbf{c}')$ is an $n^{\text{th}}$ root of unity depending on $\mathbf{c}, \mathbf{c}'$. It is given precisely by
\begin{equation}  \mu(\mathbf{c}, \mathbf{c}') = \prod_{i=1}^r \left( \frac{c_i}{c_i'} \right)_n^{|| \alpha_i ||^2} \left( \frac{c_i'}{c_i} \right)_n^{|| \alpha_i ||^2} \prod_{i < j}  \left( \frac{c_i}{c_j'} \right)_n^{2 \langle \alpha_i, \alpha_j \rangle} \left( \frac{c_i'}{c_j} \right)_n^{2 \langle \alpha_i, \alpha_j \rangle} \label{mudef} \end{equation}
where $\left( \frac{\cdot}{\cdot} \right)_n$ is the $n^{\text{th}}$ power residue symbol defined in Section~\ref{subsec:algprelim}. Note that in the special case $\Phi=A_1$, the twisted multiplicativity in (\ref{cmult}) and (\ref{mudef}) agrees with the identity for Gauss sums in (\ref{gausssummult}) in accordance with the numerator for the rank one case given in (\ref{eqn:KubotaDS}). 

\begin{remark}
We often think of twisted multiplicativity as the appropriate generalization of multiplicativity for the metaplectic group. In particular, for $n=1$ we reduce to the usual multiplicativity on relatively prime coefficients. Moreover, many of the global properties of the Dirichlet series follow (upon careful analysis of the twisted multiplicativity and associated Hilbert symbols) from local properties, e.g. functional equations as in~{\cite{WMD2}} and~{\cite{WMD4}}. For more on this perspective, see~{\cite{friedberg}}.
\end{remark}

Note that the transformation property of functions in $\mathcal{M}(\Omega^r)$ in (\ref{generalpsi}) above is motivated by the identity
$$ H^{(n)}(\mathbf{\epsilon} \mathbf{c}; \mathbf{m}) \Psi(\mathbf{\epsilon} \mathbf{c}) = H^{(n)}(\mathbf{c}; \mathbf{m}) \Psi(\mathbf{c}) \quad \text{for all} \; \mathbf{\epsilon} \in \mathcal{O}_S^r, \mathbf{c},  \mathbf{m} \in (F_S^\times)^r. $$
The proof can be verified using the $n^{\text{th}}$ power reciprocity law from Section~\ref{subsec:algprelim}.

Now, given any $\mathbf{m}, \mathbf{m'}, \mathbf{c} \in \mathcal{O}_S^r$ with $\gcd (m_1' \cdots m_r', c_1 \cdots c_r) =1$, we let
\begin{equation} H^{(n)}(\mathbf{c}; m_1 m_1', \ldots, m_r m_r') = \prod_{i=1}^r \left( \frac{m_i'}{c_i} \right)_n^{-|| \alpha_i ||^2} H^{(n)}(\mathbf{c}; \mathbf{m}). \label{mmult} \end{equation}
The definitions  in (\ref{cmult}) and (\ref{mmult}) imply that it is enough to specify the coefficients $H^{(n)}(p^{k_1}, \ldots, p^{k_r}; p^{l_1}, \cdots, p^{l_r})$ for any fixed prime $p$ with $l_i = \ord_p(m_i)$ in order to completely determine $H^{(n)}(\mathbf{c}; \mathbf{m})$ for any pair of $S$-integer vectors $\mathbf{m}$ and $\mathbf{c}$. These prime-power coefficients are described in terms of data from highest-weight representations associated to $(l_1, \cdots, l_r)$ and will be given precisely in Section~\ref{sec:ppowercoeffs}.

\subsection{Weyl group actions}\label{sec:weylact}

In order to precisely state a functional equation for the Weyl group multiple Dirichlet series, we require an action of the Weyl group $W$ of $\Phi$ on the complex parameters $(s_1, \ldots, s_r)$. This arises from the linear action of $W$, realized as the group generated by the simple reflections $\sigma_{\alpha^\vee}$, on $V^\vee$. From the perspective of Dirichlet series, it is more natural to consider this action shifted by $\rho^\vee$, half the sum of the positive co-roots. Then each $w \in W$ induces a transformation $V_\mathbb{C}^\vee = V^\vee \otimes \mathbb{C} \rightarrow V_\mathbb{C}^\vee$ (still denoted by~$w$) if we require that
$$ B(w \alpha, w(\mathbf{s}) - \frac{1}{2} \rho^\vee) = B(\alpha, \mathbf{s} - \frac{1}{2} \rho^\vee). $$

We introduce coordinates on $V_\mathbb{C}^\vee$ using simple roots $\Delta = \{ \alpha_1, \ldots, \alpha_r \}$ as follows. Define an isomorphism $V_\mathbb{C}^\vee \rightarrow \mathbb{C}^r$ by
\begin{equation} \mathbf{s} \mapsto (s_1, s_2, \ldots, s_r) \quad s_i = B(\alpha_i, \mathbf{s}). \label{impaction} \end{equation}
This action allows us to identify $V_\mathbb{C}^\vee$ with $\mathbb{C}^r$, and so the complex variables $s_i$ that appear in the definition of the multiple Dirichlet series may be regarded as coordinates in either space. It is convenient to describe this action more explicitly in terms of the~$s_i$ and it suffices to consider simple reflections which generate $W$. Using  the action of the simple reflection $\sigma_{\alpha_i}$ on the root system $\Phi$ given in~\eqref{eq:sigmaaction}
 in conjunction with (\ref{impaction}) above gives the following:

\begin{proposition} The action of $\sigma_{\alpha_i}$ on $\mathbf{s} = (s_1, \ldots, s_r)$ defined implicitly in (\ref{impaction}) is given by 
\begin{equation} s_j \mapsto s_j - \frac{2 \langle \alpha_j, \alpha_i \rangle}{\langle \alpha_i, \alpha_i \rangle} \left( s_i - \frac{1}{2} \right) \quad j=1,\ldots,r. \label{saction} \end{equation}
In particular, $\sigma_{\alpha_i} : s_i \mapsto 1-s_i$.
\end{proposition}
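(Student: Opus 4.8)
The plan is to unwind the implicit definition of the shifted $W$-action in~\eqref{impaction} and reduce everything to the linear reflection formula~\eqref{eq:sigmaaction} together with the standard fact that $B(\alpha_i,\rho^\vee)=1$ for each simple root $\alpha_i$. In the coordinates $s_j=B(\alpha_j,\mathbf{s})$, computing $\sigma_{\alpha_i}(\mathbf{s})$ amounts to evaluating $B(\alpha_j,\sigma_{\alpha_i}(\mathbf{s}))$ for each $j$, so this is the quantity to get a handle on.

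First I would exploit the fact that $w:=\sigma_{\alpha_i}$ is an involution. Substituting $\alpha=w\alpha_j$ into the characterizing relation $B(w\alpha,\, w(\mathbf{s})-\tfrac12\rho^\vee)=B(\alpha,\,\mathbf{s}-\tfrac12\rho^\vee)$ and using $w(w\alpha_j)=\alpha_j$ gives
$$B(\alpha_j,\, w(\mathbf{s})) = B(w\alpha_j,\, \mathbf{s}-\tfrac12\rho^\vee) + B(\alpha_j,\tfrac12\rho^\vee).$$
Next I would expand $w\alpha_j=\sigma_{\alpha_i}(\alpha_j)=\alpha_j-\frac{2\langle\alpha_j,\alpha_i\rangle}{\langle\alpha_i,\alpha_i\rangle}\alpha_i$ via~\eqref{eq:sigmaaction} and use bilinearity of $B$ to rewrite the right-hand side as
$$B(\alpha_j,\mathbf{s}) - \frac{2\langle\alpha_j,\alpha_i\rangle}{\langle\alpha_i,\alpha_i\rangle}\Bigl(B(\alpha_i,\mathbf{s}) - B(\alpha_i,\tfrac12\rho^\vee)\Bigr).$$
Since $\rho^\vee$ is the sum of the fundamental dominant coweights (the coroot analogue of~\eqref{eqn:weylvect}), we have $B(\alpha_i,\rho^\vee)=1$, hence $B(\alpha_i,\tfrac12\rho^\vee)=\tfrac12$; substituting $s_j=B(\alpha_j,\mathbf{s})$ and $s_i=B(\alpha_i,\mathbf{s})$ yields exactly~\eqref{saction}. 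The final assertion then follows by setting $j=i$, where $\frac{2\langle\alpha_i,\alpha_i\rangle}{\langle\alpha_i,\alpha_i\rangle}=2$, so that $s_i\mapsto s_i-2(s_i-\tfrac12)=1-s_i$.

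This is a routine computation rather than a genuine obstacle; the only two points requiring care are the contragredient bookkeeping — the defining relation pairs the $V$-side action $w\alpha$ against the $V^\vee$-side action $w(\mathbf{s})$, which is precisely what makes the substitution $\alpha=w\alpha_j$ legitimate and forces the $\rho^\vee$-shift to propagate correctly — and the normalization $B(\alpha_i,\rho^\vee)=1$, which one can either quote or derive from $\sigma_{\alpha_i^\vee}(\rho^\vee)=\rho^\vee-\alpha_i^\vee$ just as in the classical identity $\langle\rho,\alpha_i^\vee\rangle=1$.
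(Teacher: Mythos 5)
Your proposal is correct and follows essentially the same route as the paper, which states the proposition as an immediate consequence of combining the reflection formula \eqref{eq:sigmaaction} with the implicit definition \eqref{impaction} and omits the computation entirely; your write-up simply supplies the details (the substitution $\alpha = w\alpha_j$ using that $\sigma_{\alpha_i}$ is an involution, bilinearity of $B$, and $B(\alpha_i,\rho^\vee)=1$), all of which are accurate.
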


\subsection{Normalizing factors and functional equations}

The multiple Dirichlet series must also be normalized using Gamma and zeta factors in order to state precise functional equations. Let
$$ n(\alpha) = \frac{n}{\gcd(n,||\alpha||^2)}, \quad \alpha \in \Phi^+. $$
For example, if $\Phi = C_r$ and we normalize short roots to have length 1, this implies that $n(\alpha) = n$ unless $\alpha$ is a long root and $n$ even (in which case $n(\alpha)=n/2$). By analogy with the zeta factor appearing in (\ref{eqn:fullkubotaDS}), for any $\alpha \in \Phi^+$, let
$$ \zeta_\alpha(\mathbf{s}) = \zeta \left( 1+2n(\alpha) B(\alpha, \mathbf{s} - \frac{1}{2} \rho^\vee) \right) $$
where $\zeta$ is the Dedekind zeta function attached to the number field $F$. Further, for~$\mathbf{G}_n(s)$ as in~\eqref{eqn:gammafactors}, we may define
\begin{equation} \mathbf{G}_\alpha(\mathbf{s}) = \mathbf{G}_{n(\alpha)} \left( \frac{1}{2} + B(\alpha, \mathbf{s} - \frac{1}{2} \rho^\vee) \right). \label{eqn:normfactors}
\end{equation}
Then for any $\mathbf{m} \in \mathcal{O}_S^r$, the normalized multiple Dirichlet series is given by
\begin{equation} Z_\Psi^\ast(\mathbf{s}; \mathbf{m}) = \left[ \prod_{\alpha \in \Phi^+} \mathbf{G}_\alpha(\mathbf{s}) \zeta_\alpha(\mathbf{s}) \right] Z_\Psi(\mathbf{s}, \mathbf{m}). \label{zstardefined} \end{equation}

By considering the product over all positive roots, we guarantee that the other zeta and Gamma factors are permuted for each simple reflection $\sigma_i \in W$, and hence for all elements of the Weyl group.  

Given any fixed $n$, $\mathbf{m}$ and root system $\Phi$, we seek to exhibit a definition for $H^{(n)}(\mathbf{c};\mathbf{m})$ (or equivalently, given twisted multiplicativity, a definition of $H$ at prime-power coefficients) such that $Z_\Psi^\ast(\mathbf{s}; \mathbf{m})$ satisfies functional equations of the form:
\begin{equation} \label{simplereflectionfe}
 Z_\Psi^\ast(\mathbf{s}; \mathbf{m}) = |m_i|^{1-2s_i} Z_{\sigma_i \Psi}^\ast(\sigma_i \mathbf{s}; \mathbf{m})
\end{equation}
for all simple reflections $\sigma_i \in W$. Here, $\sigma_i \mathbf{s}$ is as in (\ref{saction}) and the function $\sigma_i \Psi$, which essentially keeps track of the rather complicated scattering matrix in this functional equation, is defined as in (37) of \cite{WMD4}. As noted in Section 7 of \cite{WMD4}, given functional equations of this type, one can obtain analytic continuation to a meromorphic function of $\mathbb{C}^r$ with an explicit description of polar hyperplanes.


\section{Definition of the Prime-Power Coefficients}\label{sec:ppowercoeffs}

In this section, we give a precise definition of the coefficients $H^{(n)}(p^{\mathbf{k}};p^{\mathbf{l}})$ needed to complete the description of the multiple Dirichlet series for root systems of type~$C_r$ and $n$ odd. All the previous definitions are stated in sufficient generality for application to multiple Dirichlet series for any reduced root system $\Phi$ and any positive integer $n$. Only the prime power coefficients require specialization to our particular root system $\Phi = C_r$, though this remains somewhat complicated. We summarize the definition at the end of the section.

The vector $\mathbf{l} = (l_1, l_2, \ldots, l_r)$ appearing in $H^{(n)}(p^{\mathbf{k}};p^{\mathbf{l}})$ can be associated to a dominant integral element for $Sp_{2r}(\mathbb{C})$ of the form
\begin{equation}\label{eqn:lambda}
\lambda = (l_1 + l_2 + \cdots + l_r, \ldots, l_1+l_2, l_1). 
\end{equation}
The contributions to $H^{(n)}(p^{\mathbf{k}};p^{\mathbf{l}})$ will then be parametrized by basis vectors of the highest weight representation of highest weight $\lambda + \rho$, where $\rho$ is the Weyl vector for $C_r$ defined in~\eqref{eqn:weylvect}, so that
\begin{equation} \lambda + \rho = (l_1 + l_2 + \cdots + l_r+r, \ldots, l_1+l_2+2, l_1+1) =: (L_r, \cdots, L_1). \label{GTtoprow} \end{equation}
In~{\cite{WMD3}}, prime-power coefficients for multiple Dirichlet series of type $A$ were attached to Gelfand-Tsetlin patterns, which parametrize highest weight vectors for~$SL_{r+1}(\mathbb{C})$ (cf.~{\cite{GT}}). Here, we use an analogous basis for the symplectic group, according to branching rules given by Zhelobenko in~{\cite{zhelobenko}}. We will continue to  refer to the objects comprising this basis as Gelfand-Tsetlin patterns, or $GT$-patterns.

More precisely, a $GT$-pattern $P$ has the form
\begin{equation}\label{eqn:GTpattern}
P = \begin{array}{cccccccc}
 a_{0,1} & & a_{0,2} & & \cdots & & a_{0,r} & \\
 & b_{1,1} & & b_{1,2} & \cdots & b_{1,r-1} & & b_{1,r} \\
 & & a_{1,2} & & \cdots & & a_{1,r} & \\
 & & & \ddots & & \ddots & & \vdots \\
 & & & & & & a_{r-1,r} & \\
 & & & & & & & b_{r,r}
\end{array}
\end{equation}
where the $a_{i,j}, b_{i,j}$ are non-negative integers and the rows of the pattern interleave. That is, for all $a_{i,j}, b_{i,j}$ in the pattern $P$ above,
$$ \min(a_{i-1,j}, a_{i,j}) \geq b_{i,j} \geq \max(a_{i-1,j+1},a_{i,j+1}) $$
and 
$$ \min(b_{i+1,j-1}, b_{i,j-1}) \geq a_{i,j} \geq \max(b_{i+1,j},b_{i,j}). $$
The set of all patterns with top row $(a_{0,1}, \ldots, a_{0,r})=(L_r, \ldots, L_1)$ form a basis for the highest weight representation with highest weight $\lambda + \rho$. Hence, we will consider $GT$-patterns with top row $(L_r, \ldots, L_1)$ as in (\ref{GTtoprow}), and refer to this set of patterns as $GT(\lambda+\rho)$.

The contributions to each $H^{(n)}(p^{\mathbf{k}};p^{\mathbf{l}})$ with both $\mathbf{k}$ and $\mathbf{l}$ fixed come from a single weight space corresponding to $\mathbf{k} = (k_1, \ldots, k_r)$ in the highest weight representation~$\lambda+\rho$ corresponding to $\mathbf{l}$. We first describe how to associate a weight vector to each $GT$-pattern. Let
\begin{equation}\label{def:si}
s_a(i) := \sum_{m=i+1}^r a_{i,m} \qquad \mbox{and} \qquad s_b(i) := \sum_{m=i}^r b_{i,m}
\end{equation}
be the row sums for the respective rows of $a$'s and $b$'s in $P$. (Here we understand that $s_a(r) = 0$ corresponds to an empty sum.)  Then define the weight vector $\wgt(P) = (\wgt_1(P),\ldots,\wgt_r(P))$ by
\begin{equation}
\wgt_i = \wgt_i(P) = s_a(r-i) - 2 s_b(r+1-i) + s_a(r+1-i), \quad i=1,\ldots,r. \label{eqn:defwgt}
\end{equation}
Note that as the weights are generated in turn, we begin at the bottom of the pattern $P$ and work our way up to the top.
Our prime power coefficients will then be supported at $(p^{k_1}, \ldots, p^{k_r})$ with
\begin{equation}
k_i = \sum_{j=i}^r \left( \wgt_j + L_j \right), \quad i=1,\ldots,r-1, \quad k_r = \wgt_r + L_r, \label{eqn:kweights}
\end{equation}
so that in particular, the $k_i$ are non-negative integers.

In terms of the $GT$-pattern $P$, the reader may check that we have 
$$ k(P) = (k_1(P), k_2(P), \ldots, k_r(P)) \quad \text{with} $$ 
\begin{align}
\begin{split}\label{def:ki}
 k_1(P) & =  s_a(0) - \sum_{m=1}^r \left( s_b(m) - s_a(m) \right) \\
 k_i(P) & =  s_a(0) - 2  \sum_{m=1}^{r+1-i} \left( s_b(m) - s_a(m) \right)  - s_a(r+1-i) + \sum_{m=1}^{r+1-i} a_{0,m}
\end{split}
\end{align}
for $1<i \leq r$. Then we define
\begin{equation}\label{eqn:Hprimedef}
H^{(n)}(p^{\mathbf{k}};p^{\mathbf{l}}) = H^{(n)}(p^{k_1}, \ldots, p^{k_r};p^{l_1}, \ldots, p^{l_r}) = \sum_{\substack{ P \in GT(\lambda+\rho) \\ k(P)=(k_1,\ldots,k_r) }} G(P)  
\end{equation}
where the sum is over all $GT$-patterns $P$ with top row $(L_r, \ldots, L_1)$ as in (\ref{GTtoprow}) satisfying the condition $\mathbf{k}(P)=(k_1,\ldots,k_r)$ and $G(P)$ is a weighting function whose definition depends on the following elementary quantities.

To each pattern $P$, define the corresponding data:
\begin{equation}\label{def:uv} v_{i,j} = \sum_{m=i}^j \left( a_{i-1,m} - b_{i,m} \right), \qquad w_{i,j} = \sum_{m=j}^r \left( a_{i,m} - b_{i,m} \right), \qquad  u_{i,j} = v_{i,r} + w_{i,j}, \end{equation}
where we understand the entries $a_{i,j}$ or $b_{i,j}$ to be 0 if they do not appear in the pattern~$P$. 

\begin{remark} The integers $u_{i,j}$ and $v_{i,j}$ have representation theoretic meaning in terms of Kashiwara raising and lowering operators in the crystal graph associated to the highest weight representation of highest weight $\lambda+\rho$ for $U_q(\mathfrak{sp}(2r))$, the quantized universal enveloping algebra of the Lie algebra $\mathfrak{sp}(2r)$. See Littelmann~\cite{littelmann} for details, particularly Corollary 2 of Section 6. See also \cite{gtconj} and \cite{eisencon} for a more complete description in crystal language, focusing mainly on type $A$. We find this interpretation quite striking in light of the connection to Whittaker models on the metaplectic group. Ultimately, this can be seen as another instance of connections between quantum groups and principal series representations in the spirit of \cite{lusztig}. This is not a perspective we emphasize here, but this potentially exciting connection to past work is worth further exploration.
\end{remark}

To each entry $b_{i,j}$ in $P$, we associate
\begin{equation} \gamma_b(i,j) = \begin{cases} g_{\delta_{jr}+1}(p^{v_{i,j} - 1}, p^{v_{i,j}}) & \mbox{if } b_{i,j} = a_{i-1,j+1}, \\
                                   \phi(p^{v_{i,j}}) & \mbox{if } a_{i-1,j} < b_{i,j} < a_{i-1,j+1}, \, \, \, n | v_{i,j} \cdot (\delta_{jr}+1), \\
				   0 & \mbox{if } a_{i-1,j} < b_{i,j} < a_{i-1,j+1}, \, \, \, n \nmid v_{i,j} \cdot (\delta_{jr}+1), \\
				   q^{v_{i,j}} & \mbox{if } b_{i,j} = a_{i-1,j},
                   \end{cases} \label{gammablong} \end{equation}
where $g_t(p^\alpha, p^\beta)$ is an $n^\text{th}$-power Gauss sum as in \eqnref{gausssum}, $\phi(p^a)$ denotes the Euler phi function for $\mathcal{O}_S / p^a \mathcal{O}_S$, $q = | \mathcal{O}_S / p \mathcal{O}_S |$ and $\delta_{jr} $ is the Kronecker delta function.
We note these cases may be somewhat reduced, using elementary properties of Gauss sums, to
\begin{equation} \gamma_b(i,j) = \begin{cases}  q^{v_{i,j}} & \mbox{if } b_{i,j} = a_{i-1,j}, \\
	g_{\delta_{jr}+1}(p^{v_{i,j}+b_{i,j}-a_{i-1,j+1}- 1}, p^{v_{i,j}}) & \text{else.}
	\end{cases} \label{gammabshort} \end{equation}
To each entry $a_{i,j}$ in $P$, with $i \geq 1$, we may associate
\begin{equation} \gamma_a(i,j) = \begin{cases} g_1(p^{u_{i,j} - 1}, p^{u_{i,j}}) & \mbox{if } a_{i,j} = b_{i,j-1}, \\
	\phi(p^{u_{i,j}}) & \mbox{if }  b_{i,j} < a_{i,j} < b_{i,j-1}, \, \, \, n | u_{i,j}, \\
	0 & \mbox{if } b_{i,j} < a_{i,j} < b_{i,j-1}, \, \, \, n \nmid u_{i,j}, \\
	q^{u_{i,j}} & \mbox{if } a_{i,j} = b_{i,j},
	\end{cases} \label{gammaalong} \end{equation}
which can similarly be compacted to
\begin{equation} \gamma_a(i,j) = \begin{cases} q^{u_{i,j}} & \mbox{if } a_{i,j} = b_{i,j}, \\
	g_1(p^{u_{i,j}-a_{i,j}+b_{i,j-1}- 1}, p^{u_{i,j}}) & \text{else.}
	\end{cases} \label{gammaashort} \end{equation}
We introduce terminology to describe relationships between elements in a pattern $P$:
\begin{definition} \label{def:minmaxgen} A $GT$-pattern $P$ is {\em minimal at $b_{i,j}$}
 if $b_{i,j} = a_{i-1,j}$.  It is {\em maximal at $b_{i,j}$} if $1 \leq j < r$ and $b_{i,j} = a_{i-1,j+1}$, or if $b_{i,r} = 0$. If none of these equalities holds, we say $P$ is {\em generic at} $b_{i,j}$.  
 
 Likewise, $P$ is {\em minimal at $a_{i,j}$}
 if $a_{i,j} = b_{i,j}$, and {\em maximal at $a_{i,j}$} if $a_{i,j} = b_{i,j-1}$.  If neither equality holds, we say $P$ is {\em generic at} $a_{i,j}$.
\end{definition}

\begin{definition}\label{def:strict}  A $GT$-pattern $P$ is {\em strict} if its entries are strictly decreasing across each horizontal row.
\end{definition}

We then define the coefficients
\begin{equation}\label{eqn:defineGP}
 G(P) = \begin{cases}
 \prod_{1 \leq i \leq j \leq r} \gamma_a(i,j) \gamma_b(i,j) & \text{if $P$ is strict,}\\
 0 & \text{otherwise},
 \end{cases}
\end{equation}
where we again understand  $\gamma_a(r,r)$ to be 1 since $a_{r,r}$ is not in the pattern $P$. Combining these definitions, we obtain a definition of the prime-power coefficients in the series as summarized below.

\begin{definition}[Summary of definitions for $H$] \label{def:ppart} Given any prime $p$, define
\begin{equation} H^{(n)}(p^\mathbf{k}; p^\mathbf{l}) = \sum_{\substack{ P \in GT(\lambda+\rho) \\ k(P) = \mathbf{k} }} G(P) \label{summarydef} \end{equation}
where the sum is over all $GT$-patterns with top row corresponding to $\lambda+\rho$ and row sums fixed according to (\ref{def:ki}), and $G(P)$ is given as in (\ref{eqn:defineGP}) above with $\gamma_a(i,j)$ and $\gamma_b(i,j)$ of (\ref{gammaashort}) and (\ref{gammabshort}), resp., defined in terms of $v_{i,j}$ and $u_{i,j}$ in (\ref{def:uv}).
\end{definition}

Note that in the right-hand side of \eqnref{summarydef}, we have suppressed the dependence on~$n$.  This is appropriate since the expressions in (\ref{gammabshort}) and (\ref{gammaashort}) are given in terms of Gauss sums, which are defined uniformly for all $n$.

The coefficients $H^{(n)}(\mathbf{c};\mathbf{m})$ appearing in (\ref{eq:zshape}) are now implicitly defined by (\ref{summarydef}) together with the twisted multiplicativity given in (\ref{cmult}) and (\ref{mmult}). The resulting multiple Dirichlet series $Z_\Psi(\mathbf{s};\mathbf{m})$ is initially absolutely convergent for $\Re(s_i)$ sufficiently large. Indeed, if a pattern $P$ has weight $\mathbf{k} = (k_1, \ldots, k_r)$, then $|G(P)| < q^{k_1+ \cdots + k_r}$ and the number of patterns in a given weight space is bounded as a function of $\mathbf{m}$ corresponding to the highest weight vector.


\section{Comparison in the Stable Case}\label{sec:stablecase}

We now compare our multiple Dirichlet series, having $p^{\text{th}}$-power coefficients as defined in (\ref{summarydef}), with the multiple Dirichlet series defined for arbitrary root systems $\Phi$ in~\cite{WMD4}, when $n$ is sufficiently large. In this section, we determine the necessary lower bound on $n$ explicitly, according to a {\em stability assumption} introduced in \cite{WMD2}.  With this lower bound, we can then prove that for $n$ odd, the two prescriptions agree.  

To this end, let $\mathbf{m} = (m_1, \ldots, m_r)$ be a fixed $r$-tuple of non-zero $\mathcal{O}_S$ integers. To any fixed prime $p$ in $\mathcal{O}_S$, set $l_i = \ord_p(m_i)$ for $i=1,\ldots,r$. Then define $\lambda_p$ as in \eqnref{eqn:lambda}, so that in terms of the fundamental dominant weights $\epsilon_i$, we have $$\lambda_p =  \sum_{i=1}^r l_i \epsilon_i.$$ 
Then we may define the function $d_{\lambda_p}$ on the set of positive roots $\Phi^+$ by 
\begin{equation}\label{def:dlambda}
 d_{\lambda_p}(\alpha) = \frac{2 \left<\lambda_p + \rho, \alpha \right>}{\left< \alpha, \alpha \right>}.
\end{equation}
For ease of computation in the results that follow, we choose to normalize the inner product $\langle \ , \ \rangle$ so that $\|\alpha\|^2 = \langle \alpha , \alpha \rangle = 1$ if $\alpha$ is a short root, while $\|\alpha\|^2 = 2$ if $\alpha$ is a long root. 

\noindent {\bf Stability Assumption.}\,\, {\em   Let $\alpha = \sum_{i=1}^r t_i \alpha_i$ be the largest positive root in the partial ordering for $\Phi$.  Then for every prime $p$, we require that the positive integer $n$  satisfies}
\begin{equation}\label{eqn:stability}
n \geq \gcd(n, \|{\alpha}\|^2) \cdot d_{\lambda_p}(\alpha) = \gcd(n, \|{\alpha}\|^2) \cdot \sum_{i=1}^r t_i (l_i + 1).
\end{equation}
When the Stability Assumption holds, we say we are ``in the stable case." Note this is well-defined since $l_i=0$ for all $i=1,\ldots,r$ for all but finitely many primes $p$. For the remainder of this section, we work with a fixed prime $p$, and so write $\lambda$ in place of $\lambda_p$ when no confusion can arise.

For $\Phi = C_r$, we let $\alpha_1$ denote the long simple root, so that the largest positive root is $\alpha_1 + \sum_{i=2}^r 2 \alpha_i$.  Moreover if $n$ is odd, the condition (\ref{eqn:stability}) becomes 
\begin{equation}\label{eqn:nstable}
n \geq l_1 + 1 + \sum_{i=2}^r 2(l_i + 1).
\end{equation}

For any $w \in W(\Phi)$, define the set $\Phi_w = \{ \alpha \in \Phi^+ \mid w(\alpha) \in \Phi^-\}$.  Following~{\cite{WMD2}} and \cite{WMD4}, the $p^{th}$-power coefficients of the multiple Dirichlet series  in the stable case are given by
\begin{equation}\label{eqn:ppartofH}
H_{st}^{(n)}(p^{k_1}, \cdots, p^{k_r}; p^{l_1}, \cdots, p^{l_r}) = \prod_{\alpha \in \Phi_w} g_{\|\alpha\|^2}(p^{d_{\lambda}(\alpha)-1}, \, p^{d_{\lambda}(\alpha)}),
\end{equation}
where the dependence on $n$ occurs only in the $n$th power residue symbol in the Gauss sums. In \cite{WMD4}, it was established that the above definition $H_{st}^{(n)}(p^{\mathbf{k}};p^{\mathbf{l}})$ produces a Weyl group multiple Dirichlet series $Z^\ast(\mathbf{s},\mathbf{m})$ with analytic continuation and functional equations (of the form in Conjecture 1) provided the Stability Assumption on $n$ holds. The proof works for any reduced root system $\Phi$. In this section, we demonstrate that our definition $H^{(n)}(p^\mathbf{k};p^{\mathbf{l}})$ in terms of $GT$-patterns as in (\ref{summarydef}) matches that in (\ref{eqn:ppartofH}) for $n$ satisfying the Stability Assumption as in (\ref{eqn:stability}).

\begin{definition}
 Given a $GT$-pattern $P \in GT(\lambda+\rho)$, and $G(P)$ defined as in (\ref{eqn:defineGP}), then $P$ is said to be {\bf stable} if $G(P) \ne 0$ for some (odd) $n$ satisfying the Stability Assumption as in (\ref{eqn:stability}). 
\end{definition}

As we will see in the following result, if $P$ is stable for one such $n$, then $G(P)$ is non-zero for all $n$ satisfying (\ref{eqn:stability}). These are the relevant patterns we must consider in establishing the equivalence of the two definitions $H_{st}^{(n)}(p^{\mathbf{k}};p^{\mathbf{l}})$ and $H^{(n)}(p^{\mathbf{k}};p^{\mathbf{l}})$ in the stable case, and we begin by characterizing all such patterns.

\begin{proposition}
 A pattern $P \in GT(\lambda+\rho)$ is stable if and only if, in each pair of rows in $P$ with index $i$ (that is, pattern entries $\{ b_{i,j},a_{i,j} \}_{j=i}^r$), the ordered set
 $$ \left\{ b_{i,i}, b_{i,i+1}, \ldots, b_{i,r}, a_{i,r}, a_{i,r-1}, \ldots, a_{i,i+1} \right\} $$
 has an initial string in which all elements are minimal (as in Definition~\ref{def:minmaxgen}) and all remaining elements are maximal.     
\end{proposition}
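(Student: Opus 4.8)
The plan is to analyze the structure of $G(P)$ directly in terms of the defining data $v_{i,j}$ and $u_{i,j}$, translate the Stability Assumption into an arithmetic constraint on these quantities, and then characterize exactly which patterns can avoid the vanishing cases in the definitions (\ref{gammabshort}) and (\ref{gammaashort}). First I would recall that $G(P)$ factors as a product over rows, and within each row-pair indexed by $i$ the entries $b_{i,i}, b_{i,i+1}, \ldots, b_{i,r}$ followed by $a_{i,r}, \ldots, a_{i,i+1}$ are constrained by the interleaving inequalities. The key observation is that the quantities $v_{i,j}$ and $u_{i,j}$ are partial sums: $v_{i,j}$ accumulates the differences $a_{i-1,m} - b_{i,m}$ as $m$ runs from $i$ up to $j$, while $w_{i,j}$ (hence $u_{i,j}$) accumulates $a_{i,m} - b_{i,m}$ from $j$ down to $r$. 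So reading the ordered set from left to right, these exponents are monotically built up, and the total exponent attached to a full row is controlled by $d_\lambda(\alpha)$ for the appropriate positive roots $\alpha$.

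Next I would examine when a factor in $G(P)$ is nonzero. From (\ref{gammabshort}) and (\ref{gammaashort}), a generic entry (neither minimal nor maximal) contributes a factor of the form $\phi(p^{v_{i,j}})$ or $\phi(p^{u_{i,j}})$ multiplied by a divisibility condition $n \mid v_{i,j}\cdot(\delta_{jr}+1)$ or $n \mid u_{i,j}$; a minimal entry contributes a power of $q$; a maximal entry contributes a Gauss sum $g_{\bullet}(p^{e-1}, p^e)$ with $e = v_{i,j}$ or $u_{i,j}$, and such a Gauss sum is nonzero for all $n$ (it is essentially a Ramanujan-type sum at a prime power with the "$-1$" exponent). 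Under the Stability Assumption, the exponents $v_{i,j}, u_{i,j}$ that arise are bounded strictly below $n$ (roughly, by $d_\lambda(\alpha) \le$ the bound in (\ref{eqn:stability}) divided by $\gcd(n,\|\alpha\|^2)$), so the divisibility condition $n \mid v_{i,j}$ can hold \emph{only} when $v_{i,j} = 0$. But $v_{i,j} = 0$ forces the entry to be minimal, not generic — contradiction. Hence in the stable case no generic entries are allowed: every entry is either minimal or maximal. This already shows $G(P) \ne 0$ for one stable $n$ iff for all, since the nonvanishing of minimal/maximal contributions is $n$-independent.

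Then I would pin down the ordering constraint: within a row, once you pass from a minimal entry to a maximal entry, can you go back? Here I would use the interleaving inequalities together with the monotonicity of the partial sums $v_{i,j}$ and $w_{i,j}$. The claim is that a maximal entry $b_{i,j} = a_{i-1,j+1}$ (or $b_{i,r} = 0$) "resets" or constrains the subsequent partial sums in a way that, combined with $a_{i,j} \ge \max(b_{i+1,j}, b_{i,j})$ type inequalities, forces all later entries in the left-to-right order to remain maximal — otherwise one would produce a generic entry with nonzero exponent not divisible by $n$, or violate interleaving. Concretely, I would show that if position $j$ in the ordered set is minimal and position $j+1$ is maximal, then every subsequent position must be maximal, by tracking how the equality cases propagate through the chain of interleaving inequalities. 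The converse — that any pattern with this "minimal-initial-string-then-all-maximal" shape in every row is stable — follows by the exponent bound above: all nonzero contributions are powers of $q$ or Gauss sums $g_\bullet(p^{e-1},p^e)$, none of which vanish.

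\textbf{The main obstacle} I anticipate is the combinatorial bookkeeping in the last step: proving that the minimal/maximal pattern, once broken in the "wrong" direction within a single row, necessarily forces a generic entry or an interleaving violation. This requires carefully setting up the chain of inequalities $\min(a_{i-1,j}, a_{i,j}) \ge b_{i,j} \ge \max(a_{i-1,j+1}, a_{i,j+1})$ and $\min(b_{i+1,j-1}, b_{i,j-1}) \ge a_{i,j} \ge \max(b_{i+1,j}, b_{i,j})$ and extracting from a "maximal then minimal" transition that some intermediate entry lies strictly between its neighbors while carrying a positive exponent $v_{i,j}$ or $u_{i,j}$ that is too small to be divisible by $n$. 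The bound relating these exponents to $d_\lambda(\alpha)$ and thence to the Stability Assumption (\ref{eqn:stability}) is the quantitative heart of the argument, and getting the inequalities tight enough — rather than off by a factor of $2$ coming from the long root in $C_r$ — is where I would expect to spend the most care. Everything else is a matter of unwinding definitions.
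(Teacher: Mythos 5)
Your first step---showing that under the Stability Assumption no entry of a stable pattern can be generic, because a generic entry forces a divisibility condition $n \mid u_{i,j}$ (or $n \mid v_{i,j}(\delta_{jr}+1)$) that cannot hold when the relevant exponent is positive and too small---is exactly the paper's argument, and your observation that this makes stability independent of the particular $n$ is also in the paper. (One caution: at the long-root positions $j=r$ the relevant quantity is $2v_{i,r}$, which can in fact reach or exceed $n$; the divisibility still fails because $n$ is odd and $0<2v_{i,r}<2n$, or because of the extra factor $\gcd(n,\|\alpha\|^2)$ in \eqref{eqn:stability} when $n$ is even. So ``bounded strictly below $n$'' is not literally the right bound there, as you suspected.)

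The genuine gap is in your second step, the ordering constraint. You propose to rule out a maximal-then-minimal transition in the ordered set by ``tracking how the equality cases propagate through the chain of interleaving inequalities'' and deriving either a generic entry or an interleaving violation. Neither happens: the configuration $b_{i,j}=a_{i-1,j+1}$ (maximal) followed by $b_{i,j+1}=a_{i-1,j+1}$ (minimal) is perfectly consistent with the interleaving inequalities and contains no generic entry---it simply forces $b_{i,j}=b_{i,j+1}$, i.e.\ two equal adjacent entries in the horizontal row $b_i$ (and likewise $a_{i,j+1}=b_{i,j}=a_{i,j}$ for a max-then-min pair in the $a$-row, and similarly across the turn at $b_{i,r},a_{i,r}$). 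The ingredient you never invoke is \emph{strictness}: by \eqref{eqn:defineGP}, $G(P)=0$ whenever $P$ is not strict, and it is precisely the strictness requirement---not interleaving, not the Gauss-sum exponents---that forbids the (maximal, minimal) adjacency and yields the ``initial minimal string, then all maximal'' shape. This is the paper's one-line conclusion of the proof, and without it your planned argument for this half does not close. (For the converse direction you should likewise check that the min-then-max shape, propagated downward from the strict top row, automatically produces a strict pattern, so that the nonvanishing of the remaining $q$-powers and Gauss sums $g_\bullet(p^{e-1},p^e)$ indeed gives $G(P)\neq 0$.)
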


\begin{proof}
 If any element $a_{i,j}$ or $b_{i,j}$ in the pattern $P$ is neither maximal nor minimal, i.e. is ``generic'' in the sense of Definition~\ref{def:minmaxgen}, then $\gamma_a(i,j)$ (or $\gamma_b(i,j)$, resp.) is non-zero if and only if $n | u_{i,j}$ according to (\ref{gammaalong}) (or $n | v_{i,j}(\delta_{jr}+1)$ according to (\ref{gammablong}), resp.). But one readily checks that $n$ is precisely chosen in the Stability Condition so that $n > \max_{i,j} \{ u_{i,j}, (\delta_{jr}+1) v_{i,j} \}$ and hence neither divisibility condition can be satisfied. Therefore all entries of any stable $P$ must be maximal or minimal. The additional necessary condition that $P$ be strict (as in Definition~\ref{def:strict}) so that $G(P)$ is not always zero according to (\ref{eqn:defineGP}) guarantees that neighboring entries in the ordered set can never be of the form (maximal,minimal), which gives the result.  
\end{proof}

Note that the number of stable patterns $P$ is thus $2^r r! = |W(C_r)|,$ the order of the Weyl group of $C_r$.

\subsection{Action of $W$ on Euclidean space}

In demonstrating the equality of the two prime-power descriptions, we found it necessary to use an explicit coordinatization of the the root system embedded in $\mathbb{R}^r$; it would be desirable to find a coordinate-free proof.
Let $\mathbf{e}_i$ be the standard basis vector (1 in $i$th component, 0 elsewhere) in $\mathbb{R}^r.$ We choose the following coordinates for the simple roots of $C_r$:  
\begin{equation}\label{Crposroots}
 \alpha_1 = 2\mathbf{e}_1, \quad \alpha_2 = \mathbf{e}_2 - \mathbf{e}_1, \quad \ldots, \quad \alpha_r = \mathbf{e}_r - \mathbf{e}_{r-1}.
\end{equation}
Consider an element $w \in W(C_r)$, the Weyl group of $C_r$.  As an action on $\mathbb{R}^r$, this group is generated by all permutations $\sigma$ of the basis vectors $\mathbf{e}_1, \ldots, \mathbf{e}_r$ and all reflections $\mathbf{e}_i \mapsto - \mathbf{e}_i$ for $i=1,\ldots,r$.  Thus we may describe the action explicitly using $\varepsilon_w^{(i)} \in \{+1, -1\}$ for $i=1, 2, \ldots, r$ so that  
\begin{equation}
 w(t_1, t_2, \ldots, t_r) = (\varepsilon_w^{(1)}t_{\sigma^{-1}(1)}\, , \,\varepsilon_w^{(2)}t_{\sigma^{-1}(2)}\, , \, \ldots, \, \varepsilon_w^{(r)}t_{\sigma^{-1}(r)}). \label{waction}
\end{equation}

In the following proposition, we associate a unique Weyl group element $w$ with each $GT$-pattern $P$ that is stable.  In this result, and in the remainder of this section, it will be convenient to refer to the rows of $P$ beginning at the {\em bottom} rather than the top.  We will therefore discuss rows $a_{r-i}$, for $1 \leq i \leq r$, for instance. 

\begin{proposition}\label{prop:wforP}
Let $P$ be a stable strict $GT$-pattern with top row $L_r \ L_{r-1} \ \cdots \ L_1$, hence with associated dominant weight vector $\lambda = \sum_{i=1}^r \ell_i \varepsilon_i$.  Let non-negative integers $k_1(P), \ldots, k_r(P)$ be defined as in \eqnref{def:ki}, and let $k_{r+1}(P) = 0$.  Then there exists a unique element $w \in W(C_r)$ such that
\begin{equation}
 \lambda + \rho - w(\lambda + \rho) = (2k_1-k_2, k_2-k_3, \ldots, k_{r-1}-k_r, k_r) = \sum_{i=1}^r k_i \alpha_i
\end{equation}
In fact, for $i=2, \ldots, r$, 
\begin{equation}
 k_{i+1} - k_i + L_i = -\wgt_i = \varepsilon_w^{(i)} L_{\sigma^{-1}(i)},
\end{equation}
where $L_{\sigma^{-1}(i)}$ is the unique element in row $a_{r-i}$ that is not in row $a_{r+1-i}$, and the weight coordinate $\wgt_i$ is as in~\eqnref{eqn:defwgt}.    Similarly,
\begin{equation}
 k_2 - 2k_1 + L_1 = -\wgt_1 = \varepsilon_w^{(1)} L_{\sigma^{-1}(1)},
\end{equation}
where $L_{\sigma^{-1}(1)}$ is the unique element in row $a_{r-1}$ that is not in row $a_r$.
\end{proposition}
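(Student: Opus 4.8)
The plan is to extract from a stable strict $GT$-pattern $P$ the Weyl group data $(\sigma, \varepsilon_w^{(1)}, \ldots, \varepsilon_w^{(r)})$ directly from the ``minimal-then-maximal'' structure guaranteed by the previous proposition, and then verify the three displayed identities by a bottom-up induction on the rows of $P$. First I would record the key structural fact: by the characterization of stable patterns, each pair of rows indexed by $i$ consists of an interleaving pattern in which row $a_{r-i}$ is obtained from row $a_{r+1-i}$ by inserting exactly one new entry; since $a_{r-i}$ has one more entry than $a_{r+1-i}$, there is a \emph{unique} element $L_{\sigma^{-1}(i)}$ of row $a_{r-i}$ absent from row $a_{r+1-i}$, which both defines the permutation value $\sigma^{-1}(i)$ and, via whether that new entry enters as a ``minimal'' (bottom, $b$-side) or ``maximal'' (top, $a$-side) extremal value, determines the sign $\varepsilon_w^{(i)} \in \{\pm 1\}$. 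This is the content of the phrase ``initial string of minimal elements followed by maximal elements'': reading the ordered set $\{b_{i,i}, \ldots, b_{i,r}, a_{i,r}, \ldots, a_{i,i+1}\}$, the position at which minimal switches to maximal pins down exactly which old value gets ``doubled over'' and hence which new value appears.

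Next I would compute $\wgt_i(P)$ from \eqnref{eqn:defwgt} in terms of the row sums $s_a(r-i), s_b(r+1-i), s_a(r+1-i)$. The point is that $s_a(r-i) - s_a(r+1-i)$ is precisely the new entry $L_{\sigma^{-1}(i)}$ (the only difference between consecutive $a$-rows), and in the stable case the interleaving row of $b$'s is forced — each $b_{r+1-i, m}$ equals either its upper-left or upper-right $a$-neighbor — so $s_b(r+1-i)$ collapses to either $s_a(r-i)$ or $s_a(r+1-i)$ depending on the min/max switch point. Substituting into \eqnref{eqn:defwgt} then yields $\wgt_i = \pm L_{\sigma^{-1}(i)}$ with the sign matching $\varepsilon_w^{(i)}$; this is exactly the second and third displayed equations once one recalls from \eqnref{eqn:kweights} that $k_{i+1} - k_i + L_i = -\wgt_i$ for $2 \le i \le r$ and $k_2 - 2k_1 + L_1 = -\wgt_1$ (using $k_{r+1} = 0$). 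The first displayed equation, $\lambda + \rho - w(\lambda+\rho) = \sum k_i \alpha_i$, then follows by assembling: in the coordinates \eqnref{Crposroots}, $\lambda + \rho = (L_1, \ldots, L_r)$ read appropriately against $\mathbf{e}_i$, so $w(\lambda+\rho)$ has $i$-th coordinate $\varepsilon_w^{(i)} L_{\sigma^{-1}(i)}$ by \eqnref{waction}, and the componentwise difference $L_i - \varepsilon_w^{(i)} L_{\sigma^{-1}(i)} = L_i + \wgt_i = k_i - k_{i+1}$ (resp.\ $2k_1 - k_2$ in the first coordinate) matches the coefficient vector $(2k_1 - k_2, k_2 - k_3, \ldots, k_r)$ of $\sum k_i \alpha_i$ expanded in the $\mathbf{e}$-basis via \eqnref{Crposroots}.

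For uniqueness of $w$: since the stable pattern's row-by-row extremal data visibly determines $(\sigma, \varepsilon_w)$ and conversely each Weyl element acts as a signed permutation \eqnref{waction}, the map $P \mapsto w$ is well-defined and the identity $\lambda + \rho - w(\lambda+\rho) = \sum k_i \alpha_i$ fixes $w$ uniquely among elements of $W(C_r)$ because $\lambda + \rho$ is strictly dominant (all $L_i$ distinct and positive, as $L_i = l_1 + \cdots + l_i + i$), so its $W$-orbit has trivial stabilizer and $w$ is recovered from $w(\lambda+\rho)$. I expect the main obstacle to be the bookkeeping in the middle step: carefully tracking, through the forced interleaving of the $b$-row, exactly how the ``switch position'' between the minimal-block and the maximal-block translates into the index $\sigma^{-1}(i)$ and the sign, and checking that the telescoping of row sums in \eqnref{eqn:defwgt} produces precisely $\pm L_{\sigma^{-1}(i)}$ with no stray terms — the type-$C$ asymmetry between the $a$-rows (lengths decreasing by one) and $b$-rows (same length as the $a$-row above) and the special role of $j=r$ (the $\delta_{jr}$ in the folding) make this more delicate than the type-$A$ analogue, and the first ($\alpha_1$ long root) coordinate has to be handled separately because of the factor of $2$.
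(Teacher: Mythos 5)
Your proposal is correct and follows essentially the same route as the paper: compute $(\lambda+\rho)-\sum_i k_i\alpha_i = -(\wgt_1,\ldots,\wgt_r)$ from the definitions, observe that stability forces the weight vector to be a signed permutation of $(L_1,\ldots,L_r)$, and read off $w$ (your appeal to the regularity of $\lambda+\rho$ makes the uniqueness argument cleaner than the paper's one-line assertion). One small caution: the $i=1$ relation $k_2-2k_1+L_1=-\wgt_1$ should be derived directly from \eqnref{def:ki} rather than recalled from \eqnref{eqn:kweights}, whose $i=1$ case as printed is off by a factor of $2$ from \eqnref{def:ki} in the long-root coordinate — though the relation you actually write down is the correct one.
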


\begin{proof}
The definitions for $\rho$ and $\lambda$ give $\lambda + \rho = (L_1, \ldots, L_r)$ in Euclidean coordinates.  We compute the coordinates of $(\lambda + \rho)-\sum_{i=1}^r k_i \alpha_i$, using \eqnref{def:ki}.  This gives
\begin{equation}
  L_1 + k_2 - 2k_1  = -[s_a(r-1) - 2s_b(r) + s_a(r)] = -\wgt_1
\end{equation}
and similarly, for $i=2, \ldots, r$,
\begin{equation}
  L_i + k_{i+1} - k_i = -[s_a(r-i) - 2s_b(r+1-i) + s_a(r+1-i)] = -\wgt_i,
\end{equation}
so that 
\begin{equation}
 \lambda + \rho - \sum_{i=1}^r k_i \alpha_i  = -(\wgt_1, \wgt_2, \ldots, \wgt_r).
\end{equation}
Each pattern $P$ has a unique weight vector.  Since $P$ is a stable pattern, it is easy to see that the $i$th weight consists of the unique entry that is in row $a_{r-i}$ but not in row $a_{r+1-i}$, with a negative sign if this entry is present in row $b_{r+1-i}$, or a positive sign if not.   Thus the weight vector is simply a permutation of the entries in the top row, with a choice of sign in each entry.  We may find a unique $w$ (whose action is described above), for which 
\begin{equation}
 w(\lambda + \rho) = (\varepsilon_w^{(1)} L_{\sigma^{-1}(1)}, \ldots, \varepsilon_w^{(r)} L_{\sigma^{-1}(r)}) = -\,(\wgt_1, \, \wgt_2, \, \ldots, \, \wgt_r).
\end{equation}
Thus $L_{\sigma^{-1}(i)}$ is the unique element in row $a_{r-i}$ that is not present in row $a_{r+1-i}$.
\end{proof}
\bigbreak

\begin{corollary}\label{lemma:rowa_i}
Let $P$ be a stable strict $GT$-pattern with top row $L_r \ L_{r-1} \ \cdots \ L_1$.  For $1 \leq i \leq r$, the set of elements in row $a_{r-i}$ satisfies the following:
\begin{equation}
\{a_{r-i, r+1-i}\, , \, a_{r-i, r+2-i}\, , \, \ldots \, , \, a_{r-i,r} \} = \{ L_{\sigma^{-1}(i)}\, , \, L_{\sigma^{-1}(i-1)}\, , \, \ldots \, , \, L_{\sigma^{-1}(1)} \}
\end{equation}
\end{corollary}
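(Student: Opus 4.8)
The plan is to prove this by a short induction on $i$, reading the pattern from the \emph{bottom} upward, with Proposition~\ref{prop:wforP} carrying essentially all the weight. That proposition has already identified, for each $i$, the single entry that distinguishes row $a_{r-i}$ from the shorter row $a_{r+1-i}$: it says $L_{\sigma^{-1}(i)}$ is the unique entry occurring in row $a_{r-i}$ but not in row $a_{r+1-i}$ (with the convention that the empty row $a_r$ covers the case $i=1$). The corollary is just the telescoped form of this, so the task reduces to assembling the cumulative statement carefully, keeping track of cardinalities.

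Concretely, I would first dispose of the base case $i=1$: row $a_{r-1}$ consists of the single entry $a_{r-1,r}$, and since row $a_r$ is empty, Proposition~\ref{prop:wforP} forces $a_{r-1,r}=L_{\sigma^{-1}(1)}$. For the inductive step I would assume the entry set of row $a_{r+1-i}$ equals $\{L_{\sigma^{-1}(1)},\ldots,L_{\sigma^{-1}(i-1)}\}$ — a set of exactly $i-1$ distinct values, using that $P$ is strict and that the $L_j$ are pairwise distinct. Row $a_{r-i}$ has $i$ entries, again pairwise distinct by strictness, and Proposition~\ref{prop:wforP} tells us that all but one of them, the exception being $L_{\sigma^{-1}(i)}$, already occur in row $a_{r+1-i}$. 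Those $i-1$ surviving entries are then $i-1$ distinct elements of an $(i-1)$-element set, so they exhaust it; adjoining $L_{\sigma^{-1}(i)}$ yields exactly $\{L_{\sigma^{-1}(1)},\ldots,L_{\sigma^{-1}(i)}\}$, closing the induction. (Taking $i=r$ recovers the automatic fact that the top row is $\{L_1,\ldots,L_r\}$, a consistency check.)

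I do not expect a serious obstacle here: the genuine content — that each pattern row is obtained from the one below it by adjoining a single top-row value — is already encoded in the phrasing of Proposition~\ref{prop:wforP} and rests, in turn, on the characterization of stable patterns given above (minimal entries are copied verbatim from the row above). The one point deserving care is precisely the spot where stability is indispensable: for an arbitrary strict $GT$-pattern an entry of a lower row need not reappear one row up, so the cardinality-counting step of the induction would collapse; invoking Proposition~\ref{prop:wforP} is exactly what licenses it. If a fully self-contained write-up were wanted, I would instead read the required containment directly off the minimal/maximal classification of stable patterns before running the induction above, but citing Proposition~\ref{prop:wforP} keeps the argument to a few lines.
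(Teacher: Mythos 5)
Your argument is correct and is essentially the paper's proof: both simply iterate Proposition~\ref{prop:wforP}, which identifies $L_{\sigma^{-1}(j)}$ as the unique entry of row $a_{r-j}$ absent from row $a_{r+1-j}$. The only difference is direction — the paper starts from the top row and eliminates $L_{\sigma^{-1}(r)}, L_{\sigma^{-1}(r-1)}, \ldots$ working downward, while you start from the bottom and adjoin; your explicit cardinality step (distinctness of the $L_j$ plus strictness of $P$) is the same counting fact the paper's ``we eliminate these elements'' tacitly uses.
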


\begin{proof}
From Proposition \ref{prop:wforP}, $L_{\sigma^{-1}(j)}$ is the unique element in row $a_{r-j}$ that is not in row $a_{r+1-j}$.  Working downwards, we eliminate these elements for $j=i, i+1, \ldots, r$, in order to reach row $a_{r-j}$.  Therefore, we are left with the remaining set.
\end{proof}

\subsection{Agreement of the multiple Dirichlet series}\label{subsec:maintheorem}

\begin{bodytheorem}\label{thm:matchsums}
Let $\Phi = C_r$ and choose a positive integer $n$ such that the Stability Assumption (\ref{eqn:stability}) holds.   
\begin{enumerate}
 \item [(i)] Let $P$ be a stable strict $GT$-pattern, and let $G(P)$ be the product of Gauss sums defined in \eqnref{eqn:defineGP} in Section \ref{sec:defineMDS}.  Let $w$ be the Weyl group element associated to $P$ as in Proposition \ref{prop:wforP}.  Then $$G(P) = \prod_{\alpha \in \Phi_w} g_{\| \alpha \|^2}(p^{d_\lambda(\alpha)-1}, p^{d_\lambda(\alpha)}),$$
matching the definition given in \eqnref{eqn:ppartofH}, with $d_\lambda(\alpha)$ as defined in \eqnref{def:dlambda}.
 \item [(ii)] $H_{st}(c_1, \cdots, c_r;m_1, \cdots m_r) = H^{(n)}(c_1, \cdots, c_r;m_1, \cdots m_r)$.  
\end{enumerate}
That is, the Weyl group multiple Dirichlet series in the twisted stable case is identical to the series defined by the Gelfand-Tsetlin description for $n$ sufficiently large.
\end{bodytheorem}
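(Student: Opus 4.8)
The plan is to prove part (i) by comparing the two product formulas factor by factor, matching each entry of a stable pattern $P$ to a positive root of $C_r$, and then to deduce part (ii) from the bijection between stable patterns and $W(C_r)$. Throughout I work in the coordinates \eqnref{Crposroots}, so that for $\lambda+\rho=\sum_k L_k\mathbf{e}_k$ we have $d_\lambda(\mathbf{e}_a-\mathbf{e}_b)=L_a-L_b$ and $d_\lambda(\mathbf{e}_a+\mathbf{e}_b)=L_a+L_b$ for short roots (with $\|\alpha\|^2=1$) and $d_\lambda(2\mathbf{e}_a)=L_a$ for long roots (with $\|\alpha\|^2=2$); note also that the number of entries of $P$ strictly below its top row is $r^2=|\Phi^+|$.

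First I would dispose of the minimal entries of a stable $P$. By the characterization of stable patterns established above, every entry of a given row-pair is minimal or maximal, and strictness forces the minimal ones to form an initial segment of the ordered set $\{b_{i,i},\ldots,b_{i,r},a_{i,r},\ldots,a_{i,i+1}\}$. Hence if $P$ is minimal at $b_{i,j}$ then $b_{i,m}=a_{i-1,m}$ for all $i\le m\le j$, so the sum defining $v_{i,j}$ in \eqnref{def:uv} vanishes and $\gamma_b(i,j)=q^{0}=1$; if $P$ is minimal at $a_{i,j}$ then the whole $b$-row together with $a_{i,r},\ldots,a_{i,j}$ is minimal, forcing $v_{i,r}=0$ and $w_{i,j}=0$, hence $u_{i,j}=0$ and $\gamma_a(i,j)=q^{0}=1$. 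So $G(P)$ reduces to the product over the maximal entries of $P$, and by the compact formulas \eqnref{gammabshort} and \eqnref{gammaashort} such an entry contributes exactly $g_1(p^{v_{i,j}-1},p^{v_{i,j}})$ at a maximal $b_{i,j}$ with $j<r$, $g_2(p^{v_{i,r}-1},p^{v_{i,r}})$ at a maximal $b_{i,r}$, and $g_1(p^{u_{i,j}-1},p^{u_{i,j}})$ at a maximal $a_{i,j}$.

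The heart of the argument is then to identify the maximal entries with $\Phi_w$. Using Corollary \ref{lemma:rowa_i}, the entry-set of each $a$-row is an explicit subset of $\{L_1,\ldots,L_r\}$, and Proposition \ref{prop:wforP} tells us which value is removed in passing between consecutive $a$-rows and with what sign $\varepsilon_w^{(i)}$. Reading off the initial-minimal-then-maximal structure of each row-pair pins down which positions are maximal, and the sums in \eqnref{def:uv} then telescope: $v_{i,j}$ at a maximal inner $b$-entry collapses to a difference $L_a-L_b$ of two top-row values; $v_{i,r}$ at a maximal last-column $b$-entry collapses to a single $L_a$; and $u_{i,j}=v_{i,r}+w_{i,j}$ at a maximal $a$-entry collapses to $L_a-L_b$ or $L_a+L_b$ according to whether the relevant coordinates carry equal or opposite signs in $w(\lambda+\rho)$. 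Matching these values against the formulas for $d_\lambda(\alpha)$ above identifies each maximal inner entry with a short root and each maximal last-column $b$-entry with a long root; one then checks, using that $w$ is a signed permutation together with the sign data $\varepsilon_w^{(i)}$, that the roots so produced are distinct and are exactly the elements of $\Phi_w$, and that the contribution of each such entry is $g_{\|\alpha\|^2}(p^{d_\lambda(\alpha)-1},p^{d_\lambda(\alpha)})$, which gives (i). I expect this telescoping-and-matching step to be the main obstacle: carrying it out forces one to track, simultaneously over all $r$ row-pairs, which $L_k$'s survive into each row, the sign pattern $\varepsilon_w^{(i)}$, and the constraints imposed by the interleaving inequalities — which is precisely why the coordinatization of $C_r$ is used and a coordinate-free proof is not immediate.

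For part (ii), recall $H^{(n)}(p^{\mathbf{k}};p^{\mathbf{l}})=\sum_{k(P)=\mathbf{k}}G(P)$ over all $P\in GT(\lambda+\rho)$. In the stable range of $n$ a pattern has $G(P)\ne0$ precisely when it is stable, and by Proposition \ref{prop:wforP} together with the count $2^r r!=|W(C_r)|$ of stable patterns the map $P\mapsto w$ is a bijection from stable patterns onto $W(C_r)$ under which $k(P)=\mathbf{k}$ is equivalent to $\lambda+\rho-w(\lambda+\rho)=\sum_i k_i\alpha_i$. Hence at most one stable pattern contributes to a given $\mathbf{k}$, and by part (i) that contribution $G(P)$ equals $\prod_{\alpha\in\Phi_w}g_{\|\alpha\|^2}(p^{d_\lambda(\alpha)-1},p^{d_\lambda(\alpha)})$, which is exactly $H_{st}^{(n)}(p^{\mathbf{k}};p^{\mathbf{l}})$ as given in \eqnref{eqn:ppartofH} (both sides vanishing when no such $w$ exists). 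The identity then extends from prime-power coefficients to all $\mathbf{c},\mathbf{m}$ since $H^{(n)}$ and $H_{st}^{(n)}$ obey the same twisted multiplicativity \eqnref{cmult}, \eqnref{mmult}.
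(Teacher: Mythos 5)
Your proposal is correct and follows essentially the same route as the paper: reduce $G(P)$ to the product over maximal entries (minimal entries contributing $q^0=1$), telescope the sums defining $u_{i,j}$ and $v_{i,j}$ row-pair by row-pair, and match the resulting exponents $L_a\pm L_b$ and $L_a$ against $d_\lambda(\alpha)$ for $\alpha$ in $\Phi_w$, organized by the sign data $\varepsilon_w^{(i)}$ and the signed permutation $w$, with part (ii) following from the pattern--Weyl-group bijection and twisted multiplicativity. The "telescoping-and-matching" step you flag as the main obstacle is exactly what the paper executes via its decomposition $\Phi_w=\bigsqcup_i\Phi_w^{(i)}$ into roots of types L, S$^+$, S$^-$ and the inversion counts $\inv_i(w^{-1})$, $\pr_i(w^{-1})$.
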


\begin{remark} Our main conjecture presented in the introduction states that $n$ should be odd. In fact, the proof of the above theorem works for any $n$ satisfying the Stability Assumption, regardless of parity. However, we believe this is an artifact of the relative combinatorial simplicity of the ``stable" coefficients. As noted in Remark 1, one expects a distinctly different combinatorial recipe than the one presented in this paper to hold uniformly for all even $n$.
\end{remark}

\begin{proof}
 It is clear that part (i) implies part (ii), since both coefficients are obtained from their prime-power parts by means of twisted multiplicativity.

In proving part (i), let $P$ be the $GT$-pattern with top row $L_r \, L_{r-1} \cdots L_1$ associated to $w$ by Proposition \ref{prop:wforP}.  We first note that since $P$ is stable, we have $u_{i,j} = 0$ if $P$ is minimal at $a_{i,j}$, and $v_{i,j} = 0$ if $P$ is minimal at $b_{i,j}$.  Thus
\[
 G(P) = \prod_{a_{i,j} \text{ maximal}} g_1(p^{u_{i,j}-1}, p^{u_{i,j}}) \prod_{b_{i,j} \text{ maximal}} g_{\delta_{jr}+1}(p^{v_{i,j}-1},p^{v_{i,j}}),
\]
It suffices to show that the set of Gauss sum exponents $u_{i,j}$ and $v_{i,j}$ at maximal entries in $P$ coincides with the set of $d_\lambda(\alpha)$ as $\alpha$ runs over $\Phi_w$.  (In fact, we show a slightly sharper statement, which matches Gauss sum exponents at maximal entries in pairs of rows of $P$ with values of $d_\lambda(\alpha)$ as $\alpha$ runs over certain subsets of $\Phi_w$.)  

The number of maximal elements in a pair of rows $b_{r+1-i}$ and $a_{r+1-i}$ is described in the next result.   First, we say that $(i,j)$ is an {\em $i$-inversion} for $w^{-1}$ if $j < i$ and $\sigma^{-1}(j) > \sigma^{-1}(i)$.  The number of these pairs, as well as the number of those for which the inequality is preserved rather than inverted, will play an important role in counting  Gauss sums.  To this end, we define the following quantities
\begin{equation}\begin{split}
 \inv_i(w^{-1}) &= \# \{ (i,j) \mid \sigma^{-1}(j) > \sigma^{-1}(i) \  \text{and} \  j < i\},\\
 \pr_i(w^{-1}) &= \# \{ (i,j) \mid \sigma^{-1}(j) < \sigma^{-1}(i) \  \text{and} \  j < i\}. \label{def:invpr}
\end{split}\end{equation}

\begin{proposition}\label{prop:maximal}
Let $P$ be a stable strict $GT$-pattern with top row $L_r \ L_{r-1} \ \cdots \ L_1$, and let $w \in W$ be the Weyl group element associated to $P$ as in Proposition \ref{prop:wforP}.  Let $\inv_i(w)$ and $ \pr_i(w)$ be as defined in~\eqref{def:invpr}, and let $m_i(P)$ denote the number of maximal entries in rows $b_{r+1-i}$ and $a_{r+1-i}$ together.  Then,
\begin{equation}
m_i(P) = \begin{cases}
\inv_i(w^{-1}) & \text{ if } \varepsilon_w^{(i)} = +1,\\
i +  \pr_i(w^{-1})  \ &\text{ if } \varepsilon_w^{(i)} = -1.\end{cases}
\end{equation}
\end{proposition}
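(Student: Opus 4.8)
The strategy is to translate both sides of the claimed identity into combinatorial data attached to the permutation $\sigma$ and the sign $\varepsilon_w^{(i)}$, and then to match them row-pair by row-pair. First I would use Corollary~\ref{lemma:rowa_i} to describe the entries of rows $a_{r-i}$ and $a_{r+1-i}$ explicitly as (signed) subsets of the top row $\{L_r,\ldots,L_1\}$: row $a_{r-i}$ consists of $\{L_{\sigma^{-1}(1)},\ldots,L_{\sigma^{-1}(i)}\}$ while row $a_{r+1-i}$ consists of $\{L_{\sigma^{-1}(1)},\ldots,L_{\sigma^{-1}(i-1)}\}$. By Proposition~\ref{prop:wforP}, the single entry removed when passing from $a_{r-i}$ up to $a_{r+1-i}$ is $L_{\sigma^{-1}(i)}$, and it appears (or fails to appear) among the $b_{r+1-i,j}$ precisely according to the sign $\varepsilon_w^{(i)}$. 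This pins down the $b$-row $\{b_{r+1-i,r+1-i},\ldots,b_{r+1-i,r}\}$ as interleaving the two $a$-rows, which forces the position of $L_{\sigma^{-1}(i)}$ relative to the other surviving entries.

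The next step is to count maximal entries directly from this description. By the stability characterization (the proposition preceding the ``Action of $W$'' subsection), in each pair of rows the ordered string $\{b_{i,i},\ldots,b_{i,r},a_{i,r},\ldots,a_{i,i+1}\}$ is an initial block of minimal entries followed by a terminal block of maximal entries, so $m_i(P)$ is just the length of that terminal maximal block. I would compute this length by locating, in the interleaving pattern of the two $a$-rows, the transition point between ``$b$ forced to its minimal value'' and ``$b$ forced to its maximal value.'' Because row $a_{r-i}$ is obtained from row $a_{r+1-i}$ by inserting the single value $L_{\sigma^{-1}(i)}$ into its sorted position, every $b$ entry sitting strictly above a gap not adjacent to the inserted value is pinned (minimal or maximal depending on which side of the insertion it lies), and the count of maximal ones is exactly the number of entries of $a_{r+1-i}$ that are smaller than $L_{\sigma^{-1}(i)}$ when $\varepsilon_w^{(i)}=+1$, and complementarily when $\varepsilon_w^{(i)}=-1$. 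Translating ``entries of $a_{r+1-i}$ smaller than $L_{\sigma^{-1}(i)}$'' back through the identification with $\{L_{\sigma^{-1}(1)},\ldots,L_{\sigma^{-1}(i-1)}\}$, and using that the top row is strictly decreasing so that $L_a < L_b \iff a > b$, this count becomes $\#\{\,j<i : \sigma^{-1}(j) > \sigma^{-1}(i)\,\} = \inv_i(w^{-1})$ in the $+1$ case. In the $-1$ case the inserted value is negated when it appears in the $b$-row, which shifts every comparison, contributing an extra $i$ (one for each position in the row pair) plus $\pr_i(w^{-1})$ rather than $\inv_i(w^{-1})$; I would verify the ``$+i$'' carefully by checking the boundary entries $b_{r+1-i,r+1-i}$ and $a_{r+1-i,r+1-i}$ separately, since these sit at the ends of the ordered string where the interleaving inequalities degenerate.

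The main obstacle I anticipate is precisely this bookkeeping at the ends of the ordered string and the sign flip in the $\varepsilon_w^{(i)}=-1$ case: one must be scrupulous about whether a given $b$-entry is counted as maximal via the ``$b_{i,j}=a_{i-1,j+1}$'' clause or the ``$b_{i,r}=0$'' clause of Definition~\ref{def:minmaxgen}, and about the off-by-one between the index ranges of the $a$- and $b$-rows. To keep this manageable I would set up the argument by induction on $i$ (equivalently, processing rows from the bottom up), so that at stage $i$ the row $a_{r+1-i}$ and its maximal/minimal structure are already known, and only the effect of re-inserting $L_{\sigma^{-1}(i)}$ needs to be analyzed; this isolates a single ``insert one element into a sorted list'' operation at each step and makes the inversion/coversion count transparent. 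A clean way to present the final count is to split into the cases according to whether $L_{\sigma^{-1}(i)}$ is inserted at the top, the bottom, or the interior of row $a_{r-i}$, and in each case read off the length of the terminal maximal block.
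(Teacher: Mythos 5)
Your plan follows essentially the same route as the paper's proof: use Corollary~\ref{lemma:rowa_i} to identify rows $a_{r-i}$ and $a_{r+1-i}$ with $\{L_{\sigma^{-1}(j)}\}_{j\leq i}$ and $\{L_{\sigma^{-1}(j)}\}_{j<i}$, locate $L_{\sigma^{-1}(i)}$ as the transition point between the minimal and maximal blocks in the stable row pair, and count using the strict decrease of the rows, with the $\varepsilon_w^{(i)}=-1$ case accumulating $i-1$ maximal entries from the $a$-row, one from $b_{r+1-i,r}=0$, and $\pr_i(w^{-1})$ from the rest of the $b$-row. One small slip to fix when you write it up: in the $+1$ case the maximal entries of row $a_{r+1-i}$ are those \emph{larger} than $L_{\sigma^{-1}(i)}$ (they lie to its left), and $L_a<L_b\iff a<b$; your two reversals cancel, so your final count $\#\{\,j<i:\sigma^{-1}(j)>\sigma^{-1}(i)\,\}=\inv_i(w^{-1})$ is nonetheless correct.
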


\begin{proof}
Recall from our means of associating $w$ to $P$ that $\varepsilon_w^{(i)}$ is opposite in sign from the $i$th Gelfand-Tsetlin weight.  Consider row $b_{r+1-i}$ together with the rows immediately above and below:
{\small \[
\begin{array}{ccccccccc}
 a_{r-i,r+1-i} & & a_{r-1, r+2-i} & & \cdots & &\cdots & a_{r-i,r} & \\
 & b_{r+1-i,r+1-i} & & \cdots & & \cdots & & & b_{r+1-i,r}\\
 & & a_{r+1-i, r+2-i} & & \cdots & & \cdots & a_{r+1-i,r}
\end{array}
\]}

Suppose $\varepsilon_w^{(i)} = +1$, so $L_{\sigma^{-1}(i)}$ is missing from row $a_{r+1-i}$ but present in row $b_{r+1-i}$.  Then there are no maximal entries in row $b_{r+1-i}$, and $m_i$ maximal entries in row $a_{r+1-i}$, so
\begin{equation}\label{eqn:rowb_pos}
 b_{r+1-i, r+j-i} = a_{r-i, r+j-i} \quad \text{ for } 1 \leq j \leq i, 
\end{equation}
and
\begin{equation}\label{eqn:rowa_pos}
a_{r+1-i, r+(j+1)-i} = 
	\begin{cases} b_{r+1-i, r+j-i} \quad &\text{ for } 1 \leq j \leq m_i,\\
	b_{r+1-i,r+(j+1)-i} \quad &\text{ for } m_i+1 \leq j \leq i.
	\end{cases}
\end{equation}
Moreover, the entry $L_{\sigma^{-1}(i)}$ in row $b_{r+1-i}$  marks the switch from maximal to minimal as we move from left to right in row $a_{r+1-i}$.  That is, all entries in row $a_{r+1-i}$ to the left of $L_{\sigma^{-1}(i)}$ are maximal, while all those  to the right are minimal.  By Lemma \ref{lemma:rowa_i}, row $a_{r+1-i}$ consists of the elements in the set $\{ L_{\sigma^{-1}(j)} \mid j < i \}$.  Since the rows of $P$ are strictly decreasing, this means the maximal entries in row $a_{r+1-i}$ are given by\[
\{ L_{\sigma^{-1}(j)} \mid j < i \text{ and } \sigma^{-1}(j) > \sigma^{-1}(i)\} 
\]
This set clearly has order $\inv_i(w^{-1})$.  
\medbreak

Now suppose $\varepsilon_w^{(i)} = -1$, so that $L_{\sigma^{-1}(i)}$ is missing from both row $a_{r+1-i}$ and row $b_{r+1-i}$.  Then all entries in row $a_{r+1-i}$ are maximal, and the last $m_i-i+1$ entries in row $b_{r+1-i}$ are maximal, so
\begin{equation}\label{eqn:rowa_neg}
 a_{r+1-i, r+(j+1)-i} = b_{r+1-i, r+j-i}, \quad \text{ for } 1 \leq j \leq i-1, 
\end{equation}
and
\begin{equation}\label{eqn:rowb_neg}
b_{r+1-i, r+j-i} = 
	\begin{cases} a_{r-i, r+j-i} \quad &\text{ for } 1 \leq j \leq 2i-1-m_i, \\
	a_{r-i,r+(j+1)-i} \quad &\text{ for } 2i-m_i \leq j \leq i-1,\\
	0 \quad &\text{ for } j = i.
	\end{cases}
\end{equation}
The entry $L_{\sigma^{-1}(i)}$ in row $a_{r-i}$  marks the switch from minimal to maximal as we move to the right in row $b_{r+1-i}$.  That is, all entries below and to the left of $L_{\sigma^{-1}(i)}$ are minimal, while those below and to the right are maximal.  Since rows $b_{r+1-i}$ and $a_{r+1-i}$ are identical, the entries of row $b_{r+1-i}$ are $\{ L_{\sigma^{-1}(j)} \mid j < i \}$, by Lemma \ref{lemma:rowa_i}.  Moreover, since rows are strictly decreasing, the maximal entries in row $b_{r+1-i}$ are given by
\[
\{ L_{\sigma^{-1}(j)} \mid j < i \text{ and } \sigma^{-1}(j) < \sigma^{-1}(i)\} \cup \{ 0\}
\]
This set has order $\pr_i(w^{-1}) + 1$.  Counting maximal entries in both rows, we obtain $m_i = (i-1) +  \pr_i(w^{-1}) + 1 = i + \pr_i(w^{-1})$.
\end{proof}

Next, we establish a finer characterization of $\Phi_w = \{ \alpha \in \Phi^+ \, \mid \, w(\alpha) \in \Phi^- \}$.  For $\Phi = C_r$, the roots in $\Phi^+$ take different forms; the positive long roots are $2 \mathbf{e}_\ell$ for $1 \leq \ell \leq r$, while the positive short roots are $\mathbf{e}_m \pm \mathbf{e}_\ell$ for $1 \leq \ell < m \leq r$.  We will express $\Phi_w$ as a disjoint union of subsets indexed by $i \in \{1, 2, \ldots, r\}$.  To this end, let $i$ be fixed, and let $j$ be any positive integer such that $j < i$. Consider positive roots of the following three types:
\begin{eqnarray*}
 \text{ Type L : }  &\alpha_{i,w} :=& 2 \mathbf{e}_{\sigma^{-1}(i)}.\\
 \text{Type S$^+$ : } &\alpha_{i,j,w}^+ :=& \mathbf{e}_{\sigma^{-1}(j)} + \mathbf{e}_{\sigma^{-1}(i)}.\\
 \text{Type S$^-$ : } &\alpha_{i,j,w}^- :=& 
	\begin{cases}  \mathbf{e}_{\sigma^{-1}(j)} - \mathbf{e}_{\sigma^{-1}(i)}\, \text{ if }\, \sigma^{-1}(j) > \sigma^{-1}(i), \\ \mathbf{e}_{\sigma^{-1}(i)} - \mathbf{e}_{\sigma^{-1}(j)}\, \text{ if }\, \sigma^{-1}(j) < \sigma^{-1}(i). \end{cases}
\end{eqnarray*}
Clearly we encounter each positive root exactly once as $i$ and $j$ vary as indicated.  Let $\Phi_w^{(i)} \subseteq \Phi_w$ denote the set of all $\alpha_{i,w}$, $\alpha_{i,j,w}^+$, $\alpha_{i,j,w}^-$  belonging to $\Phi_w$.  The following lemma completely characterizes $\Phi_w^{(i)}$.    

\begin{lemma}\label{lem:Phi_w^i}
Let $i \in \{1, 2, \ldots, r\}$ be fixed, let $j$ be any positive integer with $j < i$, and let $\Phi_w^{(i)}$ be as defined above.  Then 
\begin{enumerate}
 \item [(1)] $\alpha_{i,w} \in \Phi_w^{(i)}$ if and only if $\varepsilon_w^{(i)} = -1$.
 \item [(2)] $\alpha_{i,j,w}^- \in \Phi_w^{(i)}$ if and only if $\sigma^{-1}(j) < \sigma^{-1}(i)$ and $\varepsilon_w^{(i)} = -1$, or $\sigma^{-1}(j) > \sigma^{-1}(i)$ and $\varepsilon_w^{(i)} = +1$.
 \item [(3)] $\alpha_{i,j,w}^+ \in \Phi_w^{(i)}$ if and only if $\varepsilon_w^{(i)} = -1$.
\end{enumerate}
Consequently,  $|\Phi_w^{(i)}| = m_i(P)$, as defined in Proposition \ref{prop:maximal}.  
\end{lemma}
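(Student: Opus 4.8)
The plan is to verify each of the three equivalences (1), (2), (3) by directly applying the coordinatized Weyl action \eqnref{waction} to each candidate positive root and checking when the image is negative, using the fact that, after reindexing via $\sigma$, the sign pattern $\varepsilon_w^{(i)}$ controls whether the $\sigma^{-1}(i)$-coordinate is flipped. Concretely, for a standard basis vector $\mathbf{e}_{\sigma^{-1}(i)}$ one has $w(\mathbf{e}_{\sigma^{-1}(i)}) = \varepsilon_w^{(i)} \mathbf{e}_i$, and more generally $w$ sends $\mathbf{e}_{\sigma^{-1}(j)} \mapsto \varepsilon_w^{(j)}\mathbf{e}_j$. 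From this the three cases are mechanical:

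\emph{Type L.} $w(2\mathbf{e}_{\sigma^{-1}(i)}) = 2\varepsilon_w^{(i)}\mathbf{e}_i$, which lies in $\Phi^-$ precisely when $\varepsilon_w^{(i)} = -1$; this is (1). \emph{Type S$^+$.} $w(\mathbf{e}_{\sigma^{-1}(j)} + \mathbf{e}_{\sigma^{-1}(i)}) = \varepsilon_w^{(j)}\mathbf{e}_j + \varepsilon_w^{(i)}\mathbf{e}_i$. Since $\alpha_{i,j,w}^+$ is indexed with $i$ fixed and $j<i$ ranging, one must argue that when $\varepsilon_w^{(i)}=+1$ this image is always a positive root and when $\varepsilon_w^{(i)}=-1$ it is always negative; the point is that the ``outer'' index $i$ dominates in the ordering on $C_r$, so the sign of the $\mathbf{e}_i$-component decides membership in $\Phi^{\pm}$ regardless of $\varepsilon_w^{(j)}$. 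This needs a small lemma on how $W(C_r)$ permutes roots relative to the chosen coordinates, but it is the same bookkeeping already used implicitly in Proposition \ref{prop:maximal}. This gives (3). \emph{Type S$^-$.} Here the definition of $\alpha_{i,j,w}^-$ has been rigged so that it is always a \emph{positive} root: whichever of $\sigma^{-1}(j),\sigma^{-1}(i)$ is larger comes with the plus sign. Applying $w$ gives $\pm(\varepsilon_w^{(j)}\mathbf{e}_j - \varepsilon_w^{(i)}\mathbf{e}_i)$ with the sign determined by the case split in the definition; one then checks that the image is negative exactly in the two listed situations ($\sigma^{-1}(j)<\sigma^{-1}(i)$ with $\varepsilon_w^{(i)}=-1$, or $\sigma^{-1}(j)>\sigma^{-1}(i)$ with $\varepsilon_w^{(i)}=+1$), again because the sign of the $\mathbf{e}_i$-component governs. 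This is (2).

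For the final count, I would simply tally: if $\varepsilon_w^{(i)}=+1$, then by (1) and (3) there is no Type L root and no Type S$^+$ root in $\Phi_w^{(i)}$, while by (2) the Type S$^-$ contributions are exactly those $j<i$ with $\sigma^{-1}(j)>\sigma^{-1}(i)$, of which there are $\inv_i(w^{-1})$ by \eqref{def:invpr}; if $\varepsilon_w^{(i)}=-1$, then (1) contributes the single root $\alpha_{i,w}$, (3) contributes all $i-1$ roots $\alpha_{i,j,w}^+$ with $j<i$, and (2) contributes those $j<i$ with $\sigma^{-1}(j)<\sigma^{-1}(i)$, of which there are $\pr_i(w^{-1})$. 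Summing gives $\inv_i(w^{-1})$ in the first case and $1 + (i-1) + \pr_i(w^{-1}) = i + \pr_i(w^{-1})$ in the second, which matches $m_i(P)$ exactly as computed in Proposition \ref{prop:maximal}.

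The main obstacle is not any single equivalence but pinning down cleanly the claim that ``the outer index controls the sign'': that for roots of the form $\pm\mathbf{e}_a \pm \mathbf{e}_b$ in $C_r$ with the coordinates \eqnref{Crposroots}, membership in $\Phi^+$ versus $\Phi^-$ after applying $w$ is decided by the coefficient sign on the basis vector with the larger index. I would isolate this as a preliminary observation about the partial order determined by \eqnref{Crposroots} — namely that $\mathbf{e}_a - \mathbf{e}_b, \mathbf{e}_a+\mathbf{e}_b, 2\mathbf{e}_a$ are positive iff $a>b$ (resp. $a,b$ arbitrary with $a>b$, resp. always) — and then all three parts follow by substituting the two possible values of $\varepsilon_w^{(i)}$ and reading off signs. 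Everything else is routine case-checking against the definitions of $\inv_i$, $\pr_i$, and the Type L/S$^\pm$ roots.
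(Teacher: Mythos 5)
Your proposal is correct and follows essentially the same route as the paper's proof: apply the coordinatized action \eqref{waction} to each of the three root types and observe that, because $j<i$, the sign of the $\mathbf{e}_i$-component alone decides membership in $\Phi^{\pm}$, then tally the two cases $\varepsilon_w^{(i)}=\pm 1$ against $\inv_i(w^{-1})$ and $\pr_i(w^{-1})$ to recover $m_i(P)$. The only cosmetic difference is that you isolate the ``outer index controls the sign'' observation as an explicit preliminary step, whereas the paper carries out the same case-check inline.
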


\begin{proof}
As defined in (\ref{waction}), $w$ acts on a basis vector $\mathbf{e}_{\ell}$ simply as $w(\mathbf{e}_{\ell}) = \varepsilon_w^{(\ell)}\, \mathbf{e}_{\sigma(\ell)}$, and this action extends linearly to each of the roots.  Part (1) is immediate from the definition of $\Phi_w$.  
\smallbreak

For part (2), if $\sigma^{-1}(j) < \sigma^{-1}(i)$ then $w(\alpha_{i,j,w}^-) = \varepsilon_w^{(i)}\mathbf{e}_i - \varepsilon_w^{(j)} \mathbf{e}_j$.  If $\varepsilon_w^{(i)} = +1$, then since $j < i$, we  have $w(\alpha_{i,j,w}^-) \in \Phi^+$ regardless of the value of $\varepsilon_w^{(j)}$.  Thus $\alpha_{i,j,w}^- \notin \Phi_w^{(i)}$.  Similarly, if $\varepsilon_w^{(i)} = -1$, then since $j < i$, we  have $w(\alpha_{i,j,w}^-) \in \Phi^-$ regardless of the value of $\varepsilon_w^{(j)}$.  Thus $\alpha_{i,j,w}^- \in \Phi_w^{(i)}$.

On the other hand, if $\sigma^{-1}(j) > \sigma^{-1}(i)$ then $w(\alpha_{i,j,w}^-) = \varepsilon_w^{(j)}\mathbf{e}_j - \varepsilon_w^{(i)} \mathbf{e}_i$.  Considering the cases $\varepsilon_w^{(i)} = +1, -1$ in turn, we find that regardless of the value of $\varepsilon_w^{(j)}$, we have $w(\alpha_{i,j,w}^-) \in \Phi_w^{(i)}$ if and only if $\varepsilon_w^{(i)} = +1$.
\smallbreak

For part (3), we have $w(\alpha_{i,j,w}^+) = \varepsilon_w^{(j)}\mathbf{e}_j + \varepsilon_w^{(i)} \mathbf{e}_i$.  Using a similar argument, we see that independently of the value of $\varepsilon_w^{(j)}$, $w(\alpha_{i,j,w}^+)$ is a negative root when $\varepsilon_w^{(i)}$ is negative, and a positive root otherwise.  
\smallbreak

Finally, we count elements in $\Phi_w^{(i)}$.  If $\varepsilon_w^{(i)} = +1$, the conditions yield $\inv_i(w^{-1})$ elements of type S$^-$, and zero elements of types L and S$^+$.  On the other hand, if $\varepsilon_w^{(i)} = -1$, there is one element of type L, $i-1$ elements of type S$^+$, and $\pr_i(w^{-1})$ elements of type S$^-$.  In either case, $|\Phi_w^{(i)}| = m_i(P)$.
\end{proof}

\bigbreak
For each of the roots in $\Phi^{(i)}_w$, we compute the corresponding $d_\lambda$ (as defined in (\ref{def:dlambda})) below. 
\begin{lemma}\label{lem:dlambda}
 With the notation as above, we have 
\begin{enumerate}
 \item [(1)] $d_\lambda(\alpha_{i,w}) = L_{\sigma^{-1}(i)}$.
 \item [(2)] $d_\lambda(\alpha_{i,j,w}^-) = \begin{cases}  L_{\sigma^{-1}(j)} - L_{\sigma^{-1}(i)}\, \text{ if }\, \sigma^{-1}(j) > \sigma^{-1}(i), \\ L_{\sigma^{-1}(i)} - L_{\sigma^{-1}(j)}\, \text{ if }\, \sigma^{-1}(j) < \sigma^{-1}(i). \end{cases}$
 \item [(3)] $d_\lambda(\alpha_{i,j,w}^+) = L_{\sigma^{-1}(j)} + L_{\sigma^{-1}(i)}$.
\end{enumerate}

\end{lemma}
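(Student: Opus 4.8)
The plan is to prove the lemma by a direct computation, leveraging the explicit coordinatization already fixed in this section. First I would record the two facts that make the three evaluations immediate. (a) In the Euclidean coordinates of~\eqref{Crposroots} the vector $\lambda+\rho$ has coordinates $(L_1,L_2,\ldots,L_r)$, as noted at the outset of the proof of Proposition~\ref{prop:wforP}; equivalently $\langle\lambda+\rho,\mathbf{e}_\ell\rangle = L_\ell$ for every $\ell$, where $\langle\,\cdot\,,\,\cdot\,\rangle$ is the standard inner product on $\mathbb{R}^r$ making $\mathbf{e}_1,\ldots,\mathbf{e}_r$ orthonormal. (b) The quantity $d_\lambda(\alpha) = 2\langle\lambda+\rho,\alpha\rangle/\langle\alpha,\alpha\rangle$ of~\eqref{def:dlambda} is unchanged if the inner product is scaled by a positive constant; since the normalization declared at the start of Section~\ref{sec:stablecase} (short roots of square-length $1$, long roots of square-length $2$) is precisely $\tfrac12$ times the standard one in the coordinates~\eqref{Crposroots}, we may carry out the computation using the standard inner product, in which $\langle 2\mathbf{e}_\ell,2\mathbf{e}_\ell\rangle = 4$ for a long root and $\langle\mathbf{e}_m\pm\mathbf{e}_\ell,\mathbf{e}_m\pm\mathbf{e}_\ell\rangle = 2$ for a short root.

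With these in hand, each part is a one-line substitution of the three root shapes introduced just before Lemma~\ref{lem:Phi_w^i}. For Type~L, $d_\lambda(\alpha_{i,w}) = 2\langle\lambda+\rho,2\mathbf{e}_{\sigma^{-1}(i)}\rangle/4 = (4L_{\sigma^{-1}(i)})/4 = L_{\sigma^{-1}(i)}$, giving part~(1). For Type~S$^+$, $d_\lambda(\alpha_{i,j,w}^+) = 2\langle\lambda+\rho,\mathbf{e}_{\sigma^{-1}(j)}+\mathbf{e}_{\sigma^{-1}(i)}\rangle/2 = L_{\sigma^{-1}(j)}+L_{\sigma^{-1}(i)}$, giving part~(3). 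For Type~S$^-$, whichever of $\mathbf{e}_{\sigma^{-1}(j)}-\mathbf{e}_{\sigma^{-1}(i)}$ or $\mathbf{e}_{\sigma^{-1}(i)}-\mathbf{e}_{\sigma^{-1}(j)}$ the case distinction in its definition selects, the same computation $d_\lambda = 2\langle\lambda+\rho,\alpha_{i,j,w}^-\rangle/2$ produces the corresponding difference of $L$'s, giving part~(2); the sign there was chosen exactly so that $\alpha_{i,j,w}^-$ is a positive root, which is consistent with the resulting $d_\lambda$ being positive since $L_1 < L_2 < \cdots < L_r$.

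I do not expect any real obstacle here: the lemma is pure bookkeeping once Proposition~\ref{prop:wforP} is available. The one place where care is needed — and essentially the only spot an error could slip in — is reconciling the inner-product normalization declared at the beginning of Section~\ref{sec:stablecase} with the ambient orthonormal inner product used to write the coordinates in~\eqref{Crposroots}; this is dispatched by the scale-invariance of $d_\lambda$ recorded in~(b) above, after which all three formulas fall out by inspection.
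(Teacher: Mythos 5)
Your proof is correct. It reaches the same three formulas as the paper but by a slightly different computational route: the paper expands each of the three root types in the basis of simple roots (e.g.\ $\alpha_{i,w}=\alpha_1+\sum_{k=2}^{\sigma^{-1}(i)}2\alpha_k$) and then evaluates $d_\lambda$ via the defining property $2\langle\epsilon_m,\alpha_k\rangle/\langle\alpha_k,\alpha_k\rangle=\delta_{mk}$ of the fundamental weights, so each case reduces to a telescoping sum of the $l_k+1$. You instead bypass the simple-root decomposition entirely, pairing the Euclidean coordinates $\lambda+\rho=(L_1,\dots,L_r)$ directly against $2\mathbf{e}_\ell$ and $\mathbf{e}_m\pm\mathbf{e}_\ell$, with the scale-invariance of $d_\lambda$ justifying the use of the standard inner product despite the normalization $\|\alpha\|^2\in\{1,2\}$ fixed at the start of Section~\ref{sec:stablecase}. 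Your route is a bit more economical and makes the answer visible by inspection; the paper's keeps everything expressed intrinsically in terms of $\Delta$ and the $\epsilon_i$, which is closer in spirit to a coordinate-free argument. Both normalization bookkeeping points you flag (that $\lambda+\rho=(L_1,\dots,L_r)$ and that the declared inner product is half the standard one) check out, and your sanity check via positivity of $d_\lambda$ on positive roots, using $L_1<L_2<\cdots<L_r$, is valid.
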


\begin{proof}
First, we compute $d_\lambda(\alpha_{i,w}) = d_\lambda(2 \mathbf{e}_{\sigma^{-1}(i)})$.  Using \eqnref{Crposroots}, we have
	\begin{equation}
	 \alpha_{i,w} = \alpha_1 + \sum_{k=2}^{\sigma^{-1}(i)} 2 \alpha_k,
	\end{equation}
where we regard the sum to be $0$ if $\sigma^{-1}(i) = 1$.  Since $\left< \alpha_{i,w}, \alpha_{i,w} \right> = \left< \alpha_1, \alpha_1 \right> = 2$ and $\left< \alpha_k, \alpha_k \right> = 1$ for $k = 2, \ldots, r$, we have 
	\begin{equation}\label{eqn:dlam_long}
	 d_\lambda(\alpha_{i,w}) = \frac{2 \left< \lambda + \rho, \alpha_{i,w} \right>}	
	 {\left< \alpha_{i,w}, \alpha_{i,w} \right>} = \sum_{m=1}^r ( l_m+1) 
	 \sum_{k=1}^{\sigma^{-1}(i)} \frac{2 \left< \epsilon_m, \alpha_k \right>}{\left< \alpha_k, \alpha_k \right>} 
	 = L_{\sigma^{-1}(i)}
	\end{equation}

Next, we compute $d_\lambda(\alpha_{i,j,w}^-) = d_\lambda(\mathbf{e}_{\sigma^{-1}(i)} - \mathbf{e}_{\sigma^{-1}(j)})$ if $\sigma^{-1}(j) < \sigma^{-1}(i)$.  (The computations if $\sigma^{-1}(j) > \sigma^{-1}(i)$ are analogous.)  In this case, \eqnref{Crposroots} gives
	\begin{equation}
	 \alpha_{i,j,w}^- = \sum_{k=\sigma^{-1}(j) + 1}^{\sigma^{-1}(i)} \alpha_k,
	\end{equation}
where the sum is nonempty as $\sigma^{-1}(j) < \sigma^{-1}(i)$.  Since $\left< \alpha_{i,j,w}^-, \alpha_{i,j,w}^- \right> = 1$, we have 
  \begin{equation}\label{eqn:dlam_neg}
	 d_\lambda(\alpha_{i,j,w}^-) = \sum_{m=1}^r ( l_m+1) 
	 \sum_{k=\sigma^{-1}(j)+1}^{\sigma^{-1}(i)} 
	 \frac{2 \left< \epsilon_m, \alpha_k \right>}{\left< \alpha_k, \alpha_k \right>} 
	 = L_{\sigma^{-1}(i)} - L_{\sigma^{-1}(j)}
  \end{equation}

Finally, we compute $d_\lambda(\alpha_{i,j,w}^+) = d_\lambda(\mathbf{e}_{\sigma^{-1}(i)} + \mathbf{e}_{\sigma^{-1}(j)})$.  Here, \eqnref{Crposroots} gives
\begin{equation}
 \alpha_{i,j,w}^+ = \alpha_1 + \sum_{k=2}^{\sigma^{-1}(j)} 2 \alpha_k + \sum_{k=\sigma^{-1}(j) + 1}^{\sigma^{-1}(i)} \alpha_k,
\end{equation}
where the first sum is $0$ if $\sigma^{-1}(j) = 1$.  Since $\left< \alpha_{i,j,w}^+, \alpha_{i,j,w}^+ \right> = 1$ as well, we have
  \begin{equation}\label{eqn:dlam_pos}
 	d_\lambda(\alpha_{i,j,w}^+) = \sum_{m=1}^r ( l_m+1) 
	\left[ \sum_{k=1}^{\sigma^{-1}(j)} \frac{4 \left< \epsilon_m, \alpha_k \right>}
	{\left< \alpha_k, \alpha_k \right>} + \sum_{k=\sigma^{-1}(j)+1}^{\sigma^{-1}(i)} 
	\frac{2 \left< \epsilon_m, \alpha_k \right>}{\left< \alpha_k, \alpha_k \right>} 
	\right] = L_{\sigma^{-1}(i)} + L_{\sigma^{-1}(j)}
  \end{equation}
\end{proof}

Now let $D_i = \{ d_\lambda(\alpha) \mid \alpha \in \Phi_w^{(i)} \}$.  By Lemmas \ref{lem:Phi_w^i} and \ref{lem:dlambda}, we see that \\if $\epsilon_w^{(i)} = +1$, then 
\begin{equation}\label{eqn:Di_plus}
 D_i = \{ L_{\sigma^{-1}(j)} - L_{\sigma^{-1}(i)} \mid j<i \text{ and } \sigma^{-1}(j) > \sigma^{-1}(i) \},
\end{equation}
while if $\epsilon_w^{(i)} = -1$, then 
\begin{equation}\begin{split}\label{eqn:Di_minus}
D_i = \{ L_{\sigma^{-1}(i)} \}\  \cup \ &\{ L_{\sigma^{-1}(j)} + L_{\sigma^{-1}(i)} \mid j<i \} \\
&\cup \ \{ L_{\sigma^{-1}(i)} - L_{\sigma^{-1}(j)} \mid j<i \text{ and } \sigma^{-1}(j) < \sigma^{-1}(i) \}.
                \end{split}
\end{equation}

Now we examine the Gauss sums obtained from the $GT$-pattern $P$ with top row $L_r \, L_{r-1} \, \cdots \, L_1$ associated to $w$.  Suppose there are $m_i = m_i(P)$ maximal entries in rows $b_{r+1-i}$ and $a_{r+1-i}$ combined.  First, suppose there are no maximal entries in row $b_{r+1-i}$.  Then the first $m_i$ entries in row $a_{r+1-i}$ (reading from the left) are maximal.  Since there are $i-1$ entries in row $a_{r+1-i}$, in this case we have $m_i < i$.  We may apply equations \eqnref{eqn:rowb_pos} and \eqnref{eqn:rowa_pos} to compute the sums defining $u_{k,\ell}$ and $v_{k,\ell}$.  These sums telescope, and we have
\begin{eqnarray*}
                 v_{r+1-i\, , \, r+j-i} &=& 0, \quad \text{ for } 1 \leq j \leq i-1,\\
		u_{r+1-i\, , \, r+(j+1)-i} &=& \begin{cases} 0, &  \text{ for } m_i+1 \leq j \leq i,\\
		a_{r-i\, , \, r+j-i} - b_{r+1-i\, , \, r+(m_i+1)-i},  & \text{ for } 1 \leq j \leq m_i.\end{cases}
\end{eqnarray*}
By Proposition \ref{prop:wforP}, $b_{r+1-i\, , \, r+(m_i+1)-i} = L_{\sigma^{-1}(i)}$, so to compute $u_{r+1-i\, , \, r+(j+1)-i}$ as $j$ varies, we must determine the set of values for $a_{r-i\, , \, r+j-i}$ with $1 \leq j \leq m_i$. Recall that by Lemma \ref{lemma:rowa_i}, the entries in row $a_{r-i}$ are given by
\begin{equation}\label{eqn:row_r-i}
 \{ L_{\sigma^{-1}(j)} \mid 1 \leq j \leq i \}
\end{equation}
Since the rows are strictly decreasing, the entries appearing to the left of \\$a_{r-1\, , \, r + (m_i+1)-i} = L_{\sigma^{-1}(i)}$ have an index greater than $\sigma^{-1}(i)$.  That is,  
\begin{equation}
 \{ a_{r-i\, , \, r+j-i} \mid 1 \leq j \leq m_i \} = \{ L_{\sigma^{-1}(j)} \mid j<i \text{ and } \sigma^{-1}(j) > \sigma^{-1}(i) \}
\end{equation}
Thus the nonzero Gauss sum exponents for rows $b_{r+1-i}$ and $a_{r+1-i}$ are given by 
$u_{r+1-i\, , \, r+(j+1)-i} = L_{\sigma^{-1}(j)} - L_{\sigma^{-1}(i)}$ with $j<i \text{ and } \sigma^{-1}(j) > \sigma^{-1}(i)$.  Finally, note that $\varepsilon_w^{(i)} = +1$, since there are no maximal entries in row $b_{r+1-i}$ in this case.  Thus our set of nonzero Gauss sum exponents matches the set $D_i$ as given in \eqnref{eqn:Di_plus}.
\medbreak
Second, suppose there are maximal entries in row $b_{r+1-i}$.  Consequently, all entries in row $a_{r+1-i}$ are maximal, so there are $n_i:= m_i - i + 1$ maximal entries in row $b_{r+1-i}$.  We may apply equations \eqnref{eqn:rowa_neg} and \eqnref{eqn:rowb_neg} to compute the sums defining $u_{k,\ell}$ and $v_{k,\ell}$.  These sums telescope, and we have
\begin{eqnarray*}
                 v_{r+1-i\, , \, r+j-i} &=& \begin{cases} 0, & \text{ for } 1 \leq j \leq i-n_i,\\
                                             a_{r-i\, , \, r+1-n_i} - a_{r-i\, , \, r+(j+1)-i}, & \text{ for } i+1-n_i \leq j \leq i-1\\  a_{r-i\, , \, r+1-n_i} & \text{ for } j=i
                                            \end{cases}\\
		u_{r+1-i\, , \, r+(j+1)-i} &=& a_{r-i\, , \, r+1-n_i} + a_{r+1-i\, , \, r+(j+1)-i},  
				\; \; \; \text{ for } 1 \leq j \leq i-1.
\end{eqnarray*}
By Proposition \ref{prop:wforP}, $a_{r+1-i\, , \, r+1-n_i} = L_{\sigma^{-1}(i)}$, and thus $v_{r+1-i\, , r} = L_{\sigma^{-1}(i)}$.  To compute the remaining exponents $v_{r+1-i\, , \, r+j-i}$ as $j$ varies, we again appeal to \eqnref{eqn:row_r-i}.  Since the rows are strictly decreasing, the entries appearing to the right of $L_{\sigma^{-1}(i)}$ in row $a_{r-1}$ must have an index smaller than $\sigma^{-1}(i)$.  That is, 
\begin{equation}
 \{ a_{r-i\, , \, r+(j+1)-i} \mid i+1-n_i \leq j \leq i-1 \} = \{ L_{\sigma^{-1}(j)} \mid j<i \text{ and } \sigma^{-1}(i) > \sigma^{-1}(j) \}
\end{equation}
Thus $v_{r+1-i\, , \, r+j-i} = L_{\sigma^{-1}(i)} - L_{\sigma^{-1}(j)}$ with $i+1-n_i \leq j < i$ and $\sigma^{-1}(i) > \sigma^{-1}(j)$.  
\medbreak

To compute the exponents $u_{r+1-i\, , \, r+(j+1)-i}$, we note that by Lemma \ref{lemma:rowa_i}, the entries in row $a_{r+1-i}$ are the $L_{\sigma^{-1}(j)}$ for which $1 \leq j \leq i-1$.  Thus $u_{r+1-i\, , \, r+(j+1)-i} = L_{\sigma^{-1}(i)} + L_{\sigma^{-1}(j)}$ with  $\mid 1 \leq j \leq i-1.$  Finally, we note that $\varepsilon_w^{(i)} = -1,$ since there are maximal entries in row $b_{r+1-i}$.  Combining the cases above, we see that we match the set $D_i$ given in \eqnref{eqn:Di_minus}.  
\medbreak

This completes the proof of Theorem \ref{thm:matchsums}.
\end{proof}


\section{Comparison with the Casselman-Shalika formula}

The main focus of this section is the proof of Theorem \ref{thm:matchHK}, using a generating function identity given by Hamel and King~{\cite{hamelking}}.  This identity may be regarded as a deformation of the Weyl character formula for Sp$(2r)$, though it is stated in the language of symplectic, shifted tableaux (whose definition we will recall in this section) so we postpone the precise formulation.
Recall that our multiple Dirichlet series take the form
$$ Z_\Psi(\mathbf{s};\mathbf{m}) = \sum_{\mathbf{c}=(c_1,\ldots,c_r) \in (\mathcal{O}_S / \mathcal{O}_S^\times)^r} \frac{H^{(n)}(\mathbf{c};\mathbf{m}) \Psi(\mathbf{c})}{|c_1|^{2s_1} \cdots |c_r|^{2s_r}}. $$
In brief, we show that for $n=1$ our formulas for the prime power supported contributions of $Z_\Psi(\mathbf{s}, \mathbf{m})$ match one side of Hamel and King's identity, while the other side of the identity is given in terms of a character of a highest weight representation for Sp$(2r)$. By combining the Casselman-Shalika formula with Hamel and King's result, we will establish Theorem \ref{thm:matchHK}.

\subsection{Specialization of the multiple Dirichlet series for $n=1$} \label{specializingsection}

Many aspects of the definition $Z_\Psi$ are greatly simplified when $n=1$. First, we may take $\Psi$ to be constant, since the Hilbert symbols appearing in the definition (\ref{generalpsi}) are trivial for $n=1$. Moreover, the coefficients $H^{(n)}(\mathbf{c};\mathbf{m})$ for $n=1$ are perfectly multiplicative in both $\mathbf{c}$ and $\mathbf{m}$. That is, according to (\ref{mudef}) we have
$$ H^{(1)}(\mathbf{c} \cdot \mathbf{c}';\mathbf{m}) =  H^{(1)}(\mathbf{c};\mathbf{m}) H^{(1)}(\mathbf{c}';\mathbf{m}) \; \text{when $\gcd(c_1 \cdots c_r, c_1' \cdots c_r')=1$} $$ 
and according to (\ref{mmult}) we have
$$ H^{(1)}(\mathbf{c} ; \mathbf{m} \cdot \mathbf{m'}) =  H^{(1)}(\mathbf{c};\mathbf{m}) \; \text{when $\gcd(m_1' \cdots m_r', c_1 \cdots c_r)=1$.} $$
Hence the global definition of $Z_\Psi(\mathbf{s};\mathbf{m})$ for fixed $\mathbf{m}$ is easily recovered from its prime power supported contributions as follows:
\begin{equation} Z_\Psi(\mathbf{s};\mathbf{m}) = \prod_{p \in \mathcal{O}_S} \left[ \sum_{\mathbf{k} = (k_1,\ldots,k_r)} \frac{H^{(1)}(p^\mathbf{k};p^\mathbf{l})}{|p|^{2k_1s_1} \cdots |p|^{2k_rs_r}} \right], \label{pcontrib} \end{equation}
with $\mathbf{l} = (l_1, \cdots, l_r)$ given by $\ord_p(m_i) = l_i$ for $i=1,\ldots,r$. Note that the sum on the right-hand side runs over the finite number of vectors $\mathbf{k}$ for which $H^{(n)}(p^{\mathbf{k}};p^\mathbf{l})$ has non-zero support for fixed $\mathbf{l}$ according to (\ref{summarydef}).

We now simplify our formulas for $H^{(n)}(p^{\mathbf{k}};p^{\mathbf{l}})$ when $n=1$. As before, we set $q = | \mathcal{O}_S / p \mathcal{O}_S |$. With definitions as given in (\ref{gammablong}) and (\ref{gammaalong}), let  
$$ \tilde{\gamma}_a(i,j) := q^{-u_{i,j}} \gamma_a(i,j), \,\, and \,\, \tilde{\gamma}_b(i,j) := q^{-v_{i,j}} \gamma_b(i,j). $$
Then by analogy with the definitions (\ref{eqn:defineGP}) and (\ref{summarydef}), define
$$ \widetilde{G}(P) := \prod_{1 \leq i \leq j \leq r} \tilde{\gamma}_a(i,j) \tilde{\gamma}_b(i,j), $$
and
$$ \widetilde{H}^{(1)}(p^{\mathbf{k}};p^{\mathbf{l}}) = \widetilde{H}^{(1)}(p^{k_1}, \ldots, p^{k_r};p^{l_1}, \ldots, p^{l_r} ) := \sum_{k(P)=(k_1,\ldots,k_r)} \widetilde{G}(P), $$
where again the sum is taken over $GT$-patterns $P$ with fixed top row $(L_r, \cdots, L_1)$ as in (\ref{GTtoprow}).  By elementary properties of Gauss sums, when $n=1$ we have, for a strict $GT$-pattern $P$, 
\begin{equation} \tilde{\gamma}_a(i,j) = \begin{cases} 1 & \text{if $P$ is minimal at $a_{i,j}$}, \\
	1 - \frac{1}{q} & \text{if $P$ is generic at $a_{i,j}$},\\
	- \frac{1}{q} & \text{if $P$ is maximal at $a_{i,j}$}, \end{cases} \label{gammaareduction} \end{equation}
recalling the language of Definition~\ref{def:minmaxgen} and similarly,
\begin{equation} \tilde{\gamma}_b(i,j) = \begin{cases} 1 & \text{if $P$ is minimal at $b_{i,j}$}, \\
	 1 - \frac{1}{q} & \text{if $P$ is generic at $a_{i,j}$}, \\
	- \frac{1}{q} & \text{if $P$ is maximal at $b_{i,j}$}. \end{cases} \label{gammabreduction} \end{equation}
Note that when $P$ is generic at $a_{i,j}$ (resp. $b_{i,j}$), the condition $n \mid u_{i,j}$ (resp. $n \mid v_{i,j}$) is trivially satisfied, since $n=1$.

We claim that
\begin{equation}\label{eqn:equivH}
H^{(1)}(p^{\mathbf{k}};p^{\mathbf{l}}) = \widetilde{H}^{(1)}(p^{\mathbf{k}};p^{\mathbf{l}})\, q^{k_1+\cdots + k_r}. 
\end{equation}
This equality follows from the definitions of $H^{(1)}(p^{\mathbf{k}};p^{\mathbf{l}})$ and $\widetilde{H}^{(1)}(p^{\mathbf{k}};p^{\mathbf{l}})$, after matching powers of $q$ on each side by applying the following combinatorial lemma.

\begin{lemma}\label{lem:p^ksum}
For each $GT$-pattern $P$, 
 \begin{equation}
 \sum_{i=1}^r k_i(P) = \sum_{i=1}^r \left[ \sum_{j=i}^r v_{i,j} + \sum_{j=i+1}^r u_{i,j} \right].
 \end{equation}
 \end{lemma}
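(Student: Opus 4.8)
The plan is to exploit that both sides of the claimed identity are $\mathbb{Z}$-linear functionals of the pattern entries $a_{i,j}$ and $b_{i,j}$ (the top-row entries $a_{0,m}=L_{r+1-m}$ being fixed, but still entering linearly --- via $s_a(0)$ on the left and via the $v_{1,j}$ on the right). Hence it suffices to verify that the coefficient of each entry is the same on both sides; no hypothesis on the shape of $P$ enters, which is why the identity holds for every $GT$-pattern.

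I would first unwind the right-hand side. Interchanging the order of summation yields $\sum_{j=i}^r v_{i,j} = \sum_{m=i}^r (r-m+1)(a_{i-1,m}-b_{i,m})$, and, writing $u_{i,j}=v_{i,r}+w_{i,j}$ as in \eqnref{def:uv}, $\sum_{j=i+1}^r u_{i,j} = (r-i)v_{i,r} + \sum_{m=i+1}^r (m-i)(a_{i,m}-b_{i,m})$, where $v_{i,r}=\sum_{m=i}^r(a_{i-1,m}-b_{i,m})$. Collecting the contributions of a fixed entry, the $m$-dependence cancels: the coefficient of $b_{i,m}$ equals $-(r-m+1)-(r-i)-(m-i)=-(2(r-i)+1)$ for $i<m$, and likewise $-(2(r-i)+1)$ for $m=i$; the coefficient of $a_{i,m}$ with $1\le i\le r-1$ is the row-constant $(r-m+1)+(r-i-1)+(m-i)=2(r-i)$; and the coefficient of the top-row entry $a_{0,m}$ is $(r-m+1)+(r-1)=2r-m$. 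Thus the right-hand side equals $\sum_{m=1}^r(2r-m)a_{0,m} + \sum_{i=1}^{r-1}2(r-i)s_a(i) - \sum_{i=1}^r(2(r-i)+1)s_b(i)$.

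Next I would put the left-hand side $\sum_{i=1}^r k_i(P)$ into the same shape using \eqnref{def:ki}. Separating the $i=1$ term, substituting $t=r+1-i$ in the terms with $2\le i\le r$ and interchanging the orders of the resulting double sums, one absorbs the correction terms $-s_a(r+1-i)$, the partial top-row sums $\sum_{m=1}^{r+1-i}a_{0,m}$, and the empty-sum convention $s_a(r)=0$; the net effect is that $s_b(m)$ acquires coefficient $-(2(r-m)+1)$, that $s_a(m)$ for $m\ge1$ acquires coefficient $2(r-m)$, and that $a_{0,m}$ acquires coefficient $2r-m$ --- precisely the expression obtained for the right-hand side. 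Comparing term by term completes the proof.

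The real content is bookkeeping rather than ideas: the left-hand manipulation has several off-by-one hazards --- the special form of $k_1$ versus $k_i$ for $i>1$, the sign and range of the correction terms $-s_a(r+1-i)$, the empty-sum convention $s_a(r)=0$, and the partial top-row sums --- so the work is in keeping the index ranges straight when interchanging sums. I would check $r=1$ and $r=2$ explicitly first, both to fix the index conventions and to watch the coefficient pattern (namely $2r-m$ on the top row, $2(r-i)$ on row $a_i$, and $-(2(r-i)+1)$ on row $b_i$) emerge before running the general argument.
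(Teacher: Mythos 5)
Your proposal is correct and follows essentially the same route as the paper's proof: both sides are expanded as $\mathbb{Z}$-linear combinations of the entries $a_{i,j}$, $b_{i,j}$ by interchanging orders of summation, and the resulting coefficients ($2r-m$ on $a_{0,m}$, $2(r-i)$ on row $a_i$, and $-(2(r-i)+1)$ on row $b_i$) are matched term by term. Your explicit coefficient computations for the right-hand side agree with the paper's equation \eqref{eqn:ksum}, and the left-hand side reduction you outline is exactly the paper's.
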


 \begin{proof}
We proceed by expanding each side in terms of the entries $a_{i,j}$ and $b_{i,j}$ in the $GT$-pattern $P$, using the definitions above.  Applying \eqnref{def:ki}, we have
\begin{multline*}
\sum_{i=1}^r k_i(P) = \left[ r\, s_a(0) + \sum_{m=1}^{r-1} s_a(m) + \sum_{i=2}^r \sum_{m=1}^{r+1-i}(2\,s_a(m) + a_{0,m}) - \sum_{i=2}^rs_a(r+1-i) \right] \\
 - \left[ \sum_{m=1}^r s_b(m) + \sum_{i=2}^r \sum_{m=i}^{r+1-i} 2\,s_b(m) \right].
\end{multline*}
Since $\sum_{i=2}^rs_a(r+1-i) = \sum_{m=1}^{r-1}s_a(m)$, the corresponding terms in the first bracket cancel.  After interchanging order of summation and evaluating sums over $i$, we obtain
\begin{equation*}
\sum_{i=1}^r k_i(P) = r\, s_a(0) + \sum_{m=1}^r(r-m)\,a_{0,m} + \sum_{m=1}^{r-1} 2(r-m)\, s_a(m) - \sum_{m=1}^r (1+2(r-m))\, s_b(m).
\end{equation*}
Finally, applying \eqnref{def:si} and combining the first two terms, we conclude that
\begin{equation}\label{eqn:ksum}
\sum_{i=1}^r k_i(P) = \sum_{m=1}^r (2r-m)\, a_{0,m} + \sum_{m=1}^{r-1} \sum_{\ell=m+1}^r 2(r-m)\, a_{m,\ell} - \sum_{m=1}^r \sum_{\ell=m}^r (1+2(r-m))\, b_{m,\ell}.
\end{equation}
On the other hand, from \eqnref{def:uv}, after recombining  terms we have
\begin{multline*}
\sum_{i=1}^r \left[ \sum_{j=i}^r v_{i,j} + \sum_{j=i+1}^r u_{i,j} \right]   = \,\,- \sum_{i=1}^r \left[ b_{i,i} + \sum_{j=i+1}^r \Big(b_{i,j} + 2\sum_{m=i}^r b_{i,m}\Big)\right] \\
+\sum_{i=1}^r \left[ a_{i-1,i} + \sum_{j=i+1}^r \Big( \sum_{m=i}^j \, 2\, a_{i-1,m} + \sum_{m=j+1}^r a_{i-1,m} + \sum_{m=j}^r a_{i,m} \Big) \right]. 
\end{multline*}
After interchanging order of summation and evaluating sums on $j$, this equals 
\begin{align*}
\sum_{i=1}^r &\left[ (1+2(r-i))\, a_{i-1,i} + \sum_{m=i+1}^r (2r+1-(i+m))\, a_{i-1,m} + \sum_{m=i+1}^r (m-i)\, a_{i,m} \right] \\
&-\sum_{i=1}^r \sum_{m=i}^r (1+2(r-i))\, b_{i,m}.
\end{align*}
The $i=1$ terms from the the bracket's first two summands give $\sum_{m=1}^r (2r-m)\, a_{0,m}$, the first term in \eqnref{eqn:ksum}.  After reindexing, the remaining terms in the bracket give $\sum_{i=1}^{r-1} \sum_{m=i+1}^r 2(r-i)\, a_{i,m}$.  Relabeling indices where needed gives the result.  
 \end{proof}

We now manipulate the prime-power supported contributions to the multiple Dirichlet series as in (\ref{pcontrib}). Setting $y_i = |p|^{-2s_i}$ for $i=1,\ldots,r$ and using \eqnref{eqn:equivH}, we have
\begin{equation} \label{firstprep}
	\sum_{\mathbf{k} = (k_1,\ldots,k_r)} \frac{H^{(1)}(p^{k_1}, \ldots, p^{k_r})}{|p|^{2k_1s_1}  \cdots |p|^{2k_rs_r}} 
=  \sum_{\mathbf{k} = (k_1,\ldots,k_r)} \widetilde{H}^{(1)}(p^{k_1}, \ldots, p^{k_r})\, (q y_1)^{k_1} \cdots (q y_r)^{k_r}. \end{equation}
After making the following change of variables:
$$ q y_1 \mapsto x_1^2, \quad q y_2 \mapsto x_1^{-1}x_2, \quad \ldots \quad q y_r \mapsto x_{r-1}^{-1} x_r , $$
the right-hand side of (\ref{firstprep}) becomes
$$ \sum_{(k_1,\ldots,k_r)} \widetilde{H}^{(1)}(p^{k_1}, \ldots, p^{k_r})\, x_1^{2k_1} (x_1^{-1}x_2)^{k_2} \cdots (x_{r-1}^{-1}x_r)^{k_r}. $$
By the relationship between the $k_i$~-coordinates and the weight coordinates $\wgt_i$ given in (\ref{eqn:kweights}), this is just
$$ x_1^{L_1} \cdots x_r^{L_r} \sum_{(k_1,\ldots,k_r)} \widetilde{H}^{(1)}(p^{k_1}, \ldots, p^{k_r})\, x_1^{\wgt_1} \cdots x_r^{\wgt_r}, $$
where the $L_i$ relate to $l_i$ as in (\ref{GTtoprow}). Finally, letting 
$$\gen(P) = \#\{ \text{generic entries in $P$}\}, \text{ and } \max(P) = \#\{ \text{maximal entries in $P$}\} $$ and using the simplifications for $n=1$ in (\ref{gammaareduction}) and (\ref{gammabreduction}) for $\widetilde{H}^{(1)}$ in terms of $\widetilde{G}(P)$, then
\begin{multline} \sum_{\mathbf{k} = (k_1,\ldots,k_r)} \frac{H^{(1)}(p^{k_1}, \ldots, p^{k_r})}{|p|^{2k_1s_1}  \cdots |p|^{2k_rs_r}} \\
 = x_1^{L_1} \cdots x_r^{L_r} \sum_{(k_1,\ldots,k_r)} \left(\frac{-1}{q} \right)^{\max(P)} \left(1-\frac{1}{q} \right)^{\gen(P)} x_1^{\wgt_1} \cdots x_r^{\wgt_r}, \label{eqn:HKprep} \end{multline}
with the $x_i$'s given in terms of $|p|^{-2s_i}$ by the composition of the above changes of variables. The right-hand side of (\ref{eqn:HKprep}) is now amenable to comparison with the identity of Hamel and King.

\subsection{Symplectic Shifted Tableaux}

In order to state the main theorem of Hamel and King (\cite{hamelking}), we must first introduce some additional terminology.  To each strict $GT$-pattern $P$, we may associate an Sp$(2r)$-{\em standard shifted tableau} of shape $\lambda + \rho$.  Below, we follow Hamel and King~{\cite{hamelking}, specializing Definition 2.5 to our circumstances.   We consider the partition $\mu$ of $\lambda + \rho$, whose parts are given by $\mu_i = l_1 + \cdots l_i + r-i+1$, for $i = 1, \ldots, r$.  (These are simply the entries in the top row of the pattern $P$ in $GT(\lambda + \rho)$.)  Such a partition defines a {\em shifted Young diagram} constructed as follows:  $|\mu|$ boxes are arranged in $r$ rows of lengths $\mu_1, \mu_2, \ldots, \mu_r$, and the rows are left-adjusted along a diagonal line.  For instance, if $\mu = (7,4,2,1)$, then our tableau has shape 
$$ \young(\,\,\,\,\,\,\,,:\,\,\,\,,::\,\,,:::\,) $$

It remains to define how the tableau is to be filled.  
The alphabet will consist of the set $A = \{1, 2, \ldots, r\} \cup \{ \overline{1}, \overline{2}, \ldots \, \overline{r} \}$, with ordering $\overline{1} < 1 < \overline{2} < 2 < \cdots < \overline{r} < r$.  We place an entry from $A$ in each of the boxes of the tableau so that the entries are: (1) {\em weakly increasing} from left to right across each row and from top to bottom down each column, and (2) {\em strictly increasing} from top-left to bottom-right along each diagonal.

An explicit correspondence between Sp$(2r)$-standard shifted tableaux and strict $GT$-patterns is given in Definition 5.2 of~\cite{hamelking}.  Below we describe the prescription for determining $S_P$, the tableau corresponding to a given $GT$-pattern $P$, with notation as in \eqnref{eqn:GTpattern}.
\begin{enumerate}
\item For $j=i, \ldots, r$, the entries $a_{i-1,j}$ of $P$ count, respectively, the number of boxes in the $(j-i+1)^{st}$ row of $S_P$ whose entries are less than or equal to the value $r-i+1$.
\item For $j=i, \ldots, r$, the entries $b_{i,j}$ of $P$ count, respectively, the number of boxes in the $(j-i+1)^{st}$ row of $S_P$ whose entries are less than or equal to the value $\overline{r-i+1}$.
\end{enumerate}
An example of this bijection is given in Figure~\ref{gttabbij}.

\begin{figure}[h!]
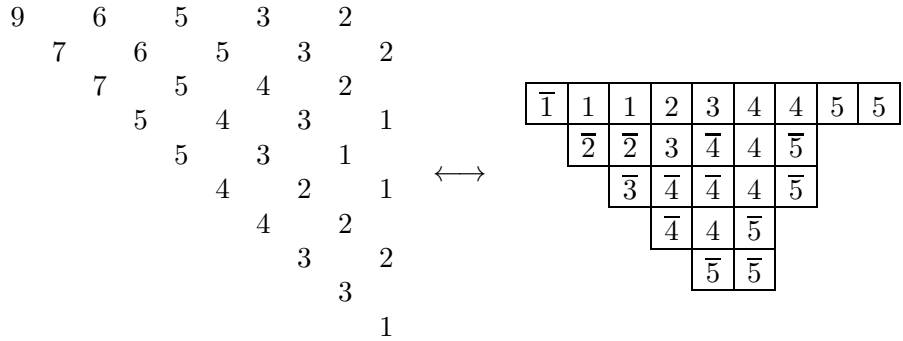

\Yautoscale0
\Yboxdim16pt

\begin{center} 
{\small \begin{tabular}{ccc}
 $ \begin{array}{cccccccccc}
		9 & & 6 & & 5 & & 3 & & 2\\
		& 7 & & 6 & & 5 & & 3 & & 2\\
		& & 7 & & 5 & & 4 & & 2\\
		& & & 5 & & 4 & & 3 & & 1\\
		& & & & 5 & & 3 & & 1\\
		& & & & & 4 & & 2 & & 1\\
		& & & & & & 4 & & 2\\
		& & & & & & & 3 & & 2\\
		& & & & & & & & 3\\
		& & & & & & & & & 1
		\end{array} $ &   
 $ \longleftrightarrow $ & $ \begin{array}{c} \\ \young(\onebar 11234455,:\twobar\twobar 3\fourbar 4\fivebar,::\threebar\fourbar\fourbar 4\fivebar,:::\fourbar 4\fivebar,::::\fivebar\fivebar) \end{array}$
\end{tabular} }
\caption{\small The bijection between $GT$-patterns and symplectic shifted tableaux.} 
\label{gttabbij} 
 \end{center}
 \end{figure}
 Moreover, we associate the following statistics to any symplectic shifted tableau~$S$:
\begin{enumerate}
\item $\wgt(S) = (\wgt_1(S), \wgt_2(S), \ldots, \wgt_r(S))$, where $\wgt_i(S)$ = \# ($i$ entries) - \# ($\overline{i}$ entries).
\item $\con_k(S)$ is the number of connected components of the ribbon strip of $S$ consisting of all the entries $k$.
\item $\row_k(S)$ is the number of rows of $S$ containing an entry $k$, and similarly $\row_{\kbar}(S)$ is the number of rows of $S$ containing an entry $\kbar$.
 \item $\str(S)$ is the total number of connected components of all ribbon strips of $S$.
 \item $\barred(S)$ is the total number of barred entries in $S$.
 \item $\dsp \height(S) = \sum_{k=1}^r (\row_k(S) - \con_k(S) - \row_{\kbar}(S)$).
  \end{enumerate}
It is easy to see that the weights associated with the tableaux $S_P$ are identical to the previously-defined weights associated with the pattern $P$.

The main result of Hamel and King~\cite{hamelking} is the following identity:

\begin{nntheorem}[Hamel-King] Let $\lambda$ be a partition into at most $r$ parts, and let $\rho=(r, r-1, \ldots, 1)$. Then defining
\begin{equation} \label{eqn:HKDdef} D_{Sp(2r)}(\mathbf{x};t) = \prod_{i=1}^r x_i^{r-i+1} \prod_{i=1}^r (1+tx_i^{-2}) \prod_{1 \leq i < j \leq r} (1+tx_i^{-1}x_j)(1+tx_i^{-1}x_j^{-1}),
\end{equation}
and letting $sp_{\lambda}(\mathbf{x}) := sp_{\lambda}(x_1, \ldots, x_r)$ be the character of the highest weight representation of $Sp(2r)$ with highest weight $\lambda$, we have
\begin{equation}\label{eqn:HKidentity2}
D_{Sp(2r)}(t\mathbf{x};t) sp_\lambda(\mathbf{x}) = \sum_{S \in \mathcal{ST}^{\lambda+\rho}(Sp(2r))} t^{\height(S)+r(r+1)/2} (1+t)^{\str(S)-r} \mathbf{x}^{\wgt(S)}.
\end{equation}
where $\mathcal{ST}^{\lambda+\rho}(Sp(2r))$ denotes the set of all Sp$(2r)$-standard shifted tableaux of shape $\lambda + \rho$.
\end{nntheorem}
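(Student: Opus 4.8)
The plan is to deduce the Hamel--King identity \eqnref{eqn:HKidentity2} from the Weyl character formula for $Sp(2r)$ together with a non-intersecting lattice-path model for symplectic shifted tableaux. Write $\rho = (r, r-1, \ldots, 1)$ and recall the bialternant formula
\[
 sp_\lambda(\mathbf{x}) = \frac{\det\bigl(x_j^{\lambda_i+r-i+1}-x_j^{-(\lambda_i+r-i+1)}\bigr)_{1\le i,j\le r}}{\det\bigl(x_j^{r-i+1}-x_j^{-(r-i+1)}\bigr)_{1\le i,j\le r}},
\]
whose denominator equals, up to a monomial and a sign, the $C_r$ Weyl denominator $a_\rho(\mathbf{x}) = \prod_i(x_i-x_i^{-1})\prod_{i<j}(x_i-x_j)(x_ix_j-1)$. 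A direct computation rewrites $D_{Sp(2r)}(t\mathbf{x};t)$ as $\prod_i x_i^{r-i+1}\prod_i(t+x_i^{-2})\prod_{i<j}(1+tx_i^{-1}x_j)(t+x_i^{-1}x_j^{-1})$, exhibiting it as a $t$-deformation of $a_\rho$ that recovers $a_\rho$ (up to a monomial and sign) at $t=-1$. The goal is therefore to show that the right-hand side of \eqnref{eqn:HKidentity2} equals a single $r\times r$ determinant which factors as $D_{Sp(2r)}(t\mathbf{x};t)$ times the bialternant numerator of $sp_\lambda$.

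The first step is to encode each $S\in\mathcal{ST}^{\lambda+\rho}(Sp(2r))$ as a family of $r$ vertex-disjoint lattice paths $(\pi_1,\dots,\pi_r)$, where the $i$-th path starts at a source indexed by $\lambda_i+r-i+1$, ends on a fixed boundary line, and has steps labelled from the ordered alphabet $A=\{\overline 1<1<\cdots<\overline r<r\}$; a horizontal step at level $k$ is weighted by $x_k$ and one at level $\overline k$ by $x_k^{-1}$, so that the product of edge weights over the family is $\mathbf{x}^{\wgt(S)}$. The row-weak and column-strict conditions on $S$ become the standard monotonicity of path labels, and since the shape $\lambda+\rho$ is strictly decreasing the diagonal-strictness condition translates exactly into disjointness of the paths, which is the regime where the Lindstr\"om--Gessel--Viennot lemma applies with no boundary corrections (the sign changes in $W(C_r)$ being absorbed by allowing a path to ``reflect'' off the wall $x_k\leftrightarrow x_k^{-1}$). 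The remaining, and most delicate, point is to build the $t$-statistics into this model: a maximal horizontal run of $m$ equal entries in a row should be assigned a weight in which each of its $m-1$ internal break-points between consecutive rows contributes a factor $1+t$, arranged so that the total collected over all runs is $t^{\height(S)+r(r+1)/2}(1+t)^{\str(S)-r}$, the shift by $r$ accounting for the $r$ obligatory ribbon components forced by the staircase.

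Granting the model, the Lindstr\"om--Gessel--Viennot lemma expresses $\sum_{S}(\cdots)$ as $\det\bigl(G_i(x_j)\bigr)_{1\le i,j\le r}$, where $G_i(x_j)$ is the one-variable generating function of a single admissible path from the $i$-th source, obtained by summing a geometric-type series over label sequences. One expects $G_i(x_j)=\Pi(x_j)\cdot\bigl(x_j^{\lambda_i+r-i+1}-x_j^{-(\lambda_i+r-i+1)}\bigr)$, with the product of binomials $\Pi(x_j)$ independent of $i$; pulling $\Pi(x_j)$ out of column $j$ then produces precisely $D_{Sp(2r)}(t\mathbf{x};t)$ (the monomial prefactor $\prod_i x_i^{r-i+1}$ being absorbed into the choice of sources), while what remains is the bialternant numerator of $sp_\lambda$. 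Dividing and multiplying by the Weyl denominator $a_\rho$ then gives \eqnref{eqn:HKidentity2}.

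The main obstacle is translating the genuinely global statistics $\str(S)$ and $\height(S)=\sum_k(\row_k(S)-\con_k(S)-\row_{\overline k}(S))$ into local, multiplicative edge weights: unlike $\wgt$, these are not additive over the boxes of $S$, so the model must be designed so that the number of connected components of each horizontal ribbon strip, and the number of rows meeting each value $k$ and $\overline k$, are recoverable from purely local break-point data and telescope correctly between consecutive rows; matching the exponent of $1+t$ as $\str(S)-r$ rather than $\str(S)$ is exactly where the bookkeeping of the $r$ staircase-forced components enters. Should the direct path model prove unwieldy, a workable alternative is induction on $r$ via the branching rule $Sp(2r)\downarrow GL_1\times Sp(2r-2)$: deleting the entries $r$ and $\overline r$ from $S$ yields a symplectic shifted tableau for a rank-$(r-1)$ shape, and one checks that $\wgt$, $\str$ and $\height$ restrict predictably so that the inductive step collapses to a single one-variable summation---but establishing the compatibility of $\str$ and $\height$ with the branching is essentially the same bookkeeping difficulty in another guise.
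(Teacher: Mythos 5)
You should first be aware of how the paper itself handles this statement: it is not proved there from scratch. The result is Theorem 1.2 of Hamel and King, quoted in a slightly modified form, and the paper's entire argument consists of the substitution $x_i \mapsto t x_i$ applied to Hamel--King's original identity
$D_{Sp(2r)}(\mathbf{x};t)\, sp_\lambda(\mathbf{x};t) = \sum_S t^{\height(S)+2\barred(S)}(1+t)^{\str(S)-r}\mathbf{x}^{\wgt(S)}$,
combined with the bookkeeping identities $\sum_i \wgt_i(S) = \tfrac{r(r+1)}{2} - 2\barred(S) + \sum_i (r-i+1)l_i$ and $sp_\lambda(t\mathbf{x};t) = t^{\sum(r-i+1)l_i}\, sp_\lambda(\mathbf{x})$. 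Your proposal instead sets out to reprove the underlying combinatorial identity itself, which is a far more ambitious task, and as written it does not get there.

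The gaps are concrete. First, the heart of the identity is exactly the point you defer: converting the global statistics $\str(S)$ and $\height(S)=\sum_k(\row_k(S)-\con_k(S)-\row_{\overline{k}}(S))$ into local multiplicative edge weights. You name this as ``the most delicate point'' and then proceed ``granting the model''; nothing in the proposal actually exhibits weights whose product over a tableau is $t^{\height(S)+r(r+1)/2}(1+t)^{\str(S)-r}$, and this is precisely where all the combinatorial content of the theorem lives. Second, the determinantal step is misstated. A Lindstr\"om--Gessel--Viennot argument on the path model you describe produces a determinant (or, in the symplectic setting with reflections off the wall, a Pfaffian or a determinant only after a nontrivial Okada/Stembridge-type involution --- the claim that the lemma applies ``with no boundary corrections'' is not true as stated) whose entries are generating functions in \emph{all} of $x_1,\ldots,x_r$, since a single path traverses every level $k,\overline{k}$. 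Your proposed entries $G_i(x_j)$, each depending on the single variable $x_j$, would yield a bialternant, but that is not what the path model delivers, and the factorization $G_i(x_j)=\Pi(x_j)\bigl(x_j^{\lambda_i+r-i+1}-x_j^{-(\lambda_i+r-i+1)}\bigr)$ is only ``expected,'' not derived. Until the local weighting and the correct determinantal or Pfaffian evaluation are supplied, the proposal is a plan rather than a proof; a self-contained argument should follow Hamel--King's own route through $U$-turn lattice paths and the symplectic analogue of the LGV lemma, or else one should simply cite their Theorem 1.2 and perform the substitution, as the paper does.
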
 

\begin{remark} This is a slight variant from the version appearing as Theorem 1.2 of~\cite{hamelking}. They express their identity in the form:
\begin{equation}\label{eqn:HKidentity}
D_{Sp(2r)}(\mathbf{x};t) sp_\lambda(\mathbf{x};t) = \sum_{S \in \mathcal{ST}^{\lambda+\rho}(Sp(2r))} t^{\height(S)+2\barred(S)} (1+t)^{\str(S)-r} \mathbf{x}^{\wgt(S)},
\end{equation}
where $sp_\lambda(\mathbf{x};t)$ is a simple deformation of the usual symplectic character given in (1.13) of~\cite{hamelking}.

In order to relate (\ref{eqn:HKidentity}) to~(\ref{eqn:HKidentity2}), put $x_i \rightarrow tx_i$ for each $i = 1, \ldots, r$, which introduces a factor of $t^{\sum \wgt_i(S)}$ on the right-hand side.  
From the definition of $\wgt(S)$ and the correspondence with $P$, we see that
\begin{equation}
\sum_{i=1}^r \wgt_i(S) = \frac{r(r+1)}{2} - 2 \barred(S) + \sum_{i=1}^r (r-i+1) l_i.
\end{equation}
Moreover, it is a simple exercise to show that 
\begin{equation}
sp_\lambda(t\mathbf{x};t) = t^{\sum (r-i+1)l_i}\, sp_\lambda(\mathbf{x}).
\end{equation}
Applying the previous two identities to (\ref{eqn:HKidentity}) gives the theorem as stated above.
\end{remark}

We now show that the right-hand side of  \eqnref{eqn:HKidentity2} may be expressed in terms of the right-hand side of \eqnref{eqn:HKprep}, leading to an expression for the generating function for $H(p^{k_1}, \ldots, p^{k_r})$ in terms of a symplectic character.
The following lemma relates the exponents in this equation back to our $GT$-pattern $P$ and the statistics of \eqnref{eqn:HKprep}.   

\begin{lemma}\label{lem:HKstats}
Let $P$ be a strict $GT$-pattern of rank $r$ and $S_P$ its associated standard shifted tableau.  Then we have the following relationships:  
\begin{enumerate}
 \item [(a)] $\gen(P) = \str(S_P) - r$,
 \item [(b)] $\max(P) = \height(S_P) + \frac{r(r+1)}{2}$.
\end{enumerate}
\end{lemma}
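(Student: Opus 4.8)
The plan is to translate each side of the two claimed identities into statements about the filling of $S_P$ via the explicit bijection of Zhelobenko/Hamel--King recalled above, namely that the entries $a_{i-1,j}$ (resp.\ $b_{i,j}$) count boxes in the $(j-i+1)^{\mathrm{st}}$ row of $S_P$ with value $\le r-i+1$ (resp.\ $\le \overline{r-i+1}$). Under this correspondence, ``minimal,'' ``maximal,'' and ``generic'' at a pattern entry (Definition~\ref{def:minmaxgen}) each acquire a concrete meaning: for instance, $P$ minimal at $b_{i,j}$ (i.e.\ $b_{i,j}=a_{i-1,j}$) means no box of that row gets the value $\overline{r-i+1}$, while $P$ maximal at $b_{i,j}$ means every box of that row whose value was $\le a_{i-1,j+1}$ now jumps past $\overline{r-i+1}$, and similarly for the $a$-entries with the unbarred letter $r-i+1$. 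The first step is to fix, once and for all, this dictionary between the four cases of $\gamma_a,\gamma_b$ and the ``how many boxes of a fixed row receive a fixed letter'' data of $S_P$.

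For part~(a), I would argue that a generic entry of $P$ corresponds exactly to an ``interior'' break in a ribbon strip: each letter $k$ (barred or unbarred) of the alphabet $A$ contributes one ribbon strip, and $\str(S_P)$ counts its connected components summed over all $2r$ letters; a ribbon strip for a letter has more than one component precisely when that letter is ``genuinely interrupted,'' which I expect to match the generic condition $a_{i-1,j}<b_{i,j}<a_{i-1,j+1}$ (resp.\ the analogous one for the $a$-row). Since each of the $2r$ letters contributes at least one component, $\str(S_P)-r$ should count exactly the ``extra'' components, one for each generic entry, giving $\gen(P)=\str(S_P)-r$. The cleanest way to make this precise is to set up a bijection between generic entries of $P$ and connected components of ribbon strips beyond the first for each letter; I would prove this by induction on the rows of $P$, peeling off one row-pair at a time and tracking how adding the boxes for the letters $r-i+1$ and $\overline{r-i+1}$ either starts a new strip (the ``first component,'' contributing to the $-r$), extends an existing one (minimal/maximal, no new component), or creates an isolated new component within an already-started strip (generic).

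For part~(b), I would similarly expand $\height(S_P)=\sum_{k=1}^r\bigl(\row_k(S_P)-\con_k(S_P)-\row_{\overline k}(S_P)\bigr)$ box by box and compare with $\max(P)=\#\{\text{maximal entries}\}$. Here the constant $r(r+1)/2$ is the bookkeeping term: it equals $\sum_{k=1}^r k$ and should absorb the contributions of the ``boundary'' maximal entries $b_{i,r}=0$ (there are $r$ of these, one per $i$) together with the diagonal/row-count discrepancies built into the definition of $\height$. So the identity to prove is really $\max(P)-r(r+1)/2=\sum_k(\row_k-\con_k-\row_{\overline k})$, which I would again verify by the same inductive peeling argument, noting at each step that a maximal $a$- or $b$-entry either increments $\row_k$ without incrementing $\con_k$ (contributing $+1$ to height) or, in the barred case, decrements the effective count via the $-\row_{\overline k}$ term, and that minimal and generic entries contribute $0$ net. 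The main obstacle, I expect, is getting the indexing and sign bookkeeping exactly right in this induction — in particular correctly handling the barred letters (which enter $\height$ with a minus sign through $\row_{\overline k}$) and the boundary entries $b_{i,r}$, and making sure the ``first component per letter'' accounting in part~(a) meshes with the row-versus-component accounting in part~(b). Once the dictionary of the first paragraph is nailed down, both (a) and (b) should follow from a single careful case analysis on what each pattern entry does to the tableau, so I would try to prove a combined combinatorial lemma about $S_P$ rather than treating (a) and (b) separately.
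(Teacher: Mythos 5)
Your overall strategy --- translate each pattern entry's minimal/maximal/generic status into a local statement about which boxes of $S_P$ receive the letters $r-i+1$ and $\overline{r-i+1}$, then do a row-by-row peeling/induction --- is the same skeleton the paper uses (induction on rank for (a), a per-entry case analysis for (b)). But two of the concrete combinatorial claims your bookkeeping rests on are wrong, and they are exactly the points where the proof has content. First, in part (a) you assert that ``each of the $2r$ letters contributes at least one component,'' and conclude that $\str(S_P)-r$ counts the extras. This is inconsistent on its face (a baseline of one component per letter over $2r$ letters would give $\str(S_P)\geq 2r$, not $r$), and it is also false: a barred letter $\kbar$ need not appear in $S_P$ at all (e.g.\ when the relevant $b_{i,j}$ are maximal, as in every stable pattern), and components of a single unbarred letter can merge across adjacent rows. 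The correct statement --- that when $\gen(P)=0$ the total number of components is exactly $r$, and that each generic entry adds exactly one --- is precisely what the paper proves by induction on the rank, checking the base case $r=2$ by hand and peeling off the $i=1$ row-pair in the inductive step. Your sketch assumes this rather than proving it.

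Second, in part (b) you claim the constant $\frac{r(r+1)}{2}$ ``absorbs the contributions of the boundary maximal entries $b_{i,r}=0$ (there are $r$ of these, one per $i$).'' There are not $r$ of these: $b_{i,r}=0$ is the \emph{condition} for $b_{i,r}$ to be maximal and may fail for every $i$. The constant instead comes from the a priori bounds $0\leq \row_k(S_P)-\con_k(S_P)\leq k-1$ and $0\leq \row_{\kbar}(S_P)\leq k$, which give $-\frac{r(r+1)}{2}\leq\height(S_P)\leq \frac{r(r-1)}{2}$, with the lower bound attained by the all-minimal pattern. Your description of the mechanism is also off: a maximal entry never ``increments $\row_k$ without incrementing $\con_k$.'' What actually happens (and what the paper verifies case by case) is that a maximal $a_{i,j}$ or a maximal $b_{i,r}$ removes the barred letter $\overline{r+1-i}$ from a row, decreasing $\sum\row_{\kbar}$ by $1$, while a maximal $b_{i,j}$ with $j<r$ forces the $(r+1-i)$-entries of two consecutive rows to form a single connected ribbon component (using strictness of $P$ to guarantee both rows actually contain that letter), decreasing $\sum\con_k$ by $1$; either way $\height(S_P)$ goes up by exactly $1$. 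Until the dictionary in your first paragraph is actually written down and these two countings are corrected, the argument does not close.
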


This is stated without proof implicitly in Corollary 5.3 in \cite{hamelking}, using slightly different notation. The proof is elementary, but we include it in the next section for completeness. Assuming the lemma, letting $t = - \frac{1}{q}$ in \eqnref{eqn:HKidentity2}, and using \eqnref{eqn:HKprep} with $|p|=q$ we see that  
\begin{multline}
\sum_{(k_1,\ldots,k_r)} H(p^{k_1}, \ldots, p^{k_r}) q^{-2k_1s_1} \cdots  q^{-2k_rs_r}\\ = x_1^{L_1} \cdots x_r^{L_r} \, D_{Sp(2r)}(-x_1/q, \ldots, -x_r/q;-1/q) \, sp_\lambda(x_1, \ldots, x_r), \label{almostthere}
\end{multline}
with the identification
\begin{equation} q^{1-2s_1} = x_1^2, \quad q^{1-2s_2} = x_1^{-1}x_2, \quad \ldots \quad q^{1-2s_r} = x_{r-1}^{-1} x_r. \label{satake} \end{equation}
One checks by induction on the rank $r$, that with $x_i$ assigned as above,
$$ x_1 x_2^2 \cdots x_r^r D_{Sp(2r)}(-x_1/q, \ldots, -x_r/q;-1/q) = \prod_{\alpha \in \Phi^+} \left( 1 - q^{-(1+2B(\alpha, \mathbf{s} - \frac{1}{2} \rho^\vee))} \right) $$
with $B(\alpha, \mathbf{s} - \frac{1}{2} \rho^\vee)$ as defined in (\ref{bilinearform}). Moving this product to the left-hand side of (\ref{almostthere}), we may rewrite our above equality as
\begin{multline}  \prod_{\alpha \in \Phi^+} \left( 1 - q^{-(1+2B(\alpha, \mathbf{s} - \frac{1}{2} \rho^\vee))} \right)^{-1} \sum_{(k_1,\ldots,k_r)} H(p^{k_1}, \ldots, p^{k_r}) q^{-2k_1s_1} \cdots  q^{-2k_rs_r} \\ = x_1^{L_1-1} \cdots x_r^{L_r-r} \, sp_\lambda(x_1, \ldots, x_r). \label{justaboutdone} \end{multline}
Note that the terms in the product are precisely the Euler factors for the normalizing zeta factors of $Z_\Psi^{\ast}(\mathbf{s}; \mathbf{m})$ defined in (\ref{zstardefined}) for the case $n=1$. Hence, the terms on the left-hand side of the above equality constitute the complete set of terms in the multiple Dirichlet series $Z_\Psi^{\ast}(\mathbf{s}; \mathbf{m})$ supported at monomials of the form $|p|^{-k_1s_1 - \cdots - k_rs_r}.$ Finally, we can state our second main result.

\begin{bodytheorem} Let $\mathbf{m} = (m_1, \ldots, m_r) \in \mathcal{O}_S$ with $m_i$ non-zero for all $i$. For each prime $p \in \mathcal{O}_S$, let $\ord_p(m_i) = l_i$. Let $H^{(n)}(p^{k_1}, \ldots, p^{k_r}; p^{l_1}, \ldots, p^{l_r})$ with $n=1$ be defined as in Section \ref{specializingsection}. Then the resulting multiple Dirichlet series $Z_\Psi^{\ast}(\mathbf{s}; \mathbf{m})$ agrees with the $(m_1, \ldots, m_r)^{th}$ Fourier-Whittaker coefficient of a minimal parabolic Eisenstein series on $SO_{2r+1}(F_S).$
\end{bodytheorem}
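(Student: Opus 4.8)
The plan is to recognize the Euler product \eqnref{justaboutdone} as the product of \emph{local} Whittaker factors occurring in the classical computation of the Fourier--Whittaker coefficient of a Borel (minimal parabolic) Eisenstein series on $SO_{2r+1}$, the identification of those factors being supplied by the Casselman--Shalika formula. All of the genuinely combinatorial work has already been carried out in reaching \eqnref{justaboutdone}; what remains is to connect it to representation theory of the dual group and to reconcile normalizations.

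First I would recall the standard unfolding of the $\mathbf m$-th Fourier--Whittaker coefficient $\mathcal W_{\mathbf m}(\mathbf s)$ of the minimal parabolic Eisenstein series on $SO_{2r+1}(F_S)$ attached to the unramified principal series with spectral parameter $\mathbf s$. Exactly as in the type $A$ computation of \cite{eisencon}, this coefficient factors over the places of $F_S$: the archimedean and $S$-ramified places contribute factors that (for $n=1$, where $\Psi$ may be taken constant) are absorbed into $\Psi$ and into the Gamma normalization, while at a finite place $v$ with uniformizer $p$ and $\ord_p(m_i)=l_i$, the local Whittaker integral produces the normalized spherical Whittaker function of the unramified principal series of $SO_{2r+1}(F_v)$ evaluated at the torus element indexed by the dominant weight $\lambda=\lambda_p=\sum_i l_i\epsilon_i$ of \eqnref{eqn:lambda}, viewed as a coweight of $SO_{2r+1}$.

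Next I would invoke the Casselman--Shalika formula. It expresses this local Whittaker value as $\delta^{1/2}$ (a monomial in $q_v$) times $\prod_{\alpha\in\Phi^+}\bigl(1-q_v^{-1}z^{\alpha}\bigr)$ times the character $sp_\lambda(z)$ of the irreducible representation of the Langlands dual group $Sp(2r,\mathbb C)$ of highest weight $\lambda$, where $z$ is the Satake parameter at $v$ and $\Phi=C_r$ is the root system of that dual group. Coordinatizing the dual torus as $\operatorname{diag}(x_1,\dots,x_r,x_r^{-1},\dots,x_1^{-1})$, the values of the simple roots $\alpha_1=2\mathbf e_1$, $\alpha_i=\mathbf e_i-\mathbf e_{i-1}$ of \eqnref{Crposroots} on $z$ are $x_1^{2}$ and $x_{i-1}^{-1}x_i$; comparing with \eqnref{satake} and using $B(\alpha_i,\mathbf s-\tfrac12\rho^\vee)=s_i-\tfrac12$, the Satake parameter $z$ is identified with $(x_1,\dots,x_r)$ under precisely the change of variables already in force. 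Since $B(\alpha,\rho^\vee)$ equals the height of $\alpha$, one gets $q_v^{-(1+2B(\alpha,\mathbf s-\frac12\rho^\vee))}=q_v^{-1}z^{\alpha}$ for every $\alpha\in\Phi^+$, so the Casselman--Shalika denominator $\prod_{\alpha\in\Phi^+}(1-q_v^{-1}z^{\alpha})$ is exactly the reciprocal of the local Euler factor at $v$ of $\prod_{\alpha\in\Phi^+}\zeta_\alpha(\mathbf s)$ in \eqnref{zstardefined}, while the $\delta^{1/2}$-monomial together with $sp_\lambda(z)$ becomes the right-hand side $x_1^{L_1-1}\cdots x_r^{L_r-r}\,sp_\lambda(\mathbf x)$ of \eqnref{justaboutdone} (matching the monomial prefactor amounts to checking $L_i-i=l_1+\cdots+l_i$ against the highest-weight normalization of $sp_\lambda$).

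Putting these together, the local factor of $\mathcal W_{\mathbf m}(\mathbf s)$ at $v$ is precisely the $p$-part of $Z_\Psi^{\ast}(\mathbf s;\mathbf m)$ recorded in \eqnref{justaboutdone} (that is, $\sum_{\mathbf k}H^{(1)}(p^{\mathbf k};p^{\mathbf l})\,q_v^{-2k_1s_1-\cdots-2k_rs_r}$ times the local Euler factor of the normalizing zeta product). Since for $n=1$ both $\mathcal W_{\mathbf m}$ and $Z_\Psi^{\ast}(\mathbf s;\mathbf m)$ are honest Euler products over the primes of $\mathcal O_S$ (by the perfect multiplicativity noted in Section~\ref{specializingsection} together with \eqnref{zstardefined}), equality of all local factors gives $Z_\Psi^{\ast}(\mathbf s;\mathbf m)=\mathcal W_{\mathbf m}(\mathbf s)$, proving Theorem~\ref{thm:matchHK}; and since the Langlands--Selberg theory of Eisenstein series (cf.\ \cite{moeglin-wald}) endows this Eisenstein series with meromorphic continuation and functional equations of the shape demanded in part~I, both parts of Conjecture~\ref{ZConj} follow for $n=1$.

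The step I expect to be the real obstacle is the first one: performing the unfolding precisely over $F_S$ in the conventions of \cite{BB:kubota}, \cite{WMD2}, \cite{WMD4}, and checking that the additive character $\psi_{\mathbf m}$, the archimedean and $S$-ramified contributions, and the Gamma factors $\mathbf G_\alpha$ are all accounted for without discrepancy. The Casselman--Shalika input is a black box, and the parameter matching of the third paragraph is bookkeeping already implicit in the surrounding computation; in practice one may shortcut the unfolding by simply noting that \eqnref{justaboutdone} exhibits the prime-power contributions of $Z_\Psi^{\ast}$ as the Casselman--Shalika value, which is by construction the local Whittaker factor of the Borel Eisenstein series, so that Theorem~\ref{thm:matchHK} reduces to the classical ($n=1$) fact that such an Eisenstein series possesses an Euler product with exactly these factors.
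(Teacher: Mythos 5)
Your proposal follows essentially the same route as the paper: for $n=1$ the series is Eulerian, so one compares local factors, identifies the $p$-part of $Z_\Psi^\ast$ via \eqnref{justaboutdone} with the Casselman--Shalika value $\prod_{\alpha\in\Phi^+}(1-q^{-1}\mathbf{t}^{-\alpha^\vee})\,sp_\lambda(\mathbf{t})$ under the Satake identification \eqnref{satake}, and passes from local to global by unfolding the Eisenstein series Whittaker integral. The only cosmetic difference is in handling the $\delta_B^{1/2}$ normalization: you match the monomial $x_1^{L_1-1}\cdots x_r^{L_r-r}$ directly against $\delta^{1/2}$, whereas the paper defers this to a local functional-equation check; both are acceptable bookkeeping.
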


\begin{proof} In the case $n=1$, the multiple Dirichlet series $Z^\ast_\Psi(\mathbf{s};\mathbf{m})$ is Eulerian. Indeed, the power residue symbols used in the definition of twisted multiplicativity in (\ref{cmult}) and (\ref{mmult}) are all trivial. Hence it suffices to check that the Euler factors for $Z^\ast_\Psi$ match those of the corresponding minimal parabolic Eisenstein series at each prime $p \in \mathcal{O}_S$. 

The Euler factors for the minimal parabolic Eisenstein series can be computed using the Casselman-Shalika formula,  Theorem 5.4 in \cite{casselmanshalika}. We briefly recall the form of this expression for a split, reductive group $G$ over a local field $F_v$ with usual Iwasawa decomposition $G = ANK = BK$. Let $\chi$ be an unramified character of the split maximal torus $A$ and consider the induced representation $\text{ind}_B^G(\chi).$ Given an unramified additive character $\psi$ of the unipotent $N^-(F_v)$, opposite the unipotent $N$ of $B$, we have an associated Whittaker functional 
\begin{equation} W_\psi(\phi) = \int_{N^-(F_v)} \phi(\overline{n}) \psi(\overline{n}) d\overline{n} \quad \text{where} \quad \phi(ank) := \chi(a) \delta_B(a)^{1/2} \label{prinserieswhit} \end{equation}
is the normalized spherical vector with $\delta_B$ is the modular quasicharacter.
The associated Whittaker function is given by setting $\mathcal{W}_\phi(g) := W(g\phi)$, and is determined by its value on 
$\pi^{-\lambda}$ for $\lambda \in X_\ast$, the coweight lattice and $\pi$ a uniformizer for $F_v$. Then the Casselman-Shalika formula states that $\mathcal{W}_\phi(\pi^{-\lambda}) = 0$ unless $\lambda$ is dominant, in which case
\begin{equation} \delta_B(\pi^{-\lambda})^{1/2} \mathcal{W}_\phi(\pi^{-\lambda}) = \left ( \prod_{\alpha \in \Phi^+} (1 - q^{-1} \mathbf{t}^{-\alpha^\vee}) \right)  \text{ch}_\lambda(\mathbf{t}) \label{csformula} \end{equation}
where $\text{ch}_\lambda$ is the character of the irreducible representation of the Langlands dual group $G^\vee$ with highest weight $\lambda$ and $\mathbf{t}$ denotes a diagonal representative of the semisimple conjugacy class in $G^\vee$ associated to $\text{ind}_B^G(\chi)$ by Langlands via the Satake isomorphism (see \cite{Borel} for details). In the special case $G = SO(2r+1)$, for relations with the above multiple Dirichlet series, we determine $\mathbf{t} = (x_1, \ldots, x_r)$ according to (\ref{satake}) where $|\pi|_v^{-1} = q$. Since $G^\vee = Sp(2r)$ in this case, the character $\text{ch}_\lambda(\mathbf{t})$ in (\ref{csformula}) is just $sp_\lambda(x_1, \ldots, x_r)$ as in the right-hand side of (\ref{justaboutdone}). Note further that the product over positive roots in (\ref{csformula}) matches the Euler factors for the normalizing zeta factors of $Z^\ast_\Psi$ appearing on the left-hand side of (\ref{justaboutdone}).

While the Casselman-Shalika formula is stated for principal series over a local field, because the global Whittaker coefficient is Eulerian, there is no obstacle to obtaining the analogous global result for $F_S$ from the local result via passage to the adele group. Moreover, the minimal parabolic Eisenstein series Whittaker functional 
$$  \int_{N(\mathbb{A}) / N(F)} E_{\phi}(ng) \psi_{\underline{m}}(n) dn  = \displaystyle \int_{N(\mathbb{A}) / N(F)} \sum_{\gamma \in B(F) \backslash G(F)} \phi(\gamma ng) \psi_{\underline{m}}(n) dn $$
can be shown to match the integral in (\ref{prinserieswhit}) with $\psi = \psi_{\underline{m}}$ by the usual Bruhat decomposition for $G(F)$ and a standard unfolding argument.

Hence according to (\ref{justaboutdone}), the Euler factor for $Z^\ast_\Psi(\mathbf{s}; \mathbf{m})$ matches that of the Fourier-Whittaker coefficient except possibly up to a monomial in the $|p|^{-2s_i}$ with $i=1, \ldots, r$. This disparity arises from the fact that the Whittaker functions in the Casselman-Shalika formula are normalized by the modular quasicharacter $\delta_B^{1/2}$, whereas our multiple Dirichlet series should correspond to unnormalized Whittaker coefficients in accordance with the functional equations $\sigma_i$ as in (\ref{saction}) sending $s_i \mapsto 1-s_i$. Hence, to check that the  right-hand side of (\ref{justaboutdone}) exactly matches the unnormalized Whittaker coefficient of the Eisenstein series, it suffices to verify that $$x_1^{L_1-1} \cdots x_r^{L_r-r} sp_\lambda(x_1, \ldots, x_r)$$ satisfies a local functional equation $\sigma_j$ given in (\ref{saction}) as Dirichlet polynomials in $|p|^{-2s_i}$ for $i=1, \ldots, r$.
\end{proof}

\subsection{Proof of Lemma 4}

\begin{proof}
For part (a) of the lemma, we induct on the rank.  When $r=2$, there are at most six connected components among all the ribbon strips of $S_P$, since $1$ and $\onebar$ may only appear in the top row.  Moreover, since $P$ is strict there must be at least two connected components.  Thus $0 \leq \str(S_P) - 2 \leq 4$.  At each of the four entries in $P$ below the top row. one shows that if the given entry is generic, it increases the count $\str(S_P)$ by $1$.  

Suppose that for a $GT$-pattern of rank $r-1$, each of the $r^2$ entries below the top row increases the count $\str(P)$ by $1$.  Then consider a $GT$-pattern $P$ of rank $r$, and consider the collection of entries $a_{i,j}$, $b_{i,j}$ below the double line.  These entries control the number of connected components consisting of $\onebar$'s, $1$'s, $\ldots \overline{r-1}$'s, and $r-1$'s in~$P$, in precisely the same way as the full collection of entries below the top row in a pattern of rank $r-1$.  Thus inductively, for each generic entry $a_{i,j}$ with $2 \leq i \leq r-1$, $3 \leq j \leq r$ or $b_{i,j}$ with $2 \leq i,j \leq r$, the count $\str(P)$ is increased by~$1$.  Finally, for $i=1$, one easily checks that the value of $\str(S_P)$ is increased by~$1$ for every generic $a_{1,j}$ or $b_{1,j}$.
\bigskip

For (b), we first establish the correct range for $\height(S_P) + \frac{r(r+1)}{2}$.  For each $k$, it is clear that $0 \leq \row_k(S_P) - \con_k(S_P) \leq k-1$ and $0 \leq \row_{\kbar}(S_P) \leq k$.  Combining these inequalities and summing over $k$, we have $0 \leq \height(S_P) + \frac{r(r+1)}{2} \leq r^2$.  We proceed by showing that each of the maximal entries increases the count $\height(S)$ by $1$.  The cases are as follows.

\begin{enumerate}
  \item If $a_{i,j}$ is maximal, then $a_{i,j} = b_{i,j-1}$, hence there are no $\overline{r+1-i}$ entries in row $j-i$ of the tableau.  This decreases $\sum_{k=1}^r \row_{\kbar}(S_P)$ by $1$, hence increasing $\height(S_P)$ by $1$.  
  \item If $b_{i,r}$ is maximal, then $b_{i,r} = 0$, which implies there are no $\overline{r+1-i}$ entries in row $r-i+1$.  This similarly increases $\height(S_P)$ by $1$.  
  \item If $b_{i,j}$ is maximal with $1 \leq j \leq r-1$, then $b_{i,j} = a_{i-1,j+1}$.  Since $P$ is a strict pattern, it must follow that $b_{i,j} < a_{i-1,j}$ and $b_{i,j+1} < a_{i-1,j+1}$.  By these strict inequalities, there are $(r+1-i)$'s in both row $j+1-i$ and row $j+2-i$.  However, by the equality defining $b_{i,j}$ as maximal, the $(r+1-i)$'s in these two rows form one connected component.  (See, for instance, the $\fourbar$ component in the example in Figure 1.)  This decreases $\sum_{k=1}^r \con_k(S_P)$ by $1$, hence increasing $\height(S_P)$ by $1$.        
\end{enumerate}
\end{proof}

\end{document}